\theoremstyle{plain}
\newtheorem{theorem}{Theorem}
\newtheorem{conjecture}[theorem]{Conjecture}
\newtheorem{proposition}[theorem]{Proposition}
\newtheorem{lemma}[theorem]{Lemma}
\newtheorem{corollary}[theorem]{Corollary}
\theoremstyle{definition}
\newtheorem*{definition}{Definition}
\theoremstyle{remark}
\newtheorem{remark}[theorem]{Remark}
\newtheorem{example}[theorem]{Example}
\newtheorem{question}[theorem]{Question}
\newenvironment{parts}[0]{%
  \begin{list}{}%
    {\setlength{\itemindent}{0pt}
     \setlength{\labelwidth}{1.5\parindent}
     \setlength{\labelsep}{.5\parindent}
     \setlength{\leftmargin}{2\parindent}
     \setlength{\itemsep}{0pt}
     }%
   }%
  {\end{list}}
\newcommand{\Part}[1]{\item[\upshape#1]}
\renewcommand{\a}{\alpha}
\renewcommand{\b}{\beta}
\newcommand{\g}{\gamma}
\renewcommand{\d}{\delta}
\newcommand{\e}{\epsilon}
\newcommand{\f}{\phi}
\renewcommand{\l}{\lambda}
\newcommand{\s}{\sigma}
\renewcommand{\t}{\tau}
\newcommand{\z}{\zeta}
\newcommand{\bfmu}{{\boldsymbol\mu}}
\newcommand{\bfxi}{{\boldsymbol\xi}}
\newcommand{\bfzeta}{{\boldsymbol\zeta}}
\newcommand{\F}{\Phi}
\renewcommand{\L}{\Lambda}
\newcommand{\gp}{{\mathfrak{p}}}
\def\Gcal{{\mathcal G}}
\def\Lcal{{\mathcal L}}
\def\Mcal{{\mathcal M}}
\def\Ocal{{\mathcal O}}
\def\Pcal{{\mathcal P}}
\def\Wcal{{\mathcal W}}
\newcommand{\CC}{\mathbb{C}}
\newcommand{\FF}{\mathbb{F}}
\newcommand{\GG}{\mathbb{G}}
\newcommand{\NN}{\mathbb{N}}
\newcommand{\QQ}{\mathbb{Q}}
\newcommand{\RR}{\mathbb{R}}
\newcommand{\TT}{\mathbb{T}}
\newcommand{\ZZ}{\mathbb{Z}}
\def \bfm{{\boldsymbol m}}
\def \bfn{{\boldsymbol n}}
\def \bfz{{\boldsymbol z}}
\def \bfX{{\boldsymbol X}}
\newcommand{\characteristic}{\operatorname{char}}
\newcommand{\dual}{{\scriptscriptstyle\vee}}
\newcommand{\End}{\operatorname{End}}
\newcommand{\Gal}{\operatorname{Gal}}
\newcommand{\Kbar}{{\bar K}}
\newcommand{\LCM}{\operatorname{LCM}}
\newcommand{\Mahler}{{\mathcal M}}  
\newcommand{\Norm}{\operatorname{N}}
\newcommand{\Mob}{\operatorname{\mu}}  
\newcommand{\ord}{\operatorname{ord}}
\newcommand{\Qbar}{{\bar{\QQ}}}
\newcommand{\RA}{\operatorname{\mathcal{R}\!\!\mathcal{A}}} 
\newcommand{\Real}{\operatorname{Re}}
\renewcommand{\setminus}{\smallsetminus}
\newcommand{\Spec}{\operatorname{Spec}}
\newcommand{\tors}{{\textup{tors}}}
\newcommand{\bfzero}{{\boldsymbol0}}
\newcommand{\Zsig}{\operatorname{Zsig}} 
\newcommand{\<}{\langle}
\renewcommand{\>}{\rangle}
\begin{document}

\baselineskip=17pt
\newcommand{\DATE}{\today}


\title[Divisor Divisibility Sequences on Tori]{Divisor Divisibility Sequences on Tori}

\author[J.H. Silverman]{Joseph H. Silverman}
\address{Mathematics Department, Box 1917, Brown University,
Providence, RI 02912 USA}
\email{jhs@math.brown.edu}

\date{}

\begin{abstract}
We define the \emph{Divisor Divisibility Sequence} associated to a
Laurent polynomial \(f\in\mathbb{Z}[X_1^{\pm1},\ldots,X_N^{\pm1}]\) to
be the sequence \(W_n(f)=\prod f(\zeta_1,\ldots,\zeta_N)\), where
\(\zeta_1,\ldots,\zeta_N\) range over all \(n\)'th roots of unity with
\(f(\zeta_1,\ldots,\zeta_N)\ne0\).  More generally, we define
\(W_\Lambda(f)\) analogously for any finite subgroup
$\Lambda\subset(\CC^*)^N$.  We investigate divisibility,
factorization, and growth properties of \(W_\Lambda(f)\) as a function
of \(\Lambda\). In particular, we give the complete factorization of
\(W_\Lambda(f)\) when \(f\) has generic coefficients, and we prove an
analytic estimate showing that the rank-of-apparition sets for
\(W_\Lambda(f)\) are not too large.
\end{abstract}

\subjclass[2010]{Primary: 11B39; Secondary: 14G25, 14L99}
\keywords{divisibility sequence}

\maketitle

\tableofcontents

\newpage

\section{Introduction}
\label{section:introduction}

A classical divisibility sequence is a sequence of (nonzero)
integers $(W_n)_{n\ge1}$ having the property
\begin{equation}
  \label{eqn:clscldiv}
  m\mid n \quad\Longrightarrow\quad W_m\mid W_n.
\end{equation}
Well-known examples of such sequences include $a^n-b^n$, Fibonacci and
Lucas sequences, and elliptic divisibility
sequences. See~\cite{MR1990179} for an overview of the history and
study of divisibility sequences.  These and similar sequences are
associated to multiples of points in one-dimensional algebraic
groups, specifically in (twisted) multiplicative groups or elliptic
curves. They tend to have a number of important properties, such as
those described in Table~\ref{table:dsprops}.

\begin{table}[ht]
\begin{center}
\begin{tabular}{|@{\bfseries\enspace}l|l|} \hline
Divisibility
  &$W_n$ is a divisibility sequence\\ \hline
$\boldsymbol\infty$-Growth
  &$\log|W_n|$ grows like $O(n^d)$ for some $d\ge1$\\ \hline
$\boldsymbol p$-adic Growth
  &$\ord_p(W_n)$ grows regularly (and slowly)\\ \hline
Recursion
  &$W_n$ satisfies a (possibly non-linear) recursion\\ \hline
Zsigmondy
  &Most $W_n$ have a primitive prime divisor\\ \hline
\end{tabular}
\end{center}
\caption{A List of Sequence Properties}
\label{table:dsprops}
\end{table}

It is natural to look for analogous sequences associated to higher
dimensional algebraic groups. An obvious approach (see
Section~\ref{section:history}) yields sequences such as
\begin{equation}
  \label{eqn:Wngcdean1bn1}
  W_n = \gcd(a^n-1,b^n-1)
\end{equation}
for integers~$a$ and~$b$ that are multiplicatively independent
in~$\QQ^*$. Such sequences are quite interesting and lead to deep
theorems and conjectures, for example:
\par\noindent
\textbf{Theorem}.\enspace (Bugeaud, Corvaja, Zannier~\cite{MR1953049})
\[
  \lim_{n\to\infty} \frac{1}{n}\log\gcd(a^n-1,b^n-1)=0.
\]
\par\noindent
\textbf{Conjecture}.\enspace (Ailon, Rudnick~\cite{MR2046966})
\[
  \#\bigl\{n\ge 1 : \gcd(a^n-1,b^n-1)=\gcd(a-1,b-1) \bigr\} = \infty.
\]
\par
In particular, the sequence~\eqref{eqn:Wngcdean1bn1}, which is
associated to the powers of the point~$(a,b)$ in the
group~$\GG_m^2(\QQ)$, fails to have the $\infty$-Growth Property, and
conjecturally fails quite badly.

In this paper we suggest a new way to associate divisibility sequences
to higher dimensional algebraic groups. These sequences have the
Divisibility Property and (conjecturally) the $\infty$-Growth
Property.  For concreteness, in this article we concentrate on the
$N$-dimensional torus $\GG_m^N$. A formulation for more general
algebraic groups is discussed briefly in
Section~\ref{section:ddseqgengp} and will form the content of a
subsequent paper~\cite{DDSpreprint1}.  To define our new sequences, we
replace the point~$(a,b)\in\GG_m^2$ used in~\eqref{eqn:Wngcdean1bn1}
with a divisor in~$\GG_m^N$, or equivalently, with the zero set of a
non-zero Laurent polynomial.

\begin{definition}[Preliminary]
Let~$\bfmu_n\subset\CC^*$ denote the group of $n$'th roots of unity.
The \emph{Divisor Divisibilty Sequence}, or \emph{DD-sequence} for
short, associated to a non-zero Laurent
polynomial~$f\in\ZZ[X_1^{\pm1},\ldots,X_N^{\pm1}]$ is the sequence
\begin{equation}
  \label{eqn:Wnfdef1}
  W_n(f) =
  \prod_{\substack{\z_1,\ldots,\z_N\in\bfmu_n\\ f(\z_1,\ldots,\z_N)\ne0\\}}
  f(\z_1,\ldots,\z_N).
\end{equation}
For example, taking $N=1$ and $f(X)=aX-b$ recovers the classical
divisibility sequence $W_n(aX-b)=a^n-b^n$.
\end{definition}

One easily checks that~$W_n(f)\in\ZZ$ is a divisibility sequence, so
DD-sequences have the Divisibility Property. Further, it is
conjectured (and proven if~$N=1$ or if~$f$ is
``atoral''~\cite{MR3082539})
that~$\log\bigl|W_n(f)\big|\sim n^N\log\Mcal(f)$ as $n\to\infty$,
where~$\Mcal(f)$ is the Mahler measure of~$f$, so DD-sequences
(conjecturally) have the $\infty$-Growth Property.  See
Section~\ref{section:infgrowth} for details.

Computing numerical examples, one quickly notices that higher
dimensional DD-sequences tend to be highly factorizable. We now
explain why, which leads to a generalized definition of the
DD-sequences that are the primary objects of study in this article.

An intrinsic and enlightening way to describe the classical
divisibility property~\eqref{eqn:clscldiv} is to view the positive
integers~$\NN$ as a \emph{partially ordered set} (\emph{poset}),
ordered by divisibility. Then a sequence $W:\NN\to\NN$
satisfies~\eqref{eqn:clscldiv} if and only if it is a morphism of
posets, i.e., a map that preserves the partial ordering. 

Next we observe that a 1-dimensional DD-sequence may be viewed as
assigning an integer~$W_n(f)$ to each finite subgroup~$\bfmu_n$
of~$\CC^*$.  Thus a 1-dimensional DD-sequence may be viewed as a poset
morphism
\[
  \{\text{finite subgroups of $\CC^*$}\} \longrightarrow \NN,
  \quad
  \bfmu_n\longmapsto \bigl|W_n(f)\bigr|,
\]
where we order the subgroups of~$\CC^*$ by inclusion and the elements
of~$\NN$ by divisibility. This suggests that for higher dimensional
DD-sequences, we should define~$W$ to be a function on the set of all
finite subgroups of~$(\CC^*)^N$, rather than restricting attention to
subgroups of the form~$\bfmu_n^N$.

Further, there is no reason to restrict our coefficient ring to
be~$\ZZ$.  Table~\ref{table:notation} sets some notation that will
remain in effect for the rest of this article.  

\begin{table}[ht]
\begin{tabular}{|c@{\quad}l|} \hline
$N$& a positive integer\\[1\jot]
$R$& an integrally closed integral domain\\[1\jot]
$K$& the fraction field of $R$, with fixed separable closure $\Kbar$\\[1\jot] 
$R^{(N)}$& the ring of Laurent polynomials
  $R[X_1^{\pm1},\ldots,X_N^{\pm1}]$\\[1\jot]
$\<\bfzeta\>$& the cyclic subgroup generated by
  $\bfzeta\in\GG_m^N(\Kbar)$\\[1\jot]
$\|\L\|$&the cardinality of a finite subgroup
  $\L\subset\GG_m^N(\Kbar)$ \\[1\jot]
\hline
\end{tabular}
\caption{Notation}
\label{table:notation}
\end{table}

By a slight abuse of
terminology, we view~$R\setminus\{0\}$ as a poset under
divisibility.\footnote{It is really the non-zero ideals that form a
  poset, since $a\mid b$ and $b\mid a$ only imply that $aR=bR$, not
  that $a=b$.} We now define the ``sequences'' that are our primary
object of study.

\begin{definition}
Let~$f\in R^{(N)}$ be a non-zero Laurent polynomial.  The
\emph{Divisor Divisibilty Sequence} (\emph{DD-sequence}) associated
to~$f$ is the map
\[
  W_f : \left\{\begin{tabular}{l}
              finite subgroups\\ of $\GG_m^N(\Kbar)$\\
        \end{tabular}\right\}
   \longrightarrow R, \qquad
  \L\longmapsto \prod_{\substack{\bfzeta\in\L\\ f(\bfzeta)\ne0\\}} f(\bfzeta).
\]
For notational convenience, we may at various times use $W_f(\L)$,
$W_\L(f)$, or $W(f,\L)$ to denote the DD-sequence map, and
when~$\L=\bfmu_n^N$, we may write~$W_n(f)$ or~$W_f(n)$
for~$W_f(\bfmu_n^N)$.
\end{definition}

Our first result provides some justification for this definition.

\begin{proposition}
\label{proposition:Wfdivposet}
Let $f\in R^{(N)}$ be a non-zero Laurent polynomial. 
\begin{parts}
\Part{(a)}
Let $\L\subseteq\GG_m^N(\Kbar)$ be a finite subgroup. 
Then $W_f(\L)\in R$.
\Part{(b)}
The map~$W_f$ is a poset morphism, i.e., 
\[
  \L'\subseteq\L
  \quad\Longrightarrow\quad
  W_f(\L') \bigm| W_f(\L).
\]
\end{parts}
\end{proposition}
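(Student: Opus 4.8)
The plan is to derive both parts from a single structural observation: every finite subgroup $\L\subseteq\GG_m^N(\Kbar)$, and more generally every set of the form $\L\setminus\L'$ with $\L'\subseteq\L$ both finite subgroups, is stable under the natural action of $\Gal(\Kbar/K)$. Granting this, parts (a) and (b) both follow from a short Galois-invariance-plus-integrality argument that uses the hypothesis that $R$ is integrally closed.

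To justify the stability claim, let $m$ be the exponent of $\L$, so that $\L\subseteq\bfmu_m(\Kbar)^N$. The group $\bfmu_m(\Kbar)$ is cyclic, say of order $m'$ dividing $m$, and each $\s\in\Gal(\Kbar/K)$ acts on it by $\z\mapsto\z^a$ for some $a\in(\ZZ/m'\ZZ)^*$. Under the identification $\bfmu_{m'}(\Kbar)^N\cong(\ZZ/m'\ZZ)^N$ this is multiplication by the unit $a$, which carries $\L$ into $\L$; since $\s$ is injective and $\L$ is finite, $\s(\L)=\L$. Hence $\s$ also fixes $\L'$ and permutes $\L\setminus\L'$. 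Moreover, because $f$ has coefficients in $R\subseteq K$, we have $\s\bigl(f(\bfzeta)\bigr)=f(\s\bfzeta)$ for every $\bfzeta$, so the condition ``$f(\bfzeta)\ne0$'' that defines the products below is itself $\Gal(\Kbar/K)$-stable.

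For part (a), applying any $\s\in\Gal(\Kbar/K)$ to $W_f(\L)=\prod_{\bfzeta\in\L,\,f(\bfzeta)\ne0}f(\bfzeta)$ merely permutes the factors, so $W_f(\L)$ is fixed by $\Gal(\Kbar/K)$ and therefore lies in $K$. On the other hand, each coordinate $\z_i$ of each $\bfzeta\in\L$ satisfies $\z_i^m=1$, so $\z_i$ and $\z_i^{-1}=\z_i^{m-1}$ are integral over $R$; since the integral closure of $R$ in $\Kbar$ is a ring, it contains every value $f(\bfzeta)$ and hence the product $W_f(\L)$. Thus $W_f(\L)$ lies in $K$ and is integral over $R$, and since $R$ is integrally closed we conclude $W_f(\L)\in R$.

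For part (b), split the index set along the disjoint union $\L=\L'\sqcup(\L\setminus\L')$ to obtain
\[
  W_f(\L)=W_f(\L')\cdot\prod_{\substack{\bfzeta\in\L\setminus\L'\\ f(\bfzeta)\ne0}}f(\bfzeta).
\]
By the stability of $\L\setminus\L'$ and of the non-vanishing condition, the second factor is again fixed by $\Gal(\Kbar/K)$, hence lies in $K$, and as in part (a) it is integral over $R$, hence lies in $R$; therefore $W_f(\L')\mid W_f(\L)$ in $R$. The only genuinely substantive point in the whole argument is the Galois-stability observation; after that everything is formal. The one thing to keep an eye on is positive characteristic, but it causes no difficulty: the torsion of $\GG_m(\Kbar)$ is automatically prime to $\characteristic K$, so all the roots of unity in sight are separable and the identity $\Kbar^{\Gal(\Kbar/K)}=K$ applies without incident.
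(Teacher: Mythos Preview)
Your proof is correct and follows essentially the same route as the paper's: Galois-invariance of~$\L$ (and of the non-vanishing locus of~$f$) puts~$W_f(\L)$ in~$K$, integrality of roots of unity puts it in the integral closure, and the hypothesis that~$R$ is integrally closed finishes both~(a) and~(b). You supply more detail than the paper does---explicitly verifying that~$\Gal(\Kbar/K)$ acts on~$\L$ via a unit in~$\ZZ/m'\ZZ$, that~$\z_i^{-1}$ is integral, and that positive characteristic causes no separability trouble---but the underlying argument is identical.
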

\begin{proof}
See Section~\ref{section:defsandprops}.
\end{proof}

How large should we expect~$W_f(\L)$ to be as the size of~$\L$
increases? We observe that~$W_f(\L)$ is a product of~$\|\L\|$ factors,
and that the triangle inequality shows that each factor~$f(\bfzeta)$
is bounded, independently of~$\L$. Thus~$\bigl|W_f(\L)\bigr|$ is likely
to grow exponentially in~$\|\L\|$. If we further take a sequence
of subgroups whose points become equidistributed in the torus
\[
  \TT^N := \bigl\{\bfz=(z_1,\ldots,z_N)\in\CC^N : |z_1|=\cdots=|z_N|=1\bigr\},
\]
then it is natural to compare the growth rate of~$W_f(\L)$ to the
\emph{Mahler measure of~$f$}, which we recall is the quantity
\[
  \Mahler(f) = \exp\left(\int_{\TT^N} \log\bigl|f(z_1,\cdots,z_N)\bigr| \, 
    \frac{dz_1}{z_1} \cdots \frac{dz_N}{z_N}\right);
\]
see~\cite{MR707608,MR1770638}.  For ease of exposition here, we state
the growth conjecture only for the groups~$\L=\bfmu_n^N$; see
Section~\ref{section:infgrowth} for a general formulation.

\begin{conjecture}
\label{conjecture:limMah}
Let $f\in\Qbar^{(N)}\subset\CC^{(N)}$ be a non-zero Laurent polynomial
with algebraic coefficients. Then
\[
  \lim_{n\to\infty} \bigl|W_n(f)\bigr|^{1/n^N} = \Mahler(f).
\]
\end{conjecture}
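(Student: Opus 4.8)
The plan is to take logarithms and recognize the claim as a Riemann-sum statement. Since
\[
  \frac{1}{n^N}\log\bigl|W_n(f)\bigr|
  = \frac{1}{n^N}\sum_{\substack{\bfzeta\in\bfmu_n^N\\ f(\bfzeta)\ne0}}\log\bigl|f(\bfzeta)\bigr|,
\]
one must show the right-hand side converges to $\log\Mahler(f)=\int_{\TT^N}\log|f|\,d\mu$, where $\mu$ is Haar measure on $\TT^N$. All the difficulty comes from the logarithmic singularities of $\log|f|$ along the real-analytic set $Z(f)\cap\TT^N$; away from it everything is tame. Writing $\log|f|=\log^+|f|-\log^-|f|$, the function $\log^+|f|$ is bounded and continuous on $\TT^N$, and the points of $\bfmu_n^N$ equidistribute toward $\mu$, so $n^{-N}\sum_{\bfzeta\in\bfmu_n^N}\log^+|f(\bfzeta)|\to\int_{\TT^N}\log^+|f|\,d\mu$ with no further input. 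The whole problem therefore reduces to estimating $n^{-N}\sum_{\bfzeta:\,f(\bfzeta)\ne0}\log^-|f(\bfzeta)|$.

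Two lower-order contributions can be dispatched quickly. The torsion points with $f(\bfzeta)=0$, which are deleted from the product, are negligible: clearing denominators and applying the Schwartz--Zippel bound (or Laurent's theorem on torsion points of subvarieties of $\GG_m^N$) shows $\#\{\bfzeta\in\bfmu_n^N:f(\bfzeta)=0\}=O(n^{N-1})$. Next, fix a small $\delta>0$ and split the remaining points according to whether $\operatorname{dist}(\bfzeta,Z(f)\cap\TT^N)$ is $\ge\delta$ or $<\delta$. On the ``far'' part, $\log|f|$ is continuous, so equidistribution gives $n^{-N}\sum_{\text{far}}\log|f(\bfzeta)|\to\int_{\operatorname{dist}\ge\delta}\log|f|\,d\mu$, which tends to $\log\Mahler(f)$ as $\delta\to0$ because $\log|f|\in L^1(\mu)$. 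It remains to bound the ``near'' part; here the naive bound, obtained by multiplying the number of near points by the largest attained value of $\log^-|f|$, is \emph{not} sufficient, so one organizes the near points into dyadic shells $2^{-j-1}\delta\le\operatorname{dist}(\bfzeta,Z(f))<2^{-j}\delta$, bounds the $\mu$-measure of each shell (comparable to its width, since $Z(f)\cap\TT^N$ has positive real codimension) via a discrepancy estimate for $\bfmu_n^N$, and controls $\log^-|f(\bfzeta)|$ on each shell.

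The crux, and the step I expect to be the genuine obstacle, is the Diophantine input: a root of unity could lie super-exponentially close to $Z(f)$, making a single term $\log^-|f(\bfzeta)|$ comparable to a positive power of $n$ and so swamping the $n^{-N}$-normalized sum. Excluding this requires a Liouville-type lower bound, roughly $\log|f(\bfzeta)|\gg-\log n$ for $\bfzeta$ of order $n$ with $f(\bfzeta)\ne0$. When $f$ is \emph{atoral}---for instance when $\nabla f$ has no zero on $Z(f)\cap\TT^N$, so that $|f|\asymp\operatorname{dist}(\,\cdot\,,Z(f))$ near the zero set---such a bound follows by linearizing and invoking lower bounds for linear forms in logarithms of algebraic numbers (Baker), or equivalently height lower bounds for torsion points near subvarieties of tori (Bombieri--Zannier, Amoroso--David); this forces $\operatorname{dist}(\bfzeta,Z(f))\ge n^{-C}$, truncates the dyadic sum at $j=O(\log n)$, and, together with the shell estimates above, finishes the argument---recovering the known atoral case (cf.~\cite{MR3082539}) and, for $N=1$, the classical evaluation of $W_n(f)$ via resultants and Jensen's formula. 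In the general \emph{toral} case, $Z(f)\cap\TT^N$ may contain a translate of a subtorus or be tangent to $\TT^N$ to high order, no unconditional estimate of the required strength is known, and this is precisely why Conjecture~\ref{conjecture:limMah} is open. The same obstruction appears in the Lind--Schmidt--Ward analysis of periodic-point growth for algebraic $\ZZ^N$-actions---here $\bigl|W_n(f)\bigr|$ essentially counts the points of period $n$ in the action dual to $\ZZ[X_1^{\pm1},\ldots,X_N^{\pm1}]/(f)$, whose topological entropy is $\log\Mahler(f)$---and is closely tied to Lehmer's problem on small Mahler measures.
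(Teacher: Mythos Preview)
Your write-up is not a proof but a correct diagnosis: you explicitly acknowledge near the end that the toral case is open, and indeed the paper does not prove Conjecture~\ref{conjecture:limMah} either. Theorem~\ref{theorem:infgrowth} establishes only the cases $N=1$ (via Gel'fond's lower bound $|\b^n-1|\ge C(\b,\e)2^{-\e n}$) and $N\ge2$ atoral (by citing Lind--Schmidt--Verbitskiy~\cite{MR3082539}), which is precisely the territory your outline reaches. Your Riemann-sum framing, the reduction to controlling $\log^-|f(\bfzeta)|$ near $Z(f)\cap\TT^N$, and your identification of the missing Diophantine repulsion bound in the toral case all align with the paper's discussion; Remark~\ref{remark:cmplxcoef} makes the same point you do, that the conjecture fails over~$\CC$ and so any proof must use arithmetic input.

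One caveat on your atoral sketch: the claim that Baker-type bounds yield $\operatorname{dist}(\bfzeta,Z(f))\ge n^{-C}$ after ``linearizing'' is more heuristic than the actual argument in~\cite{MR3082539}, which proceeds via homoclinic points and uses atorality to construct a summable fundamental solution rather than to reduce to a one-variable linear-forms problem. Your diagnosis of \emph{where} the difficulty lies is right, but the mechanism you describe for dispatching the atoral case is not quite the one in the cited literature.

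Finally, note the paper's Addendum: Habegger~\cite{ArXiV:1608.04547v1} and Dimitrov~\cite{Dimitrov2016} have since proven Conjecture~\ref{conjecture:limMah} by distinct methods, though neither settles the more general Conjecture~\ref{conjecture:growth}. So the conjecture is now a theorem, but not via the shell-plus-Baker route you outline.
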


\begin{theorem}
\label{theorem:infgrowth}
Let $f\in\Qbar^{(N)}\subset\CC^{(N)}$ be a non-zero Laurent polynomial
with algebraic coefficients. 
\begin{parts}
\Part{(a)}
Conjecture~\textup{\ref{conjecture:limMah}} is true if
$N=1$.
\Part{(b)}
Conjecture~\textup{\ref{conjecture:limMah}} is true if~$N\ge2$ and~$f$
is \emph{atoral}, which may be defined by the property that the
intersection of the zero locus~$\{f=0\}$ and the torus~$\TT^N$
satisfies\footnote{The general definition is that an algebraic set
  $X\subseteq\CC^N$ is \emph{atoral} if there exists a non-zero
  regular function~$f$ on~$X$ that vanishes identically on
  $X\cap\TT^N$. And a polynomial~$f$ is atoral if the hypersurface
  $f=0$ is atoral. See~\cite{MR2271943}.}
\begin{equation}
  \label{eqn:atoral}
  \dim \bigl\{ \bfz\in\TT^N : f(\bfz)=0 \bigr\} \le N-2,
\end{equation}
where dim is the dimension as a real analytic subvariety of~$\TT^N$.
\end{parts}
\end{theorem}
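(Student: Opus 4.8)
The plan is to treat the two parts by rather different means, since part~(a) is essentially classical while part~(b) requires the genuine multidimensional input.

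For part~(a), when $N=1$ a non-zero Laurent polynomial factors as $f(X)=cX^{-m}\prod_{i}(X-\alpha_i)$ with $c\in\Qbar^*$ and $\alpha_i\in\Qbar^*$. Since multiplying by a monomial $X^{-m}$ only contributes roots of unity as values, it does not affect the limit, so I may assume $f(X)=c\prod_i(X-\alpha_i)$. Then $W_n(f)=c^{\#\{\zeta\in\bfmu_n:f(\zeta)\ne0\}}\prod_i\prod_{\zeta\in\bfmu_n,\ \zeta\ne\alpha_i}(\zeta-\alpha_i)$, and for each fixed $\alpha_i$ the inner product over $\zeta\in\bfmu_n$ equals (up to removing at most one factor when $\alpha_i\in\bfmu_n$) the resultant $\Resultant(X^n-1,X-\alpha_i)=\pm(\alpha_i^n-1)$. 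Hence $|W_n(f)|^{1/n}$ is, up to factors tending to $1$, equal to $|c|\cdot\prod_i|\alpha_i^n-1|^{1/n}$, and $|\alpha_i^n-1|^{1/n}\to\max(1,|\alpha_i|)$. Therefore $\lim|W_n(f)|^{1/n}=|c|\prod_i\max(1,|\alpha_i|)=\Mahler(f)$ by Jensen's formula, which is exactly the claim.

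For part~(b), the strategy is to compare the logarithmic sum $\frac{1}{n^N}\log|W_n(f)|=\frac{1}{n^N}\sum_{\bfzeta\in\bfmu_n^N,\ f(\bfzeta)\ne0}\log|f(\bfzeta)|$ to the integral $\int_{\TT^N}\log|f|\,\frac{dz_1}{z_1}\cdots\frac{dz_N}{z_N}=\log\Mahler(f)$, i.e., to show the Riemann-type sums converge. The function $\log|f|$ is continuous away from $\{f=0\}\cap\TT^N$ but has logarithmic singularities along it. The points $\bfzeta\in\bfmu_n^N$ equidistribute with respect to Haar measure on $\TT^N$, so for a continuous bounded function the convergence of the averages is immediate; the issue is the singular set. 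The atorality hypothesis~\eqref{eqn:atoral} guarantees $\dim(\{f=0\}\cap\TT^N)\le N-2$, which has real codimension at least $2$; this is precisely the condition under which $\log|f|$ is in $L^1(\TT^N)$ with a singularity mild enough that (i) the integral defining $\Mahler(f)$ is finite and (ii) one can quantitatively control how many lattice points $\bfzeta\in\bfmu_n^N$ lie within distance $\varepsilon$ of the singular locus, and how much those points contribute. Concretely, I would fix $\varepsilon>0$, split $\bfmu_n^N$ into points at distance $>\varepsilon$ from $\{f=0\}\cap\TT^N$ (where $\log|f|$ is uniformly continuous, so standard equidistribution handles the sum) and points within $\varepsilon$; for the latter I would use a covering of the codimension-$\ge2$ set by $O(\varepsilon^{-(N-2)})$ balls of radius $\varepsilon$, bound the number of $n$-torsion points in each by $O((\varepsilon n)^{N-\text{(something)}})$, and bound each $|\log|f(\bfzeta)||$ using a \L{}ojasiewicz-type estimate $|f(\bfzeta)|\gg \text{dist}(\bfzeta,\{f=0\})^{k}$ together with the fact that distinct $n$-torsion points are separated by $\gg 1/n$, so the near-singular contribution to $\frac{1}{n^N}\sum$ is $O(\varepsilon^{\delta}\log(1/\varepsilon))$ for some $\delta>0$, uniformly in $n$. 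Letting $n\to\infty$ and then $\varepsilon\to0$ yields the result.

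The main obstacle is the near-singular estimate in part~(b): one must show that the $n$-torsion points cannot cluster near $\{f=0\}\cap\TT^N$ badly enough to contribute to the limit. This is where the codimension bound~\eqref{eqn:atoral} is indispensable — in real codimension $1$ (the toral case), a positive proportion of torsion points can sit $O(1/n)$-close to the singular curve and the logarithmic singularity is no longer integrable against the counting measure at the right rate, which is exactly why the conjecture is open there. I expect the clean way to organize the argument is to invoke the existing literature on Mahler measures of atoral polynomials (e.g.\ the dynamical / homoclinic-point results referenced via~\cite{MR2271943,MR3082539}), which already packages the equidistribution-plus-integrability estimate; the remaining work is bookkeeping to pass from those statements to the precise form $\lim|W_n(f)|^{1/n^N}=\Mahler(f)$, in particular checking that the discarded factors with $f(\bfzeta)=0$ number only $o(n^N)$ (indeed $O(n^{N-1})$, since $\{f=0\}$ is a hypersurface) so they do not affect the exponential rate.
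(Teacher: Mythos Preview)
Your part~(a) has a genuine gap at the crucial step. You assert without comment that $|\alpha_i^n-1|^{1/n}\to\max(1,|\alpha_i|)$, but this is \emph{not} an analytic triviality when $|\alpha_i|=1$ and $\alpha_i$ is not a root of unity: for a transcendental $\alpha$ on the unit circle (say $\alpha=e^{2\pi i\beta}$ with $\beta$ Liouville) one has $|\alpha^{n_j}-1|^{1/n_j}\to 0$ along a subsequence, so the claim is false in general. What is needed is precisely the Diophantine input that $|\alpha^n-1|$ is not exponentially small when $\alpha\in\Qbar^*$ is not a root of unity. The paper isolates this as the key step and invokes Gelfond's theorem ($|\alpha^n-1|\ge C(\alpha,\epsilon)2^{-\epsilon n}$); without it, your argument ``proves'' the result for arbitrary complex coefficients, where it is false (cf.\ Remark~\ref{remark:cmplxcoef}).

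Your part~(b) inherits the same defect in the direct sketch. The \L{}ojasiewicz bound $|f(\bfzeta)|\gg\operatorname{dist}(\bfzeta,\{f=0\})^k$ combined with ``$n$-torsion points are separated by $\gg 1/n$'' does \emph{not} prevent a single torsion point from sitting, say, $2^{-n^3}$-close to the zero locus and contributing $\asymp n^3$ to $\sum\log|f(\bfzeta)|$; the separation is from other torsion points, not from $\{f=0\}$. Remark~\ref{remark:cmplxcoef} gives an explicit atoral $f$ with transcendental coefficients for which exactly this happens and the limit fails, so your purely analytic outline cannot succeed as written. Your eventual deferral to Lind--Schmidt--Verbitskiy~\cite{MR3082539} is correct and is exactly what the paper does; but you should drop the preceding heuristic, since it suggests an approach that is demonstrably insufficient.
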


The proof of Theorem~\ref{theorem:infgrowth}(a), which we sketch in
Section~\ref{section:infgrowth}, uses a strong estimate for linear
forms in logarithms. Theorem~\ref{theorem:infgrowth}(b), which is due
to Lind, Schmidt, and Verbitskiy~\cite{MR3082539}, applies to ``almost
all''~$f$, since generically for~$N\ge2$, the intersection
$\{f=0\}\cap\TT^N$ has real codimension at least~$2$ in~$\TT^N$.  We
also mention that Conjecture~\ref{conjecture:limMah} is false if~$f$
is allowed to have arbitrary complex coefficients, so any proof will
necessarily require an arithmetic argument; see
Remark~\ref{remark:cmplxcoef} for details.

As noted earlier, DD-sequences tend to be highly factorizable. This is
true even in the classical 1-dimensional setting, since if~$n$ is
highly composite, then~$W_n$ has many factors coming from~$W_m$
for~$m\mid n$. Classically, one generically factors~$W_n$
as~$W_n=\prod_{m\mid n}V_m$, where the factors are defined either
using the M\"obius $\mu$-function or using primitive $n$'th roots.
For ease of exposition, we take the latter approach here, but see
Section~\ref{section:genericfactorization} for both approaches and a
proof of their equivalence.

\begin{proposition}
\label{proposition:WfVffactor}
For each finite subgroup~$\L\subset\GG_m^N(\Kbar)$, let
\[
  V_f(\L) = \prod_{\substack{\text{$\bfzeta$ such that $\<\bfzeta\>=\L$}\\
                 \text{and $f(\bfzeta)\ne0$}\\}}  f(\bfzeta),
\]
so in particular, if~$\L$ is not cyclic, then~$V_f(\L)=1$. Then
\[
  V_f(\L)\in R
  \qquad\text{and}\qquad
  W_f(\L) = \prod_{\L'\subseteq\L} V_f(\L'),
\]
where the product is over all subgroups~$\L'$ of~$\L$.  
\end{proposition}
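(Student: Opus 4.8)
\emph{Overall plan.} I would prove the factorization identity first, since it is purely a matter of reindexing, and then establish $V_f(\L)\in R$ by combining a Galois-invariance argument with the hypothesis that $R$ is integrally closed in $K$.

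\emph{The factorization identity.} Each $\bfzeta\in\L$ generates a cyclic subgroup $\<\bfzeta\>$, which is automatically a subgroup of $\L$, and the map $\bfzeta\mapsto\<\bfzeta\>$ sorts the elements of $\L$ into classes indexed by the subgroups $\L'\subseteq\L$ (only the cyclic $\L'$ producing a nonempty class). Intersecting with the locus $f\ne0$ gives a disjoint decomposition
\[
  \{\bfzeta\in\L : f(\bfzeta)\ne0\}
  = \bigsqcup_{\L'\subseteq\L}
    \bigl\{\bfzeta\in\L : \<\bfzeta\>=\L'\ \text{and}\ f(\bfzeta)\ne0\bigr\}
\]
of the index set of the product defining $W_f(\L)$. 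Grouping factors of $W_f(\L)$ according to this decomposition yields $W_f(\L)=\prod_{\L'\subseteq\L}V_f(\L')$, using the convention $V_f(\L')=1$ for noncyclic $\L'$ (empty product). This step is routine.

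\emph{Integrality of $V_f(\L)$.} If $\L$ is noncyclic then $V_f(\L)=1\in R$, so assume $\L$ is cyclic with $\|\L\|=n$. Every coordinate of every element of $\L$ is an $n$th root of unity, so $\L\subseteq\bfmu_n^N$, and the cyclotomic character shows that each $\sigma\in\Gal(\Kbar/K)$ acts on $\bfmu_n$ as $\zeta\mapsto\zeta^{c_\sigma}$ for a single $c_\sigma\in(\ZZ/n\ZZ)^*$, hence acts on $\L$ by $\bfzeta\mapsto\bfzeta^{c_\sigma}$. Since $\gcd(c_\sigma,n)=1$ this is an automorphism of $\L$ preserving the order of every element, so it permutes the set of generators $\{\bfzeta\in\L:\<\bfzeta\>=\L\}$; and because $f$ has coefficients in $R\subseteq K$ and $\sigma$ fixes $K$, we have $\sigma\bigl(f(\bfzeta)\bigr)=f(\bfzeta^{c_\sigma})$ for all $\bfzeta$, so $\sigma$ also permutes the subset $\{\bfzeta\in\L:\<\bfzeta\>=\L,\ f(\bfzeta)\ne0\}$ indexing $V_f(\L)$. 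Reindexing that product gives $\sigma\bigl(V_f(\L)\bigr)=V_f(\L)$. As this holds for all $\sigma\in\Gal(\Kbar/K)$ and $\Kbar$ is a separable closure of $K$ (all roots of unity are separable over $K$, so $\L\subseteq\GG_m^N(\Kbar)$ as assumed), it follows that $V_f(\L)\in K$. Finally, every coordinate of every $\bfzeta\in\L$ is a root of unity, hence integral over $\ZZ$ and a fortiori over $R$; since $f$ has coefficients in $R$, each $f(\bfzeta)$, and therefore the product $V_f(\L)$, is integral over $R$. Because $R$ is integrally closed in $K$, we conclude $V_f(\L)\in R$. (Together with the factorization identity this reproves $W_f(\L)\in R$, cf.\ Proposition~\ref{proposition:Wfdivposet}(a).)

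\emph{Main obstacle.} The only delicate point is the Galois bookkeeping in the second step: one must verify not merely that $\sigma$ stabilizes $\L$, but that it permutes the \emph{generators} of $\L$ and \emph{commutes with evaluation of} $f$, so that exactly the same collection of vanishing factors is discarded from the product before and after applying $\sigma$. Once this is in place, the reduction to cyclic $\L$ and the integral closedness of $R$ make the rest formal.
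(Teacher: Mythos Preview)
Your proof is correct, but it follows a somewhat different route from the paper's. The paper (Theorem~\ref{theorem:WfprodVf}) introduces the M\"obius function on the poset of finite subgroups and first proves the identity
\[
  V_f(\L) = \prod_{\L'\subseteq\L} W_f(\L')^{\Mob(\L,\L')},
\]
then uses this formula to conclude that $V_f(\L)\in K$ (since each $W_f(\L')\in R$ by Proposition~\ref{proposition:Wfdivposet}(a)), combines with integrality exactly as you do to get $V_f(\L)\in R$, and finally obtains the factorization $W_f(\L)=\prod_{\L'\subseteq\L}V_f(\L')$ by M\"obius inversion. You instead derive the factorization first by the direct partition of $\L$ according to $\bfzeta\mapsto\<\bfzeta\>$, and you prove $V_f(\L)\in K$ by a direct Galois-invariance argument on the set of generators, without passing through the M\"obius formula or Proposition~\ref{proposition:Wfdivposet}(a).

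Both approaches are short. Yours is more self-contained and avoids the poset-M\"obius machinery entirely; the paper's has the advantage of yielding the explicit M\"obius expression for $V_f(\L)$ in terms of the $W_f(\L')$, which is of independent interest. Your Galois bookkeeping is essentially the same argument the paper uses for Proposition~\ref{proposition:Wfdivposet}(a) and Lemma~\ref{lemma:cycsubx}, applied to the subset of generators rather than all of $\L$; the point you flag as the ``main obstacle'' is handled correctly.
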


Proposition~\ref{proposition:WfVffactor} gives a generic factorization
of~$W_f(\L)$, but it turns out that~$V_f(\L)$ may generically factor
further, depending on~$\L$ and the non-zero monomials appearing
in~$f$. The main result of Section~\ref{section:genericfactorization}
is Theorem~\ref{theorem:Cfmirred}, which describes the complete
factorization of~$W_f(\L)$ when~$f$ is generic over~$\QQ$ for a
prescribed pattern of non-zero monomials. For such~$f$, we further
show that~$W_f$ is a so-called strong divisibility sequence in the
sense that there is an equality of ideals 
\[
  \gcd\bigl(W_f(\L_1),W_f(\L_2)\bigr) = W_f(\L_1\cap\L_2).
\]

We next consider $\gp$-divisibility and~$\gp$-adic behavior of the 
terms in a DD-sequence. This prompts several definitions, which are
generalizations of the classical 1-dimensional case.

\begin{definition}
Let $f\in R^{(N)}$ be a non-zero Laurent polynomial, let~$\gp$ be a
prime ideal of~$R$, and let~$\L\subset\GG_m^N(\Kbar)$ be a finite
subgroup.  Suppose that
\[
   W_f(\L)\in\gp
  \quad\text{and}\quad
   W_f(\L')\notin\gp  \quad\text{for all}\quad \L'\subsetneq\L.
\]
Then we say that~$\gp$ is a \emph{primitive prime divisor
  of~$W_f(\L)$} and that~$\L$ is a \emph{rank of apparition
  for~$\gp$}.  We denote the set of ranks of apparition for~$\gp$ by
\[
  \RA_f(\gp) = \{\L : \text{$\L$ is a rank of apparition for $\gp$}  \}.
\]
It is not hard to prove (Proposition~\ref{proposition:RAfpcyclic})
that~$\RA_f(\gp)$ consists entirely of cyclic groups, so we define the
\emph{Zsigmondy set} of the DD-sequence~$W_f$ to be the set
\[
  \Zsig(f) = \bigl\{\text{cyclic $\L$} : 
              \text{$W_f(\L)$ has no primitive prime divisors} \bigr\}.
\]
\end{definition}

When~$N=1$, it is not hard to show that~$W_f$ has a unique rank
of apparition at~$\gp$, i.e., there is an integer~$r_\gp\ge1$ with the
property that
\[
  \gp\mid W_n(f) \quad\Longleftrightarrow\quad r_\gp\mid n.
\]
But when~$N\ge2$, the set~$\RA_f(\gp)$ may be infinite. The following
analytic result, which is the main theorem of
Section~\ref{section:infgrowth}, shows in particular that~$\RA_f(\gp)$
cannot be too large.  Again, for ease of exposition, we restrict here
to~$R=\ZZ$.

\begin{theorem}
\label{theorem:romrap}
Let $f\in\ZZ^{(N)}$ be a non-zero Laurent polynomial. There is a
constant~$C_f$ so that for all~$\e>0$ we have
\[
  \sum_{p~\text{prime}} \frac{\log p}{p}\sum_{\L\in\RA_f(p)} \frac{1}{\|\L\|^\e}
  \le (N+1)\e^{-1} + C_f.
\]
\end{theorem}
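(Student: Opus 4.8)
The plan is to linearize the weight $\|\L\|^{-\e}$ by an elementary integral identity and thereby reduce to a pointwise estimate on a counting function. Writing $\|\L\|^{-\e}=\e\int_{\|\L\|}^{\infty}t^{-1-\e}\,dt$ and interchanging the sums over $p$ and $\L$ with the integral (Tonelli, since every term is non‑negative), the left‑hand side of the theorem equals $\e\int_{1}^{\infty}t^{-1-\e}\,\Psi_f(t)\,dt$, where
\[
  \Psi_f(t):=\sum_{p}\frac{\log p}{p}\,\#\bigl\{\L\in\RA_f(p):\|\L\|\le t\bigr\}.
\]
Because $\e\int_{1}^{\infty}t^{-1-\e}\log t\,dt=\e^{-1}$ and $\e\int_{1}^{\infty}t^{-1-\e}\,dt=1$, the theorem is reduced to proving the pointwise bound
\[
  \Psi_f(t)\ \le\ (N+1)\log t+C_f\qquad\text{for all }t\ge 1,
\]
which is the real content. (The inner count is finite and is empty once $p$ is large: if $\L\in\RA_f(p)$ then $p\mid W_f(\L)$, and $W_f(\L)$ is a product of at most $\|\L\|$ numbers of absolute value $\le\|f\|_1$, the sum of the absolute values of the coefficients of $f$, so $p\le\|f\|_1^{\,\|\L\|}$ and hence $\|\L\|\ge(\log p)/\log\|f\|_1$; thus at level $t$ only primes $p\le\|f\|_1^{\,t}$ occur in $\Psi_f(t)$, the monomial case $\|f\|_1=1$ being trivial since then $W_f\equiv\pm1$.)

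For this bound the easy ingredients are divisibility and size. If $\L\in\RA_f(p)$ with $\|\L\|=n\le t$, then $\L\subseteq\bfmu_n^N$, so Proposition~\ref{proposition:Wfdivposet}(b) gives $W_f(\L)\mid W_n(f)$; hence every prime occurring in $\Psi_f(t)$ divides the integer $M_t:=\prod_{n\le t}W_n(f)$. The triangle‑inequality bound $|f(\bfzeta)|\le\|f\|_1$ on the unit torus gives $\log|W_n(f)|\le n^N\log\|f\|_1$, so $\log|M_t|\le\log\|f\|_1\sum_{n\le t}n^N=O_f(t^{N+1})$. Combining this with the classical Mertens‑type inequality $\sum_{p\mid m}(\log p)/p\le\log\log m+O(1)$ (valid for all integers $m\ge 3$) yields $\sum_{p\mid M_t}(\log p)/p\le\log\log|M_t|+O(1)\le(N+1)\log t+C_f$. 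Thus, if a prime could be a primitive prime divisor of $W_f(\L)$ for at most one $\L$ with $\|\L\|\le t$, the estimate for $\Psi_f$ would follow at once; for $N=1$ this is so (each $\RA_f(p)$ is a singleton), but for $N\ge 2$ it fails, and the genuine work is to control the multiplicity $\#\{\L\in\RA_f(p):\|\L\|\le t\}$.

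To do this I would reduce $f$ modulo $p$ and argue in $\GG_m^N$ over an algebraic closure of $\FF_p$. When $p\nmid\|\L\|$, reduction at a prime above $p$ embeds $\L$ faithfully, and the minimality in the definition of a rank of apparition becomes the geometric statement that the reduced group $\bar\L$ meets the hypersurface $\{\bar f=0\}$ only in generators of $\bar\L$; moreover the groups $\bar\L$ arising from distinct elements of $\RA_f(p)$ are pairwise incomparable, and each carries a point of $\{\bar f=0\}$ lifting to a point where $f$ does not vanish in characteristic $0$. One then bounds the number of such configurations of order $\le t$ by a Bézout/Schwartz–Zippel estimate for $\#(\{\bar f=0\}\cap\bfmu_n^N)$ (controlled by the Newton polytope of $f$), combined with Laurent's theorem, which guarantees that $\{f=0\}$ contains only finitely many torsion points and hence that only boundedly many reductions are ``spurious,'' and with the range restriction $\|\L\|\ge(\log p)/\log\|f\|_1$ noted above. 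The case $p\mid\|\L\|$ is handled separately and is easier: reduction collapses the $p$‑part of $\L$ to the identity, so such an $\L$ fails to be minimal unless $f$ vanishes at one of the finitely many torsion points on $\{f=0\}$.

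The hard part — and the only step that is not routine — will be the uniformity of this multiplicity estimate: one needs
\[
  \sum_{p}\frac{\log p}{p}\Bigl(\#\{\L\in\RA_f(p):\|\L\|\le t\}-1\Bigr)^{+}=O_f(1)
\]
uniformly in $t$, so that the excess over one rank of apparition per prime does not degrade the sharp constant $N+1$. Establishing this forces one to marry the incomparability of the reductions $\bar\L$ with the point count on $\{\bar f=0\}$ in a $p$‑uniform fashion; everything else (the integral transform, the divisibility into $M_t$, the size estimate for $W_n(f)$, and the prime‑sum inequality) is standard.
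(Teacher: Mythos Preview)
Your reduction is the same as the paper's: the integral identity is Abel summation in disguise, your $\Psi_f(t)$ is exactly the paper's $D_f(t):=\sum_{\|\L\|\le t}\sum_{p:\L\in\RA_f(p)}(\log p)/p$, and both arguments hinge on the pointwise bound $\Psi_f(t)\le(N+1)\log t+C_f$ together with the same easy ingredients (divisibility into a product of the $W_f(\L)$ with $\|\L\|\le t$, the size estimate $O_f(t^{N+1})$ for its logarithm, and the Mertens-type inequality $\sum_{p\mid m}(\log p)/p\le\log\log m+O(1)$). You are right that these ingredients prove the bound only once one controls the multiplicity $\#\{\L\in\RA_f(p):\|\L\|\le t\}$, and here you are more careful than the paper: the paper's displayed computation of $D_f(x)$ passes from $\sum_{\|\L\|\le x}\sum_{p:\L\in\RA_f(p)}(\log p)/p$ to $\sum_{p:\,\exists\,\L\in\RA_f(p),\,\|\L\|\le x}(\log p)/p$ with an equality sign, silently collapsing the multiplicity to~$1$. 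For $N\ge2$ that step is not an equality (e.g.\ with $f=X+Y-3$ both $\langle(-1,1)\rangle$ and $\langle(1,-1)\rangle$ lie in $\RA_f(3)$; the paper itself remarks elsewhere that $\RA_f(p)$ can be infinite). So you have correctly located the crux, which the published argument does not address.

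Your proposed repair, however, is only a programme. The tools you invoke---incomparability of the reductions $\bar\L$, a B\'ezout or Schwartz--Zippel count on $\{\bar f=0\}\cap\bfmu_n^N$, and Laurent's theorem---give at best polynomial-in-$t$ bounds on the number of small ranks of apparition, not the $O_f(1)$ uniformity your displayed estimate demands; Laurent controls characteristic-zero torsion on $\{f=0\}$, not the mod-$p$ picture uniformly in $p$, and a Schwartz--Zippel count of order $n^{N-1}$ per level $n$ sums to something of order $t^N$. With those ingredients you might reach $\Psi_f(t)\le c_f\log t$ for some unspecified $c_f$, hence a conclusion of the shape $c_f\,\e^{-1}+C_f$, but not the stated $(N+1)\e^{-1}$. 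As things stand, neither your sketch nor the paper's argument delivers the theorem with the sharp constant; filling this gap requires either a genuine multiplicity estimate or an approach that bypasses it.
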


One consequence is the fact that the (cyclic) groups in~$\RA_f(p)$ are
comparatively sparse, since for example the series
$\sum_{\text{$\L$ cyclic}} \|\L\|^s$ diverges for~$\Real(s)<N$, while
Theorem~\ref{theorem:romrap} says that if we restrict
to~$\L\in\RA_f(p)$, then the sum converges for~$\Real(s)>0$.
Theorem~\ref{theorem:romrap} also implies that for any~$\theta>0$, the
(upper logarithmic) Dirichlet density of the set
\[
  \bigl\{ p : 
  \text{$\RA_f(p)$ contains a $\L$ with $\|\L\|<p^\theta$} \bigr\}
\]
is at most~$(N+1)\theta$.  See Section~\ref{section:infgrowth} for
details.

A much-studied classical problem is that of perfect powers, and more
recently, powerful numbers, in divisibility sequences. For example, it
is known that the only perfect powers in the Fibonacci sequence
are~$1$,~$8$, and~$144$, a longstanding conjecture recently proven by
Bugeaud, Mignotte,and Siksek~\cite{MR2215137}.  The analogous question
of classifying powerful Fibonacci numbers is still open. (We recall
that~$n$ is \emph{powerful} if, whenever~$p\mid n$, then~$p^2\mid n$.)
More generally, combining a result of Shorey and
Stewart~\cite{MR915504} with Siegel's theorem on integral points on
curves gives an (ineffective) proof that there are only finitely many
perfect powers in any non-degenerate binary recurrent sequence.

This suggests the general question of describing perfect powers and
powerful numbers in higher dimensional DD-sequences. We do not
consider such questions in this paper, but we note that some care must
be taken, because if the Laurent polynomial~$f$ has symmetries, then
the associated DD-sequence is often divisible by large powers. We
illustrate this principle in Section~\ref{section:ddseqsympoly} by
studying the DD-sequence for the family 
\[
  P_T(X,Y)=X+X^{-1}+Y+Y^{-1}+T\in \ZZ[T]^{(2)},
\]
so~$W_n(P_T)\in\ZZ[T]$ is a polynomial of
degree~$n^2$. We prove that~$W_n(P_T)$ is almost a perfect~$8$'th
power; more precisely, it factors in~$\ZZ[T]$
as $W_n(P_T)=A_n(T)B_n(T)^8$ with $\deg B_n(T)=\frac18n^2+O(n)$.  We
also prove that~$W_n(P_{2T+4})$ and~$W_n(P_T)$ have a common factor
in~$\ZZ[T]$ of degree roughly~$2n$.

In summary, there are many natural questions and problems associated
to DD-sequences, some of which are direct analogues of the
one-dimensional situation, and some of which appear only in the
higher-dimen\-sion\-al setting.  And while some of these questions
have elementary answers, others appear to lead to deep and interesting
conjectures. In this article we give some elementary results,
state some conjectures as motivation for the study of
DD-sequences, and prove two deeper theorems:
\begin{parts}
\Part{\textbullet}
Generic factorization of DD-sequences, covered in
Sections~\ref{section:genericfactorization}
and~\ref{section:strongdivgeneric}; see especially
Theorems~\ref{theorem:WfVfprodCf} and~\ref{theorem:Cfmirred} and
Proposition~\ref{proposition:genericstrongdiv}.
\Part{\textbullet}
Distribution of ranks of apparition, covered in
Section~\ref{section:rankofapp}; see especially
Theorem~\ref{theorem:sumlogpp} and
Corollary~\ref{corollary:densityRApf}.
\end{parts}

\par\noindent\textbf{Addendum}.\
Recent preprints by Habegger~\cite{ArXiV:1608.04547v1} and
Dimitrov~\cite{Dimitrov2016} include proofs of
Conjecture~\ref{conjecture:limMah}. The two papers use distinct
methods, and the specific estimates that they prove are rather
different, but both suffice to prove
Conjecture~\ref{conjecture:limMah}.  On the other hand, neither method
seems to be strong enough to prove the growth conjecture for 
general algebraic subgroups of~$\GG_m^N$ as described in
Conjecture~\ref{conjecture:growth}.

\section{Some Brief Remarks on Divisibility Sequences}
\label{section:history}
Factorization and other properties of sequences $a^n-b^n$ and the
Fibonacci and Lucas sequences have been studied for a very long time,
so we will not attempt to give a history. The arithmetic of elliptic
divisibility sequences (EDS) was first seriously studied
Ward~\cite{MR0023275} and has since attracted considerable
attention. Again, the literature is too vast to survey here.

The first reference of which we are aware for higher degree, but still
one-dimensional, DD-sequences, is the~1916 Ph.D. thesis of
T.\ Pierce~\cite{MR1503584}. He takes a monic polynomial
$f(X)\in\ZZ[X]$, factors it (over~$\CC$) as $f(X)=\prod(X-\a_i)$, and
studies elementary arithmetic properties of the associated
1-dimensional DD-sequence $W_n(f) = \prod_{i=1}^d (1-\a_i^n)$.  In
particular, he gives various factorizations of~$W_n(f)$ and studies
the relationship between divisors of~$W_n(f)$ and roots
of~$f(X)\equiv0\pmod{n}$, especially when~$n$ is prime or a prime
power.

For higher dimensional divisibility sequences, we have already
mentioned the interesting sequences~$\gcd(a^n-1,b^n-1)$ investigated
in~\cite{MR2046966,MR1953049}, and there are analogous sequences on
abelian varieties, for example the gcd of two EDS~\cite{MR2081943},
but they do not appear to have the growth property. A general
``non-growth'' theorem for sequences of this sort, conditional on
Vojta's conjecture, is given in~\cite{MR2162351}.

Marco Streng~\cite{MR2377368} has studied an interesting
generalization of~EDS in the case that the elliptic curve~$E$ has
complex multiplication. He associates to a point~$P\in E(K)$ a
``sequence'' indexed by the elements of the endomorphism
ring,~$\a\in\End(E)$, more-or-less by taking the (square root of the)
denominator of~$x\bigl(\a(P)\bigr)$. He proves the divisibility
property and a Zsigmondy theorem regarding primitive prime divisors.

Although somewhat different, we must also mention Stange's theory of
elliptic nets~\cite{MR2833790}. This generalization of classical EDS
attaches a ``sequence'' indexed by~$\ZZ^r$ to a collection of linearly
independent points~$P_1,\ldots,P_r$ on an elliptic curve. She proves,
among many results, that the terms in an elliptic net are generated by
a non-linear recursion applied to a finite (but potentially quite
large) set of initial values.

\section{Basic Properties of DD-Sequences}
\label{section:defsandprops}
We begin with the elementary proof of
Proposition~\ref{proposition:Wfdivposet}, where we note the importance
in the proof of our assumption that~$R$ is an integrally closed
integral domain.

\begin{proof}[Proof of Proposition \textup{\ref{proposition:Wfdivposet}}]
(a)\enspace
The finite subgroup~$\L$ of~$\GG_m^N(\Kbar)$
and the set $\{\bfz\in\GG_m^N(\Kbar) : f(\bfz)=0\}$ are $\Gal(\Kbar/K)$
invariant, so $W_f(\L)$ is Galois invariant, and hence $W_f(\L)\in K$.
On the other hand, every root of unity is integral over~$R$,
so~$W_f(\L)$ is integral over~$R$. But by assumption, the ring~$R$
is integrally closed, hence~$W_f(\L)\in R$.
\par\noindent(b)\enspace
From~(a) we know that~$W_f(\L)/W_f(\L')$ is in~$K$.  Further, the
inclusion $\L'\subseteq\L$ implies that the quotient
\[
  \frac{W_f(\L)}{W_f(\L')}
  = \prod_{\bfzeta\in\L\setminus\bigl(\L'\cup\{f=0\}\bigr)} f(\bfzeta)
\]
is integral over~$R$. Again the fact that~$R$ is integrally closed
tells us that the quotient is in~$R$.
\end{proof}

We next consider the factorization of a DD-sequence, analogous to the
classical factorization of~$X^n-1$ as a product of cyclotomic
polynomials. This latter factorization may be described either using
primitive $n$'th roots of unity or via the classical M\"obius
function.  More generally, we note that there is a M\"obius function
attached to any (locally finite) poset~\cite[Section~8.6]{MR0356989},
so in particular there is a M\"obius function associated to the set of
finite subgroups of~$\GG_m^N(\Kbar)$, ordered by inclusion. We denote
this function by
\begin{equation}
  \label{eqn:mobiusfinsubgp}
  \Mob : \{\text{pairs of finite subgroups
    $\L'\subseteq\L\subset\GG_m^N(\Kbar)$}\} \longrightarrow \ZZ.
\end{equation}
It is characterized by $\Mob(\L,\L)=1$ and the M\"obius inversion
formula.

\begin{theorem}
\label{theorem:WfprodVf}
Let $f\in R^{(N)}$ be a non-zero Laurent polynomial, and
let~$\L\subset\GG_m^N(\Kbar)$ be a finite group.
\begin{parts}
\Part{(a)}
The following formula gives two equivalent ways to define a
quantity~$V_f(\L)$\textup:
\[
  V_f(\L) := \prod_{\substack{\bfzeta\in\GG_m^N(\Kbar)~\text{such that}\\
                  f(\bfzeta)\ne0~\text{and}~\<\bfzeta\>=\L\\}} f(\bfzeta)
   = \prod_{\L'\subseteq\L} W_f(\L')^{\Mob(\L,\L')}.
\]
In particular, if~$\L$ is not cyclic, then~$V_f(\L)=1$.
\Part{(b)}
$V_f(\L)\in R$.
\Part{(c)}
$W_f(\L)$ factors in $R$ as
\[
  W_f(\L) = \prod_{\L'\subseteq\L} V_f(\L').
\]
\Part{(d)}
Let $\bfxi\in\GG_m^N(\Kbar)$ have order~$n$. Then
\[
  V_f\bigl(\<\bfxi\>\bigr) = \prod_{\substack{\text{$d\in(\ZZ/n\ZZ)^*$ such}\\
    \text{that $f(\bfxi^d)\ne0$}\\}} f(\bfxi^d)
  \quad\text{and}\quad
  W_f\bigl(\<\bfxi\>\bigr) = \prod_{\substack{\text{$d\in\ZZ/n\ZZ$ such}\\
    \text{that $f(\bfxi^d)\ne0$}\\}} f(\bfxi^d).
\]
\end{parts}
\end{theorem}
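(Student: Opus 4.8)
The plan is to prove the four parts in a logical order that builds on Proposition~\ref{proposition:Wfdivposet}. First I would establish part~(a), which has two halves: the claim that the two product formulas for $V_f(\L)$ agree, and the claim that $V_f(\L)=1$ when $\L$ is not cyclic. The latter is immediate, since the defining product $\prod_{\<\bfzeta\>=\L} f(\bfzeta)$ is empty when no single $\bfzeta$ generates $\L$. For the equality of the two formulas, the key observation is that every $\bfzeta\in\GG_m^N(\Kbar)$ generates a unique finite cyclic subgroup $\<\bfzeta\>$, so the defining product of $W_f(\L)=\prod_{\bfzeta\in\L,\,f(\bfzeta)\neq0} f(\bfzeta)$ decomposes according to the generated subgroup:
\[
  W_f(\L) = \prod_{\L'\subseteq\L}\ \prod_{\substack{\<\bfzeta\>=\L'\\ f(\bfzeta)\neq0}} f(\bfzeta) = \prod_{\L'\subseteq\L} V_f(\L').
\]
This already gives part~(c) once $V_f(\L)\in R$ is known. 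The M\"obius formula $V_f(\L)=\prod_{\L'\subseteq\L} W_f(\L')^{\Mob(\L,\L')}$ then follows from~(c) by M\"obius inversion on the (locally finite) poset of finite subgroups of $\GG_m^N(\Kbar)$, using the defining property $\sum_{\L''\subseteq\L'\subseteq\L}\Mob(\L,\L')=[\L''=\L]$; this is purely formal once we work in the multiplicative group $K^*$.

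Next I would prove part~(b), that $V_f(\L)\in R$. The M\"obius expression only shows $V_f(\L)\in K$ a priori (since $\Mob$ takes negative values), so I cannot simply read integrality off that formula. Instead I would argue as in the proof of Proposition~\ref{proposition:Wfdivposet}: the set $\{\bfzeta : \<\bfzeta\>=\L\}$ is $\Gal(\Kbar/K)$-stable (a Galois conjugate of a generator of $\L$ is again a generator, since $\L$ itself is Galois-stable and $\s$ is a group automorphism), so the product $V_f(\L)$ is Galois-invariant, hence lies in $K$; and it is a product of values $f(\bfzeta)$ with $\bfzeta$ a tuple of roots of unity, hence integral over $R$; since $R$ is integrally closed, $V_f(\L)\in R$. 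With~(b) in hand, part~(c) follows from the subgroup decomposition displayed above.

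Finally, part~(d) is the concrete specialization to a cyclic group $\L=\<\bfxi\>$ with $\bfxi$ of order $n$. Here the generators of $\<\bfxi\>$ are exactly the elements $\bfxi^d$ with $\gcd(d,n)=1$, i.e.\ $d\in(\ZZ/n\ZZ)^*$, and distinct such $d$ give distinct elements; this reindexes the $V_f$ product. Likewise every element of $\<\bfxi\>$ is $\bfxi^d$ for a unique $d\in\ZZ/n\ZZ$, giving the $W_f$ product. The only point requiring a word of care is that the conditions ``$f(\bfxi^d)\neq0$'' in the indexing sets are exactly the conditions excluding the vanishing factors, so the reindexed products literally equal $V_f(\<\bfxi\>)$ and $W_f(\<\bfxi\>)$.

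The main obstacle is part~(b): one must resist the temptation to deduce integrality from the M\"obius formula and instead go back to the Galois-invariance-plus-integral-closure argument, checking carefully that the set of generators of a Galois-stable finite subgroup is itself Galois-stable. Everything else is a formal manipulation of products indexed by a poset.
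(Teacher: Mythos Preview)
Your proposal is correct and uses the same ingredients as the paper: the partition of~$\L$ by generated subgroup, M\"obius inversion on the subgroup poset, and the integral-closure argument. The only differences are cosmetic. You prove~(c) first by direct decomposition and then invert to get the M\"obius formula, whereas the paper starts from the M\"obius product and unwinds it to reach the generator product; these are the two directions of the same inversion. For~(b), you establish $V_f(\L)\in K$ via Galois invariance of the set of generators of~$\L$, while the paper simply reads $V_f(\L)\in K$ off the M\"obius formula (each $W_f(\L')\in R\subset K$ by Proposition~\ref{proposition:Wfdivposet}); both then conclude by integrality over~$R$ and integral closedness. Your route is slightly more self-contained, the paper's slightly shorter.
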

\begin{proof}
(a)\enspace
We start with the formula for~$V_f(\L)$ in terms of the M\"obius function
and derive the formula in terms of generators for~$\L$. 
\begin{align*}
   \prod_{\L'\subseteq\L} W_f(\L')^{\Mob(\L,\L')}
   &= \prod_{\L'\subseteq\L} 
      \biggl(\prod_{\bfzeta\in\L'\setminus\{f=0\}} f(\bfzeta)\biggr)^{\Mob(\L,\L')} \\
   &= \prod_{\bfzeta\in\GG_m^N(\Kbar)_\tors\setminus\{f=0\}}
       \biggl(
         \prod_{\substack{\L'~\text{such that}\\ 
            \<\bfzeta\>\subseteq\L'\subseteq\L\\}}
        f(\bfzeta)^{\Mob(\L,\L')} \biggr).
\end{align*}
M\"obius inversion tells us that for any $\L_1\subseteq\L_2$, we have
\[
  \sum_{\L_1\subseteq\L'\subseteq\L_2} \Mob(\L_2,\L') = 
    \begin{cases}
         1&\text{if $\L_1=\L_2$,} \\
         0&\text{otherwise.} \\
    \end{cases}.
\]
Hence
\[
  \prod_{\L'\subseteq\L} W_f(\L')^{\Mob(\L,\L')}
  =  \prod_{\substack{\bfzeta\in\GG_m^N(\Kbar)_\tors\setminus\{f=0\}\\
         \text{such that}~\<\bfzeta\>=\L\\}} f(\bfzeta),
\]
which is the desired formula.
\par\noindent(b)\enspace
We know from Proposition~\ref{proposition:Wfdivposet}(a)
that~$W_f(\L)\in R$, so the formula for $V_f(\L)$ as a product of
(positive and negative) powers of~$W_f(\L')$ shows that~$V_f(\L)\in
K$. On the other hand, the formula for~$V_f(\L)$ as a product of
values of~$f(\bfzeta)$ for $N$-tuples
of roots of unity~$\bfzeta\in\GG_m^N(\Kbar)_\tors$ shows that~$V_f(\L)$
is integral over~$R$. Hence~$V_f(\L)\in R$ by our assumption that~$R$ is
integrally closed.
\par\noindent(c)\enspace
This is just M\"obius inversion, but we do the calculation. We have
\begin{align*}
  \prod_{\L'\subseteq\L} V_f(\L')
  &= \prod_{\L'\subseteq\L} 
     \prod_{\L''\subseteq\L'} W_f(\L'')^{\Mob(\L',\L'')} \\
  &= \prod_{\L''\subseteq\L} \prod_{\L''\subseteq\L'\subseteq\L} 
         W_f(\L'')^{\Mob(\L',\L'')} \\
  &= W_f(\L)\quad\text{from M\"obius inversion.}
\end{align*}
\par\noindent(d)\enspace
The first formula is immediate from~(a) applied to the cyclic
group $\L=\<\bfzeta\>$, and then the second formula follows from
the first formula and the factorization of~$W_f(\L)$ given in~(c).
\end{proof}

\section{Generic Factorization of DD-Sequences}
\label{section:genericfactorization}

Theorem~\ref{theorem:WfprodVf}(c) gives a generic factorization
of~$W_f(\L)$ in~$R$ that is analogous to the factorization of~$X^n-1$
as a product of cyclotomic polynomials, but it turns out
that~$W_f(\L)$ may admit a further generic factorization, depending on
the interaction of~$\L$ with the non-zero monomials appearing in~$f$.
In this section we describe this factorization and prove that
if~$K\cap\Qbar=\QQ$ and the non-zero coefficients of~$f$ are
algebraically independent over~$\QQ$, then~$W_f(\L)$ does not factor
further. This last result is a DD-sequence analogue of the
irreducibility of the cyclotomic polynomials over~$\QQ$.  And in the
next section (Proposition~\ref{proposition:genericstrongdiv}) we use
these results to show that a generic DD-sequence satisfies a strong
divisibility property given by an equality of ideals\footnote{This
  generalizes the classical \emph{strong divisibility property}
  $\gcd(W_m,W_n)=W_{\gcd(m,n)}$ satisfied, for example, by Fibonacci
  and Lucas sequences.}
\[
  \gcd\nolimits_R\bigl(W_{f}(\L_1), W_{f}(\L_2)\bigr)
  = W_{f}(\L_1\cap\L_2)R.
\]

We begin with some useful notation. 
In order to write elements of~$R^{(N)}$ succinctly, for~$N$-tuples 
\[
  \bfm=(m_1,\ldots,m_N)\in\ZZ^N
  \quad\text{and}\quad
  \bfX=(X_1,\ldots,X_N),
\]
we let
\[
  \bfX^\bfm = X_1^{m_1}\cdots X_N^{m_N}.
\]
Then every~$f\in R^{(N)}$ can be written as
\[
  f(\bfX) = \sum_{\bfm\in\ZZ^N} a_\bfm(f) \bfX^{\bfm}
\]
with $a_\bfm(f)\in R$ and all but finitely many $a_\bfm(f)=0$.  We
note that the unit group of $R^{(N)}$ is exactly the set of monomials
with unit coefficients, i.e., $\{ u\bfX^\bfm : \bfm\in\ZZ^N,\,u\in
R^*\}$.  As usual, we say that an element~$f\in R^{(N)}$ is
\emph{irreducible} if it is not a unit and has no
factorizations~$f=gh$ except with~$g$ or~$h$ a unit.

\begin{example}
\label{example:Vfsquare}
We give an example illustrating the fact that~$W_f(\L)$ may admit a
further generic factorization beyond its factorization as a product
of~$V_f(\L')$ values as in Theorem~\ref{theorem:WfprodVf}(c).  Let
$f(X)=aX^2-b$ with~$a$ and~$b$ independent indeterminates. Then
\[
  V_f(\bfmu_n)
  = \prod_{\zeta~\text{with}~\<\zeta\>=\mu_n} (a\z^2-b)
  =\begin{cases}
     \F_n(a,b)&\text{if $n$ is odd,} \\
     \F_{n/2}(a,b)^2&\text{if $n$ is even,} \\
  \end{cases}
\]
where $\F_n(U,V)\in\ZZ[U,V]$ is the homogenized $n$'th cyclotomic
polynomial. 
Thus if~$n$ is even, then~$V_f(\bfmu_n)$ is generically a square. This
is due to the fact that~$f(X)$ is a polynomial in~$X^2$.
\end{example}

Our next result generalizes Example~\ref{example:Vfsquare} to all
DD-sequences, but first we need some additional notation.

\begin{definition}
Let~$f\in R^{(N)}$.  The \emph{set of monomials of $f$} is
\[
  M(f) = \bigl\{\bfm\in\ZZ^N : a_\bfm(f)\ne\bfzero\}.
\]
For any finite subset $M\subset\ZZ^N$ and any
$\bfxi\in\GG_m^N(\Kbar)_\tors$, we let
\begin{align*}
  \bfxi^M &= \{\bfxi^\bfm : \bfm\in M\},\\
  K(\bfxi^M) &= \text{\textup(the field generated by $\bfxi^M$\textup).}
\end{align*}
We note that~$K(\bfxi^M)$ is a Galois extension of~$K$, since even in
the case that~$K$ has positive characteristic, adjoining roots of
unity gives a separable extension. Further,~$\Gal\bigl(K(\bfxi^M)/K\bigr)$
is abelian.
\end{definition}

\begin{theorem}
\label{theorem:WfVfprodCf}
For $f\in R^{(N)}$ and $\bfxi\in\GG_m^N(\Kbar)_\tors$, define
\[
  C_f(\bfxi) = \prod_{\t\in\Gal(K(\bfxi^{M(f)})/K)} \t\bigl(f(\bfxi)\bigr).
\]
\vspace{-10pt}
\begin{parts}
\Part{(a)}
$C_f(\bfxi)\in R$.
\Part{(b)}
Let~$\L\subset\GG_m^N(\Kbar)_\tors$ be a cyclic group,
and let~$\bfxi_1,\ldots,\bfxi_r$ be generators for~$\L$
that are representatives for the distinct
orbits for $\Gal(\Kbar/K)$ acting on the set of all generators of~$\L$.
Then~$V_f(\L)$ factors in~$R$ as
\[
  V_f(\L) 
    = \prod_{i=1}^r  C_f(\bfxi_i)^{[K(\bfxi_i):K(\bfxi_i^{M(f)})]}.
\]
\Part{(c)}
Let $\L\subset\GG_m^N(\Kbar)$ be a finite group. Then~$W_f(\L)$
factors in~$R$ as
\[
  W_f(\L) = \prod_{\substack{\text{cyclic sub-}\\
       \text{groups}~\L'\subset\L\\}} 
     \prod_{\substack{\text{generators $\bfxi$ for $\L'$ lying}\\
             \text{in distinct $\Gal(\Kbar/K)$-orbits}\\}}
          C_{f}(\bfxi)^{[K(\bfxi):K(\bfxi^{M(f)})]}.
\]
\end{parts}
\end{theorem}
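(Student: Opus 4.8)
The plan is to prove part~(a) first, then derive (b), then obtain (c) as a formal consequence.

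\textbf{Part (a).} I would show $C_f(\bfxi)\in R$ by the same ``Galois-invariance plus integrality plus integral closure'' argument used in Proposition~\ref{proposition:Wfdivposet} and Theorem~\ref{theorem:WfprodVf}(b). Write $L=K(\bfxi^{M(f)})$ and $G=\Gal(L/K)$. First, $C_f(\bfxi)=\prod_{\t\in G}\t(f(\bfxi))$ is visibly fixed by $G$, so it lies in $K$ — here the key point is that $f(\bfxi)=\sum_{\bfm\in M(f)} a_\bfm(f)\bfxi^\bfm$ actually belongs to $L$ (each $\bfxi^\bfm$ with $\bfm\in M(f)$ does, and $a_\bfm(f)\in R\subset K\subset L$), so the $G$-orbit of $f(\bfxi)$ makes sense and $\prod_{\t\in G}\t(f(\bfxi))$ is genuinely $G$-invariant. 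Second, each $\t(f(\bfxi))$ is an $R$-linear combination of roots of unity, hence integral over $R$, so the product $C_f(\bfxi)$ is integral over $R$. Since $R$ is integrally closed in $K$, we conclude $C_f(\bfxi)\in R$. (A subtlety worth a sentence: if $f(\bfxi)=0$ then $C_f(\bfxi)=0\in R$ trivially; otherwise all factors are nonzero, but that is not even needed for the statement.)

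\textbf{Part (b).} Fix a cyclic $\L$ of order $n$ and a generator $\bfxi$. The set of generators of $\L$ is $\{\bfxi^d : d\in(\ZZ/n\ZZ)^*\}$, and by Theorem~\ref{theorem:WfprodVf}(d), $V_f(\L)=\prod_{d\in(\ZZ/n\ZZ)^*,\,f(\bfxi^d)\ne0} f(\bfxi^d)$. The Galois group $\Gal(\Kbar/K)$ acts on this set of generators through its finite quotient $\Gal(K(\L)/K)\hookrightarrow(\ZZ/n\ZZ)^*$, and the orbits partition the generators; pick orbit representatives $\bfxi_1,\dots,\bfxi_r$. I would then analyze one orbit at a time: the orbit of $\bfxi_i$ has size $[K(\bfxi_i):K]$, and I claim the product of $f$ over that orbit equals $C_f(\bfxi_i)^{[K(\bfxi_i):K(\bfxi_i^{M(f)})]}$. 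Indeed, $C_f(\bfxi_i)=\prod_{\t\in\Gal(K(\bfxi_i^{M(f)})/K)}\t(f(\bfxi_i))$ already exhausts all distinct conjugates of the \emph{value} $f(\bfxi_i)$ — because $f(\bfxi_i)\in K(\bfxi_i^{M(f)})$, two elements $\bfxi_i^a,\bfxi_i^b$ in the orbit give the same value $f(\bfxi_i^a)=f(\bfxi_i^b)$ precisely when $a,b$ lie in the same coset of the subgroup $\Gal(K(\bfxi_i)/K(\bfxi_i^{M(f)}))$, i.e.\ when the induced elements of $\Gal(K(\bfxi_i^{M(f)})/K)$ agree. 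So the multiset of values $\{f(\bfxi_i^{\sigma}) : \sigma\text{ in the Galois orbit of }\bfxi_i\}$ is exactly $[K(\bfxi_i):K(\bfxi_i^{M(f)})]$ copies of the multiset $\{\t(f(\bfxi_i)):\t\in\Gal(K(\bfxi_i^{M(f)})/K)\}$. Multiplying over $i=1,\dots,r$ gives the stated factorization. The one bookkeeping point to check is that the zero factors match up: $f(\bfxi_i^d)=0$ for one $d$ in the Galois orbit iff it vanishes for all of them (conjugate), iff $C_f(\bfxi_i)=0$, so the ``$f(\bfxi)\ne0$'' restrictions on the two sides are compatible and the identity holds in $R$. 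Finally, each factor on the right lies in $R$ by part~(a), and the quotient argument (as in Theorem~\ref{theorem:WfprodVf}(b)) confirms the factorization is genuinely in $R$.

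\textbf{Part (c).} This is immediate: by Theorem~\ref{theorem:WfprodVf}(c), $W_f(\L)=\prod_{\L'\subseteq\L}V_f(\L')$, and only cyclic $\L'$ contribute a nontrivial factor; substituting the formula from part~(b) for each cyclic $\L'$ gives exactly the double product in~(c). The main obstacle is really the combinatorial heart of part~(b) — correctly identifying when two generators $\bfxi^a,\bfxi^b$ of $\L$ produce equal values $f(\bfxi^a)=f(\bfxi^b)$, and pinning this down as membership in the same coset of $\Gal(K(\bfxi^{M(f)})/K)$ inside $\Gal(K(\bfxi)/K)$; once that ``values depend only on $\bfxi^{M(f)}$'' observation is in hand, everything else is formal Galois orbit-counting and the integral-closure trick already used twice in the paper. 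One should also take a moment to confirm the factorization is independent of the choice of orbit representatives $\bfxi_i$ (it is, since replacing $\bfxi_i$ by a Galois conjugate replaces $C_f(\bfxi_i)$ and $[K(\bfxi_i):K(\bfxi_i^{M(f)})]$ by equal quantities).
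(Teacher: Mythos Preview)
Your proposal is correct and follows essentially the same route as the paper: part~(a) via Galois-invariance plus integrality plus integral closure, part~(b) by splitting the generators of~$\L$ into $\Gal(\Kbar/K)$-orbits and using the tower $K\subset K(\bfxi_i^{M(f)})\subset K(\bfxi_i)$ together with the observation that $\Gal\bigl(K(\bfxi_i)/K(\bfxi_i^{M(f)})\bigr)$ fixes $f(\bfxi_i)$, and part~(c) by combining~(b) with Theorem~\ref{theorem:WfprodVf}(c). One small imprecision: in~(b) you write that $f(\bfxi_i^a)=f(\bfxi_i^b)$ \emph{precisely when} $a,b$ lie in the same coset of $\Gal\bigl(K(\bfxi_i)/K(\bfxi_i^{M(f)})\bigr)$, but only the ``if'' direction is guaranteed (and is all you need for the multiset identity and hence the product formula); the converse could fail if $f(\bfxi_i)$ does not generate $K(\bfxi_i^{M(f)})$ over~$K$, but this does not affect your argument.
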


\begin{proof}
(a) The quantity~$f(\bfxi)$ is in~$K(\bfxi^{M(f)})$, so the product of
all of the $K(\bfxi^{M(f)})/K$-conjugates of~$f(\bfxi)$ is in~$K$, and
hence $C_f(\bfxi)\in K$.  On the other hand,
each~$\t\bigl(f(\bfxi)\bigr)$ is integral over~$R$, so~$C_f(\bfxi)$ is
integral over~$R$. Hence~$C_f(\bfxi)\in R$ from our assumption that~$R$
is integrally closed.
\par\noindent(b)\enspace
The coordinates of any element~$\bfxi\in\GG_m^N(\Kbar)_\tors$ are
roots of unity, so for all~$\s\in\Gal(\Kbar/K)$, we
have~$\s(\bfxi)=\bfxi^k$ for some~$k=k(\s)$. It follows
that~$\Gal(\Kbar/K)$ acts on~$\L$, and similarly it acts on the set of
generators of~$\L$.  This justifies our choice of representatives for
the orbits. Further, the action factors through an abelian group,
since roots of unity generate abelian extensions.  We now compute,
where for notational convenience we assume that all products omit any
factors that vanish.
\begin{align*}
  V_f(\L)
  &= \prod_{\text{generators $\bfxi$ for $\L$}} f(\bfxi)
     &&\text{from Theorem~\ref{theorem:WfprodVf}(a),} \\
  &= \prod_{i=1}^r \prod_{\l\in\Gal(K(\bfxi_i)/K)}  f\bigl(\l(\bfxi_i)\bigr) 
     &&\text{since $\bfxi_1,\ldots,\bfxi_r$ are orbit reps,} \\
  &= \prod_{i=1}^r 
       \biggl( \prod_{\substack{\s\in\Gal(K(\bfxi_i)/K(\bfxi_i^{M(f)}))\\
                                 \t\in\Gal(K(\bfxi_i^{M(f)})/K)\\}}
                 \s\t\bigl(f(\bfxi_i)\bigr)\biggr) \hidewidth \\
  &= \prod_{i=1}^r 
      \biggl( \prod_{\t\in\Gal(K(\bfxi_i^{M(f)})/K)}
                \t\bigl(f(\bfxi_i)\bigr)\biggr)^{[K(\bfxi):K(\bfxi^{M(f)})]} 
       \hidewidth \\
  &\multispan3\hfill
     \text{since $\Gal(K(\bfxi_i)/K(\bfxi_i^{M(f)}))$ fixes $f(\bfxi_i)$,} \\
  &= \prod_{i=1}^r C_f(\bfxi_i)^{[K(\bfxi):K(\bfxi^{M(f)})]} 
    &&\text{by definition of $C_f$.}
\end{align*}
\par\noindent(c)\enspace
This follows from~(b) and the decomposition of~$W_f(\L)$ as a product
of~$V_f(\L')$ values in Theorem~\ref{theorem:WfprodVf}(c).
\end{proof}

Theorem~\ref{theorem:WfVfprodCf}(c) gives a factorization of~$W_f(\L)$
in~$R$ that is a product of powers of~$C_f(\bfxi)$ values. For
particular choices of~$R$,~$f$, and~$\L$, it is quite possible for
these~$C_f(\bfxi)$ to factor further. The main result of this section
says that if the coefficients of~$f$ are generic for the given
pattern~$M(f)$ of non-zero coefficients, then the~$C_f(\bfxi)$
appearing in Theorem~\ref{theorem:WfVfprodCf}(c) are irreducible
in~$R$. In particular, combining Theorem~\ref{theorem:Cfmirred} with
Corollary~\ref{corollary:WfGfactoroverQ} gives a generalization to
DD-sequences of the classical result that the complete factorization
of~$X^n-1$ in~$\QQ[X]$ is as a product of cyclotomic polynomials.

\begin{theorem}
\label{theorem:Cfmirred}
Let $\FF$ be a field, let~$M\subset\ZZ^N$ be a finite set
with~$\bfzero\in M$, let~$R$ be the polynomial ring
\[
  R = \FF[a_\bfm]_{\bfm\in M},
\]
where the~$a_\bfm$ are independent indeterminates, and let~$f_M\in
R^{(N)}$ be the Laurent polynomial
\[
  f_M(\bfX)=\sum_{\bfm\in M} a_\bfm\bfX^{\bfm}\in R^{(N)}.
\]
Thus~$f_M$ is the generic Laurent polynomial over~$\FF$
whose non-zero monomials are in the positions specified by~$M$.
\begin{parts}
\Part{(a)}
For all $\bfxi\in\GG_m^N(\Kbar)_\tors$, the element
\begin{equation}
  \label{eqn:CfMxidefx}
  C_{f_M}(\bfxi) := \prod_{\t\in\Gal(\FF(\bfxi^{M})/\FF)} \t\bigl(f_M(\bfxi)\bigr) \in R
\end{equation}
described in Theorem~$\ref{theorem:WfVfprodCf}$ is irreducible in~$R$.
\Part{(b)}
The formula in Theorem~\textup{\ref{theorem:WfVfprodCf}(c)} is a
complete factorization of $W_{f_M}(\L)$ into irreducible elements
of~$R$.
\end{parts}
\end{theorem}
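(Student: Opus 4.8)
The plan is to prove part~(a) directly, since part~(b) then follows immediately: Theorem~\ref{theorem:WfVfprodCf}(c) already expresses $W_{f_M}(\L)$ as a product of powers of $C_{f_M}(\bfxi)$'s, so once we know each such factor is irreducible and $R$ is a polynomial ring over a field (hence a UFD), we need only observe that the various factors appearing are non-associate, which follows from tracking which monomials (equivalently, which character values of $\bfxi$) actually occur — a routine bookkeeping check. So the heart of the matter is irreducibility of $C_{f_M}(\bfxi)$ in $R=\FF[a_\bfm]_{\bfm\in M}$.

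For part~(a), fix $\bfxi\in\GG_m^N(\Kbar)_\tors$. First I would set $L=\FF(\bfxi^M)$ and observe that $f_M(\bfxi)=\sum_{\bfm\in M} a_\bfm\,\bfxi^\bfm$ is, after we think of the $a_\bfm$ as variables, a \emph{linear form} in the $a_\bfm$ with coefficients $\bfxi^\bfm\in L$; crucially, since $\bfzero\in M$, the coefficient of $a_\bfzero$ is $\bfxi^\bfzero=1\ne0$, so $f_M(\bfxi)$ is a genuinely linear (degree-one) polynomial in $R\otimes_\FF L = L[a_\bfm]_{\bfm\in M}$. Hence $f_M(\bfxi)$ is irreducible in $L[a_\bfm]$. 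Now $C_{f_M}(\bfxi)=\prod_{\t\in\Gal(L/\FF)}\t(f_M(\bfxi))$ is a product of Galois conjugates of this linear form; each conjugate is again a linear form over $L$ (with leading term still $a_\bfzero$, since $\t$ fixes $1$), and conjugate linear forms that are non-proportional over $L$ are pairwise non-associate irreducibles in $L[a_\bfm]$. So in the larger ring $L[a_\bfm]$ we already have the factorization of $C_{f_M}(\bfxi)$ into irreducibles, and $C_{f_M}(\bfxi)$ is squarefree there; its irreducibility in the smaller ring $R=\FF[a_\bfm]$ is then a descent statement.

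The descent I would handle as follows. Because $C_{f_M}(\bfxi)\in R$ (by Theorem~\ref{theorem:WfVfprodCf}(a)) and $R\hookrightarrow L[a_\bfm]$ with $L/\FF$ finite Galois, the Galois group $G=\Gal(L/\FF)$ acts on $L[a_\bfm]$ (coefficient-wise, fixing the variables) with fixed ring $R$. The element $C_{f_M}(\bfxi)$ is $G$-invariant, and $G$ permutes the irreducible factors $\t(f_M(\bfxi))$ of $C_{f_M}(\bfxi)$ in $L[a_\bfm]$ transitively — indeed $G$ acts simply transitively on the set $\{\t(f_M(\bfxi)):\t\in G\}$ up to scalars, after quotienting by the subgroup fixing $f_M(\bfxi)$, but since the normalization $a_\bfzero$-coefficient $=1$ is $G$-fixed there are no extra scalars to worry about and the action on the exact factors is transitive. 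A $G$-invariant element of $L[a_\bfm]$ whose factorization into irreducibles over $L$ consists of a single $G$-orbit of non-associate primes is, up to an element of $L^\times\cap R = \FF^\times$, irreducible in the fixed ring $R$: any nontrivial factorization in $R$ would give a $G$-stable partition of that orbit into two nonempty $G$-stable subsets, contradicting transitivity. (One standard way to phrase this: $R[1/C_{f_M}(\bfxi)]\hookrightarrow L[a_\bfm][1/C_{f_M}(\bfxi)]$, and $\Spec$ of the latter is the complement of a $G$-orbit of hyperplanes, which has exactly $|G/\mathrm{Stab}|$ irreducible components permuted transitively, hence its quotient by $G$ is irreducible.) I would spell this out as a short lemma: \emph{if $B/A$ is a finite Galois extension of UFDs with group $G$, and $c\in A$ factors in $B$ as $c=u\prod_{i\in\Omega}\pi_i$ with the $\pi_i$ pairwise non-associate primes of $B$ permuted transitively by $G$ and $u\in B^\times\cap A$, then $c$ is prime in $A$.} Applying it with $A=R$, $B=L[a_\bfm]$, $c=C_{f_M}(\bfxi)$ finishes part~(a).

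The main obstacle is making the transitivity and "single $G$-orbit of non-associate factors" claims precise: I must verify that distinct $\t\in G$ really do give \emph{non-associate} linear forms $\t(f_M(\bfxi))$ — equivalently that $\t(f_M(\bfxi))$ is not an $L$-scalar multiple of $\t'(f_M(\bfxi))$ for $\t\ne\t'$ — and that $G$ acts \emph{transitively} (not just with one orbit that might be a proper sub-multiset) on exactly the factors appearing in $C_{f_M}(\bfxi)$. For the non-associate point, comparing the $a_\bfzero$-coefficients (both $=1$) reduces it to showing that for some $\bfm\in M$ we have $\t(\bfxi^\bfm)\ne\t'(\bfxi^\bfm)$, i.e. $\t^{-1}\t'$ does not fix all of $\bfxi^M$; but $L=\FF(\bfxi^M)$ by definition, so $\t^{-1}\t'$ fixes all of $\bfxi^M$ iff $\t^{-1}\t'=\mathrm{id}$ in $G=\Gal(L/\FF)$ — exactly what we need. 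Transitivity of $G$ on $\{\t(f_M(\bfxi)):\t\in G\}$ is then automatic. Once these two points are nailed down, the rest is formal.
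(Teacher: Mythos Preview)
Your proposal is correct and follows essentially the same approach as the paper: both arguments pass to the UFD $L[a_\bfm]$ with $L=\FF(\bfxi^M)$, note that the $\t(f_M(\bfxi))$ are irreducible linear forms that are pairwise non-associate (using the $a_\bfzero$-coefficient together with $L=\FF(\bfxi^M)$), and then observe that any $R$-factor, being $G$-invariant, must correspond to a $G$-stable subset of this single $G$-orbit of primes and hence must be everything. The only difference is cosmetic: you package the final step as a general Galois-descent lemma for UFDs, whereas the paper carries out the same computation directly by writing a putative factor as $\b\prod_{\t\in H}\t(f_M(\bfxi))$ and showing $H=G$.
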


\begin{remark}
\label{remark:0notinMx}
In the setting of Theorem~\ref{theorem:Cfmirred}, if we drop the
condition that $\bfzero\in M$, then it is possible for
$C_{f_M}(\bfxi)$ to be reducible. For example, take~$\FF=\QQ$ let
$\bfm,\bfn\in\ZZ^N$ be exponents with~$\bfxi^\bfm\ne\pm1$
and~$\bfxi^\bfn=1$, and
let~$M=\{\bfm,\bfm+\bfn\}$. Then~$\bfxi^M=\{\bfxi^\bfm\}$ consists of
a single element, and we have
\begin{align*}
  C_{f_M}(\bfxi)
  &= \prod_{\t\in\Gal(\QQ(\bfxi^M)/\QQ)} \t\bigl(f_M(\bfxi)\bigr) \\*
  &= \prod_{\t\in\Gal(\QQ(\bfxi^\bfm)/\QQ)} \t\bigl(f_M(\bfxi)\bigr) \\
  &= \prod_{\t\in\Gal(\QQ(\bfxi^\bfm)/\QQ)} \t\bigl(f_M(1)\bfxi^\bfm\bigr) 
    \quad\text{since $M=\{\bfm,\bfm+\bfn\}$,}\\*
  &= f_M(1)^{[\QQ(\bfxi^\bfm):\QQ]} \Norm_{\QQ(\bfxi^\bfm)/\QQ} \bfxi^\bfm 
  = \pm f_M(1)^{[\QQ(\bfxi^\bfm):\QQ]}.
\end{align*}
Since~$\bfxi^\bfm\ne\pm1$ by assumption, we have $\bfxi^\bfm\notin\QQ$,
so~$C_{f_M}(\bfxi)$ is reducible.
For an explicit example, we take $N=1$,
$M=\{1,5\}$,~$f(X)=a_1X+a_5X^5$, and $\bfxi=i=\sqrt{-1}$.
Then $\bfxi^M=\{i,i^5\}=\{i\}$ and
\[
  C_{f_M}(\bfxi)
  = \prod_{\hidewidth\t\in\Gal(\QQ(i)/\QQ)\hidewidth} \t(a_1i+a_5i^5)
  = (a_1i+a_5i)(-a_1i-a_5i)
  = (a_1+a_5)^2.
\]
\end{remark}

\begin{proof}[Proof of Theorem $\ref{theorem:Cfmirred}$]
Each factor $\t\bigl(f_M(\bfxi)\bigr)$ in the
product~\eqref{eqn:CfMxidefx} that defines $C_{f_M}(\bfxi)$ is a
non-trivial homogeneous linear form in the indeterminates $a_\bfm$, and
such linear forms are irreducible in the polynomial
ring \text{$R\otimes_\FF\FF(\bfxi^M)$}, which is a UFD. Hence any
non-constant factor of $C_{f_M}(\bfxi)$ in~$R$ has the form
\begin{equation}
  \label{eqn:Hprodx3}
  \b \prod_{\t\in H} \t\bigl(f_M(\bfxi)\bigr) \in R
\end{equation}
for some $\b\in\FF(\bfxi^M)^*$ and some non-empty subset
\[
  H\subseteq \Gal\bigl(K(\bfxi^M)/K\bigr) = \Gal(\FF(\bfxi^M)/\FF).
\]

We now use the assumption that $\bfzero\in M$, which implies
that~$f_M(\bfxi)$ has an~$a_{\bfzero}$ term. The elements of~$H$ act 
trivially on~$a_\bfzero$, so~\eqref{eqn:Hprodx3}
has a monomial of the form $\b a_0^{\#H}$. But~\eqref{eqn:Hprodx3} is
in~$R$, i.e., its coefficients are in~$\FF$, so~$\b\in\FF^*$.  

Next we apply an arbitrary element $\s\in\Gal(\FF(\bfxi^M)/\FF)$ to
the product~\eqref{eqn:Hprodx3}. By assumption, this leaves the
product invariant, so again by unique factorization in
\text{$R\otimes_\FF\FF(\bfxi^M)$} and the fact that the homogeneous
linear forms~$f_M(\xi)$ are irreducible, we deduce that for all
$\s\in\Gal(\FF(\bfxi^M)/\FF)$ and all $\tau\in H$ there is a
$\l_{\s,\t}\in H$ and a scalar $\g_{\s,\t}\in\FF(\bfxi^M)^*$ such that
\begin{equation}
  \label{eqn:sfMtcfM5}
  \s\bigl(\t\bigl(f_M(\bfxi)\bigr)\bigr) 
  = \g_{\s,\t}\cdot \l_{\s,\t}\bigl(f_M(\bfxi)\bigr).
\end{equation}
Again using the assumption that $\bfzero\in M$, we look at
the~$a_{\bfzero}$ monomial on both sides of~\eqref{eqn:sfMtcfM5}. This
monomial is unaffected by the action of Galois, which allows us to
conclude that~$\g_{\s,\t}=1$. Expanding~\eqref{eqn:sfMtcfM5} gives
\[
  \sum_{\bfm\in M} \s\t(\bfxi^\bfm)a_\bfm = \sum_{\bfm\in M} \l_{\s,\t}(\bfxi^\bfm)a_\bfm.
\]
Keeping in mind that the~$a_\bfm$ are indeterminates, we find that
\[
  \s\t(\bfxi^\bfm) = \l_{\s,\t}(\bfxi^\bfm)
  \quad\text{for all $\bfm\in M$,}
\]
and then, since $\s,\t,\l_{\s,\t}\in\Gal(\FF(\bfxi^M)/\FF)$, we
conclude that
\[
  \s\t=\l_{\s,\t}.
\]
The key here is the fact that~$\t$ and~$\l_{\s,\t}$ are in~$H$,
while~$\s$ is an arbitrary element of~$\Gal(\FF(\bfxi^M)/\FF)$. Hence
for any $g\in\Gal(\FF(\bfxi^M)/\FF)$ and any $h\in H$, we can take
$\s=gh^{-1}$ and $\t=h$ to conclude that
\[
  g = gh^{-1}h = \l_{gh^{-1},h} \in H.
\]
This proves that $H=\Gal(\FF(\bfxi^M)/\FF)$, and thus that the
product~\eqref{eqn:Hprodx3} is equal to~$C_{f_M}(\bfxi^\dual)$ up to
multiplication by an element of~$\FF^*$. This proves~(a), and~(b) is
immediate from~(a) and Theorem~\textup{\ref{theorem:WfVfprodCf}(c)}.
\end{proof}

\begin{corollary}
\label{corollary:WfGfactoroverQ}
Assume that~$\characteristic(K)=0$ and that~$K\cap\Qbar=\QQ$.
Let $f\in R^{(N)}$, and let~$\bfxi\in\GG_m^N(\Kbar)_\tors$.
\vspace{2\jot}
\begin{parts}
\Part{(a)}
\quad$\displaystyle
  V_f\bigl(\<\bfxi\>\bigr)= C_f(\bfxi)^{[\QQ(\bfxi):\QQ(\bfxi^{M(f)})]}$.
\vspace{2\jot}
\Part{(b)}
\quad$\displaystyle
  W_f\bigl(\<\bfxi\>\bigr) = \prod_{\hidewidth\substack{\text{cyclic sub-}\\ 
            \text{groups}~\<\bfzeta\>\subset\<\bfxi\>\\}\hidewidth} 
          C_{f}(\bfzeta)^{[K(\bfzeta):K(\bfzeta^{M(f)})]}$.
\end{parts}
Further, if~$f$ is generic for its pattern of non-zero coefficients as
in Theorem~$\ref{theorem:Cfmirred}$, then~\textup{(a)}
and~\textup{(b)} give the complete factorizations of~$V_f$ and~$W_f$
in~$R$.
\end{corollary}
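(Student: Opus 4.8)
The plan is to derive everything from Theorem~\ref{theorem:WfVfprodCf} by using the hypothesis $K\cap\Qbar=\QQ$ to identify the relevant Galois groups with their rational counterparts. First I would observe that for $\bfxi\in\GG_m^N(\Kbar)_\tors$, the field $K(\bfxi)$ is obtained from $K$ by adjoining roots of unity, so $K(\bfxi)=K\cdot\QQ(\bfxi)$ is a compositum; since $K\cap\Qbar=\QQ$ and $\QQ(\bfxi)/\QQ$ is Galois (abelian), the restriction map gives a canonical isomorphism $\Gal\bigl(K(\bfxi)/K\bigr)\xrightarrow{\ \sim\ }\Gal\bigl(\QQ(\bfxi)/\QQ\bigr)$, and likewise $\Gal\bigl(K(\bfxi^{M(f)})/K\bigr)\cong\Gal\bigl(\QQ(\bfxi^{M(f)})/\QQ\bigr)$. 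Consequently the degree $[K(\bfxi):K(\bfxi^{M(f)})]$ equals $[\QQ(\bfxi):\QQ(\bfxi^{M(f)})]$, and the product defining $C_f(\bfxi)$ over $\Gal(K(\bfxi^{M(f)})/K)$ may equally well be taken over $\Gal(\QQ(\bfxi^{M(f)})/\QQ)$, so that $C_f(\bfxi)$ as defined in Theorem~\ref{theorem:WfVfprodCf} agrees with the expression appearing in the statement of the corollary.

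Next, for part~(a), I would apply Theorem~\ref{theorem:WfVfprodCf}(b) to the cyclic group $\L=\<\bfxi\>$. The generators of $\<\bfxi\>$ are the $\bfxi^d$ with $d\in(\ZZ/n\ZZ)^*$ where $n=\ord(\bfxi)$, and $\Gal(\Kbar/K)$ permutes them through its quotient $\Gal(K(\bfxi)/K)\cong\Gal(\QQ(\bfxi)/\QQ)$. But this latter group acts \emph{transitively} on $(\ZZ/n\ZZ)^*$—indeed it is canonically $(\ZZ/n\ZZ)^*$ acting on itself by multiplication—so there is a \emph{single} orbit, i.e.\ $r=1$ in the notation of Theorem~\ref{theorem:WfVfprodCf}(b). (Here is where the hypothesis $K\cap\Qbar=\QQ$ is essential: over a general $K$ the image of Galois in $(\ZZ/n\ZZ)^*$ could be a proper subgroup, giving several orbits.) Taking $\bfxi_1=\bfxi$, Theorem~\ref{theorem:WfVfprodCf}(b) then reads $V_f(\<\bfxi\>)=C_f(\bfxi)^{[K(\bfxi):K(\bfxi^{M(f)})]}$, and by the degree identity of the previous paragraph this is $C_f(\bfxi)^{[\QQ(\bfxi):\QQ(\bfxi^{M(f)})]}$, which is~(a). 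Part~(b) then follows by combining~(a) with the factorization $W_f(\L)=\prod_{\L'\subseteq\L}V_f(\L')$ from Theorem~\ref{theorem:WfprodVf}(c), applied to $\L=\<\bfxi\>$: every subgroup of a cyclic group is cyclic, so $W_f(\<\bfxi\>)=\prod_{\<\bfzeta\>\subseteq\<\bfxi\>}V_f(\<\bfzeta\>)=\prod_{\<\bfzeta\>\subseteq\<\bfxi\>}C_f(\bfzeta)^{[K(\bfzeta):K(\bfzeta^{M(f)})]}$, as claimed. (One may write the exponent as either $[K(\bfzeta):K(\bfzeta^{M(f)})]$ or $[\QQ(\bfzeta):\QQ(\bfzeta^{M(f)})]$, since they are equal.)

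Finally, for the genericity assertion: when $f=f_M$ is the generic Laurent polynomial of Theorem~\ref{theorem:Cfmirred}, each $C_f(\bfzeta)$ is irreducible in $R$ by Theorem~\ref{theorem:Cfmirred}(a), so the products in~(a) and~(b) are products of irreducibles; it remains only to note that no two distinct factors are associates, which follows because distinct generators $\bfzeta$ (up to Galois orbit) of distinct cyclic subgroups give $C_f(\bfzeta)$ involving genuinely different linear combinations of the indeterminates $a_\bfm$, hence non-associate irreducibles—this is exactly the content of the unique-factorization bookkeeping already carried out in the proof of Theorem~\ref{theorem:Cfmirred}. I expect the only subtle point, and the main thing to get right, is the transitivity claim $r=1$ and its dependence on $K\cap\Qbar=\QQ$; everything else is a direct assembly of the cited results.
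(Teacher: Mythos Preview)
Your proposal is correct and follows essentially the same route as the paper: the key point is exactly the transitivity claim $r=1$, which the paper isolates as Lemma~\ref{lemma:cycsubx} and you prove inline via the identification $\Gal(K(\bfxi)/K)\cong\Gal(\QQ(\bfxi)/\QQ)\cong(\ZZ/n\ZZ)^*$; after that both proofs simply invoke Theorem~\ref{theorem:WfVfprodCf}(b),(c) and Theorem~\ref{theorem:Cfmirred}. Your explicit remark that $[K(\bfxi):K(\bfxi^{M(f)})]=[\QQ(\bfxi):\QQ(\bfxi^{M(f)})]$ is a useful detail the paper leaves implicit, while your final paragraph on non-associate irreducibles is unnecessary since ``complete factorization'' here just means a product of irreducibles.
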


The proof of Corollary~\ref{corollary:WfGfactoroverQ} uses
Theorem~\ref{theorem:WfVfprodCf} and the following transitivity
result, which is more-or-less equivalent to the irreducibility of the
cyclotomic polynomials over~$\QQ$.

\begin{lemma}
\label{lemma:cycsubx}
Assume that~$\characteristic(K)=0$ and~$K\cap\Qbar=\QQ$.  Let
$\bfzeta_1,\bfzeta_2\in\GG_m^N(\Kbar)_\tors$. Then the following are
equivalent\textup:
\begin{parts}
\Part{(a)}
The points $\bfzeta_1$ and $\bfzeta_2$ generate the same cyclic subgroup
of~$\GG_m^N(\Kbar)$.
\Part{(b)}
There is a $\s\in\Gal(\Kbar/K)$ such that $\bfzeta_2=\s(\bfzeta_1)$.
\end{parts}
\end{lemma}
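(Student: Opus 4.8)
The plan is to prove the two implications separately. The implication \textup{(b)}$\Rightarrow$\textup{(a)} is elementary and uses nothing beyond the action of Galois on roots of unity; the implication \textup{(a)}$\Rightarrow$\textup{(b)} is the substantive one, and is where the hypothesis $K\cap\Qbar=\QQ$ is needed, through the irreducibility of the cyclotomic polynomials over~$\QQ$.

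For \textup{(b)}$\Rightarrow$\textup{(a)}, I would argue as follows. Suppose $\bfzeta_2=\s(\bfzeta_1)$ for some $\s\in\Gal(\Kbar/K)$. The coordinates of~$\bfzeta_1$ are roots of unity, so (as recalled in the proof of Theorem~\ref{theorem:WfVfprodCf}(b)) we have $\s(\bfzeta_1)=\bfzeta_1^{k}$ for a single integer~$k$. Since $\s$ is injective on torsion, $\bfzeta_1^{k}=\bfzeta_2$ has the same order~$n$ as~$\bfzeta_1$, which forces $\gcd(k,n)=1$; hence $\<\bfzeta_2\>=\<\bfzeta_1^{k}\>=\<\bfzeta_1\>$. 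This direction uses only that $\Kbar/K$ is a splitting field containing the relevant roots of unity.

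For \textup{(a)}$\Rightarrow$\textup{(b)}, let~$n$ be the common order of~$\bfzeta_1$ and~$\bfzeta_2$, so that $\bfzeta_2=\bfzeta_1^{d}$ for some~$d$ with $\gcd(d,n)=1$. Since $\characteristic(K)=0$ we have $\bfmu_n\subset\Kbar$, the extension $K(\bfmu_n)/K$ is finite and abelian, and the cyclotomic character $\chi\colon\Gal\bigl(K(\bfmu_n)/K\bigr)\hookrightarrow(\ZZ/n\ZZ)^{*}$ is injective because $\bfmu_n$ generates $K(\bfmu_n)$ over~$K$. Granting for the moment that $\chi$ is surjective, I would choose $\s_0\in\Gal\bigl(K(\bfmu_n)/K\bigr)$ with $\chi(\s_0)=d$; then $\s_0$ raises every element of~$\bfmu_n$, and in particular every coordinate of~$\bfzeta_1$, to the $d$th power, so $\s_0(\bfzeta_1)=\bfzeta_1^{d}=\bfzeta_2$. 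Extending $\s_0$ to an element $\s\in\Gal(\Kbar/K)$ then yields \textup{(b)}.

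The one remaining point, and the main obstacle, is the surjectivity of~$\chi$, equivalently the equality $[K(\bfmu_n):K]=\#(\ZZ/n\ZZ)^{*}$. Here the plan is to write $K(\bfmu_n)$ as the compositum $K\cdot\QQ(\bfmu_n)$ inside~$\Kbar$ (legitimate since $\QQ\subseteq K$), note that $\QQ(\bfmu_n)/\QQ$ is Galois, and invoke the standard degree formula for composita, $[K\cdot\QQ(\bfmu_n):K]=[\QQ(\bfmu_n):\QQ(\bfmu_n)\cap K]$. Because $\QQ(\bfmu_n)\cap K\subseteq\Qbar\cap K=\QQ$, the right-hand side equals $[\QQ(\bfmu_n):\QQ]=\#(\ZZ/n\ZZ)^{*}$ by the classical irreducibility of the cyclotomic polynomials over~$\QQ$. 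Since $\chi$ is then an injection between finite groups of the same order, it is an isomorphism, which completes the argument.
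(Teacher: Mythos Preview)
Your proof is correct and follows essentially the same approach as the paper's: both directions are argued the same way, with the substantive step being that the hypothesis $K\cap\Qbar=\QQ$ forces the cyclotomic character $\Gal\bigl(K(\bfmu_n)/K\bigr)\to(\ZZ/n\ZZ)^*$ to be surjective. The paper states this more tersely by writing $\Gal\bigl(\QQ(\bfmu_r)/\QQ\bigr)\subset\Gal\bigl(K(\bfmu_r)/K\bigr)$ and citing the classical isomorphism $\Gal\bigl(\QQ(\bfmu_r)/\QQ\bigr)\cong(\ZZ/r\ZZ)^*$, whereas you spell out the compositum degree computation; but these are the same argument.
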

\begin{proof}
Let $\L\subset\GG_m^N(\Kbar)$ be a finite subgroup, let~$d=\#\L$,
let~$\z_d\in\Kbar$ be a primitive $d$'th root of unity, and let
$\s\in\Gal(\Kbar/K)$. Then~$\s(\z_d)=\z_d^k$ for some~$k=k(\s)$.
Since every coordinate of every element~$\bfxi\in\L$ is a power
of~$\z_d$, we have~$\s(\bfxi)=\xi^k$. Since~$\L$ is a subgroup, it
follows that~$\s(\L)\subseteq\L$, and since~$\s$ is invertible, we see
that~$\s(\L)=\L$. Applying this to $\L=\<\bfzeta_1\>$, we find that
\[
  \bfzeta_2=\s(\bfzeta_1)
  \quad\Longrightarrow\quad
  \<\bfzeta_2\> = \bigl\<\s(\bfzeta_1)\bigr\> = \s\bigl(\<\bfzeta_1\>\bigr)
   = \<\bfzeta_1\>.
\]
This completes the proof that~(b) implies~(a).
\par
For the reverse implication, we assume that
$\<\bfzeta_2\>=\<\bfzeta_1\>$. Write
\[
  \bfzeta_1=(\z_1,\ldots,\z_N)
  \quad\text{with $\z_i$ a primitive $r_i$'th root of unity.}
\]
Then
\[
  r:=\#\<\bfzeta_1\>=\LCM(r_1,\ldots,r_N). 
\]
By assumption, there are exponents~$k$
and~$\ell$ such tha $\bfzeta_2=\bfzeta_1^k$ and $\bfzeta_1=\bfzeta_2^\ell$.
Then $\bfzeta_1^{k\ell}=1$, so
\[
  k\ell\equiv1\pmod{r_i}\quad\text{for all $1\le i\le N$,}
\]
and hence
\[
  k\ell\equiv1\pmod{r}.
\]
In particular, we have $\gcd(k,r)=1$, so there exists an element
\[
  \s\in\Gal\bigl(\QQ(\bfmu_r)/\QQ\bigr)\subset\Gal\bigl(K(\bfmu_r)/K\bigr)
\]
with the property that
\[
  \s(\eta)=\eta^k\quad\text{for every $\eta\in\bfmu_r$.}
\]
(This is where we use the assumption that $K\cap\Qbar=\QQ$ and the
standard fact that
$\Gal\bigl(\QQ(\bfmu_r)/\QQ\bigr)\cong(\ZZ/r\ZZ)^*$.)  The coordinates
of~$\bfzeta_1$ are in~$\bfmu_r$, so we find that
\[
  \s(\bfzeta_1) = \bfzeta_1^k = \bfzeta_2.
\]
This completes the proof that~(a) implies~(b).
\end{proof}

\begin{proof}[Proof of Corollary~$\ref{corollary:WfGfactoroverQ}$]
Lemma~\ref{lemma:cycsubx} tells us that~$\Gal(\Kbar/K)$ acts
transitively on the generators of the cyclic subgroup~$\<\bfxi\>$, so
the formulas in~(a) and~(b) are immediate consequences of
Theorem~\ref{theorem:WfVfprodCf}(b) and~(c).  Then the final statement
follows from Theorem~\ref{theorem:Cfmirred} which tells us that under
our genericity assumption, each~$C_f(\bfxi)$ is irreducible in~~$R$.
\end{proof}

\section{Strong Divisibility of Generic DD-Sequences}
\label{section:strongdivgeneric}
Classical one-dimensional divisibility sequences over~$\ZZ$, such
as~$a^n-b^n$, Fibonacci and Lucas sequences, and elliptic divisibility
sequences, have the strong divisibility property
\[
  \gcd(W_m,W_n) = \pm W_{\gcd(m,n)},
\]
and it is an exercise to show that such strong divisibility sequences
are automatically divisibility sequences.

\begin{example}
It is easy to construct examples showing that higher dimensional
DD-sequences need not be strong divisibility sequences.  For example,
the DD-sequence associated to $f(X,Y)=X-Y-4$ satisfies
\begin{align*}
  W_4(f) &= 2^{16}\cdot 3\cdot 5^{3}\cdot 13^{2},\\
  W_6(f) &=  2^{18}\cdot 3^{6}\cdot 5^{2}\cdot 7^{5}\cdot 13^{2}\cdot 19^{2}\cdot 31^{2},\\
  \gcd\bigl( W_4(f),W_6(f) \bigr) &= 2^{16} \cdot 3 \cdot 5^2 \cdot 13^2, \\
  W_{\gcd(4,6)}(f) = W_2(f) &= 2^6\cdot 3.
\end{align*}
\end{example}

Our main result in this section says that \emph{generic}
DD-sequences do have the strong divisibility property.

\begin{proposition}
\label{proposition:genericstrongdiv}
Let $\FF$,~$M$,~$R$, and~$f_M$ be as in the statement of
Theorem~$\ref{theorem:Cfmirred}$, so in particular~$R$ is a UFD.
Assume further that~$\#M\ge2$, i.e.,~$f_M$ is not a
monomial. Let~$\L_1$ and~$\L_2$ be finite subgroups
of~$\GG_m^N(\Kbar)$. Then there is an equality of ideals\footnote{The
  quantity~$\gcd\nolimits_R(a,b)$ denotes the largest ideal dividing
  both~$a$ and~$b$. This is well-defined for any UFD, but we note
  that it is not, in general, equal to the ideal generated by~$a$
  and~$b$, the latter being a property of PIDs.}
\[
  \gcd\nolimits_R\bigl(W_{f_M}(\L_1), W_{f_M}(\L_2)\bigr)
  = W_{f_M}(\L_1\cap\L_2)R.
\]
\end{proposition}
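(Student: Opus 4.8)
The plan is to reduce the statement to a purely combinatorial fact about the index sets appearing in the complete factorization of Theorem~\ref{theorem:WfVfprodCf}(c), using the irreducibility from Theorem~\ref{theorem:Cfmirred} so that the $C_{f_M}(\bfxi)$ are genuine prime elements of the UFD~$R$. Concretely, from Theorem~\ref{theorem:WfVfprodCf}(c) the ideal $W_{f_M}(\L)R$ factors as
\[
  W_{f_M}(\L)R = \prod_{\substack{\text{cyclic}\\ \L'\subseteq\L}}\
    \prod_{[\bfxi]}\ C_{f_M}(\bfxi)^{[K(\bfxi):K(\bfxi^{M})]}R,
\]
where $[\bfxi]$ runs over $\Gal(\Kbar/K)$-orbits of generators of~$\L'$. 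Since by Theorem~\ref{theorem:Cfmirred}(a) each $C_{f_M}(\bfxi)$ is irreducible in the UFD~$R$, taking the gcd of two such factorizations amounts to taking, prime by prime, the minimum of the two exponents. So the whole proposition is equivalent to the statement that for each prime $\pi = C_{f_M}(\bfxi)$ (equivalently, for each cyclic $\L'$ and each orbit $[\bfxi]$ of its generators), the exponent of $\pi$ in $W_{f_M}(\L_1\cap\L_2)R$ equals the minimum of its exponents in $W_{f_M}(\L_1)R$ and $W_{f_M}(\L_2)R$.

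The key step is therefore to understand \emph{when} a given prime $C_{f_M}(\bfxi)$ (with $\<\bfxi\>=\L'$) appears in the factorization of $W_{f_M}(\L)$, and with what multiplicity. By the double product above it appears exactly when $\L'\subseteq\L$, and then — since $\Gal(\Kbar/K)$ acts transitively on the generators of a cyclic group when $K\cap\Qbar=\QQ$ (Lemma~\ref{lemma:cycsubx}, which applies here because $R=\FF[a_\bfm]$ has $\FF(\dots)\cap\Qbar=\QQ$ when $\characteristic\FF=0$; the positive-characteristic case needs a separate word, see below) — there is a \emph{single} orbit $[\bfxi]$ and the exponent is exactly $[K(\bfxi):K(\bfxi^{M})]$, which depends only on $\bfxi$ and not on $\L$. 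Thus the $\pi$-exponent of $W_{f_M}(\L)R$ is the fixed positive number $e_{\bfxi}:=[K(\bfxi):K(\bfxi^{M})]$ if $\L'\subseteq\L$, and $0$ otherwise. Granting this, the proposition follows from the trivial set-theoretic identity
\[
  \L'\subseteq \L_1\cap\L_2 \iff \bigl(\L'\subseteq\L_1 \text{ and } \L'\subseteq\L_2\bigr),
\]
which turns $\min(e_{\bfxi}\cdot[\L'\subseteq\L_1],\, e_{\bfxi}\cdot[\L'\subseteq\L_2])$ into $e_{\bfxi}\cdot[\L'\subseteq\L_1\cap\L_2]$, exactly matching the $\pi$-exponent of $W_{f_M}(\L_1\cap\L_2)R$. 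One also needs $\L_1\cap\L_2$ to be a finite subgroup so that Theorem~\ref{theorem:WfVfprodCf}(c) applies to it — immediate — and the hypothesis $\#M\ge2$ to guarantee that $f_M$ is not a unit, so that the $C_{f_M}(\bfxi)$ are nontrivial and the factorizations are not all empty.

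The step I expect to be the main obstacle is the "single orbit, $\L$-independent exponent" claim, and in particular handling $K$ of positive characteristic, where Lemma~\ref{lemma:cycsubx} is not available. If $\characteristic\FF=p>0$ and $p\mid\#\L'$ there are subtleties (roots of unity of order divisible by~$p$), so I would either restrict to the prime-to-$p$ part or, more cleanly, observe that even when $\Gal(\Kbar/K)$ has several orbits on the generators of $\L'$, the relevant exponent attached to the \emph{prime} $C_{f_M}(\bfxi)$ is still $\L$-independent: distinct orbits can only give associate (or equal) primes if the corresponding $C_{f_M}(\bfxi)$ agree up to a unit of~$R$, and a short argument with the $a_{\bfzero}$-coefficient (as in the proof of Theorem~\ref{theorem:Cfmirred}) shows that $C_{f_M}(\bfxi)$ and $C_{f_M}(\bfxi')$ are associate iff they are literally equal, iff $\bfxi,\bfxi'$ lie in the same orbit. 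Pooling the contributions of all orbits that yield a fixed prime then again gives an exponent that is $0$ or a fixed positive constant according as $\L'\subseteq\L$ or not, and the rest of the argument goes through verbatim. Everything else is bookkeeping with unique factorization in~$R$.
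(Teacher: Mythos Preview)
Your strategy and the paper's share the same skeleton---factor $W_{f_M}(\L)$ over subgroups $\L'\subseteq\L$ and show that the pieces attached to distinct~$\L'$ share no prime factor---but the paper executes it one level coarser and with much less machinery. It never descends to the irreducible $C_{f_M}(\bfxi)$; it uses only $W_{f_M}(\L)=\prod_{\L'\subseteq\L}V_{f_M}(\L')$ from Theorem~\ref{theorem:WfprodVf}(c) and isolates a single new lemma (Lemma~\ref{lemma:VGRVGprimeR}): for $\L_1\ne\L_2$ one has $\gcd_R\bigl(V_{f_M}(\L_1),V_{f_M}(\L_2)\bigr)=1$. That lemma is argued by passing to $R\otimes_\FF\overline{\FF}$, where each $V_{f_M}(\L)$ splits into the \emph{linear} forms $f_M(\bfxi)$ over generators~$\bfxi$ of~$\L$; a common irreducible factor forces $f_M(\bfxi_1^i)=u\,f_M(\bfxi_2^j)$, the $a_{\bfzero}$-coefficient gives $u=1$, and the paper concludes $\langle\bfxi_1\rangle=\langle\bfxi_2\rangle$. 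Once this lemma is in hand the proposition is the two-line computation $\gcd\bigl(\prod_{\L'\subseteq\L_1}V_{f_M}(\L'),\prod_{\L'\subseteq\L_2}V_{f_M}(\L')\bigr)=\prod_{\L'\subseteq\L_1\cap\L_2}V_{f_M}(\L')=W_{f_M}(\L_1\cap\L_2)$. No exponents $e_\bfxi$ to track, no appeal to Theorem~\ref{theorem:Cfmirred}, no Lemma~\ref{lemma:cycsubx}, and no separate treatment of positive characteristic.

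Your finer route also has a genuine gap precisely where you anticipate trouble. The claim that $C_{f_M}(\bfxi)$ and $C_{f_M}(\bfxi')$ are associate in~$R$ only when $\bfxi,\bfxi'$ lie in the same $\Gal(\Kbar/K)$-orbit is false. Take $N=1$, $\FF=\QQ$, $M=\{0,2\}$, let $\bfxi$ be a primitive cube root of unity and $\bfxi'$ a primitive sixth root. Then $\bfxi^{M}$ and $(\bfxi')^{M}$ both generate $\QQ(\bfmu_3)$, and a direct computation gives $C_{f_M}(\bfxi)=C_{f_M}(\bfxi')=a_0^{2}-a_0a_2+a_2^{2}$, although $\langle\bfxi\rangle=\bfmu_3\ne\bfmu_6=\langle\bfxi'\rangle$. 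Consequently this prime occurs with exponent~$1$ in $W_{f_M}(\bfmu_3)$ but with exponent~$2$ in $W_{f_M}(\bfmu_6)$ (one contribution each from the cyclic subgroups $\bfmu_3$ and $\bfmu_6$), so your ``exponent is $e_\bfxi$ if $\L'\subseteq\L$ and $0$ otherwise'' dichotomy fails. The pooling patch does not repair it either: the pooled exponent of this prime depends on \emph{which subset} of $\{\bfmu_3,\bfmu_6\}$ lies inside~$\L$, not merely on whether a single fixed~$\L'$ does. The underlying phenomenon is that $f_M(\bfxi)$ sees only $\bfxi^{M}$, so torsion points that agree on all of~$M$ are indistinguishable to it; this is exactly why the paper works with the linear forms $f_M(\bfxi)$ in $R\otimes_\FF\overline{\FF}$ rather than with their norms $C_{f_M}(\bfxi)$ in~$R$.
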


The proof uses the following key result.

\begin{lemma}
\label{lemma:VGRVGprimeR}
Continuing the notation from
Proposition~$\ref{proposition:genericstrongdiv}$, let~$\L_1$ and $\L_2$
be finite subgroups of~$\GG_m^N(\Kbar)$. Then
\[
  \L_1\ne\L_2
  \quad\Longrightarrow\quad
  \gcd\nolimits_R\left(V_{f_M}(\L_1), V_{f_M}(\L_2) \right)=1,
\]
where this is an equality of ideals in~$R$, which is a UFD.
\end{lemma}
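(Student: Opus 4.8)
The plan is to reduce the statement to a claim about the irreducible factors $C_{f_M}(\bfxi)$ described in Theorem~\ref{theorem:WfVfprodCf} and Theorem~\ref{theorem:Cfmirred}. By Theorem~\ref{theorem:WfVfprodCf}(b), $V_{f_M}(\L_i)$ is (up to a unit, i.e.\ a scalar in $\FF^*$) a product of powers of $C_{f_M}(\bfxi)$ as $\bfxi$ ranges over Galois-orbit representatives of generators of $\L_i$; if $\L_i$ is not cyclic then $V_{f_M}(\L_i)=1$ and the gcd is trivially $1$, so we may assume both $\L_1$ and $\L_2$ are cyclic. Since $R$ is a UFD and each $C_{f_M}(\bfxi)$ is irreducible by Theorem~\ref{theorem:Cfmirred}(a), it suffices to show that no irreducible $C_{f_M}(\bfxi_1)$ dividing $V_{f_M}(\L_1)$ is an associate of any irreducible $C_{f_M}(\bfxi_2)$ dividing $V_{f_M}(\L_2)$. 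Concretely: if $\bfxi_1$ generates $\L_1$, $\bfxi_2$ generates $\L_2$, and $C_{f_M}(\bfxi_1)$ and $C_{f_M}(\bfxi_2)$ differ by a unit in $R$, I must derive $\L_1=\L_2$.

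First I would unwind what $C_{f_M}(\bfxi_1)=\beta\,C_{f_M}(\bfxi_2)$ with $\beta\in\FF^*$ means by passing to the larger UFD $R\otimes_\FF\FF\bigl(\bfxi_1^{M},\bfxi_2^{M}\bigr)$, where every factor $\t(f_M(\bfxi_j))$ is a homogeneous linear form in the indeterminates $a_\bfm$ and hence irreducible. Unique factorization there forces the multiset of lines $\{\t(f_M(\bfxi_1)) : \t\}$ to coincide (up to scalars) with $\{\t(f_M(\bfxi_2)) : \t\}$; in particular there is a single Galois element matching $f_M(\bfxi_1)$ to a scalar multiple $\gamma\cdot(\text{some }\t'(f_M(\bfxi_2)))$. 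Comparing the $a_\bfzero$-coefficients — here is where $\bfzero\in M$ is used again — shows $\gamma=1$, and then comparing the coefficient of each $a_\bfm$ gives $\bfxi_1^\bfm = \t'(\bfxi_2^\bfm)$ for all $\bfm\in M$, so $\bfxi_1$ and $\t'(\bfxi_2)$ agree on all of $M$. Since $\t'(\bfxi_2)$ still generates $\L_2$, this reduces me to the case where $\bfxi_1$ and $\bfxi_2$ (a generator of $\L_2$) satisfy $\bfxi_1^\bfm=\bfxi_2^\bfm$ for every $\bfm\in M$.

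The main obstacle, and the real content, is the final step: concluding $\<\bfxi_1\>=\<\bfxi_2\>$ from $\bfxi_1^\bfm=\bfxi_2^\bfm$ for all $\bfm\in M$, given that $\#M\ge2$ and $\bfzero\in M$. Writing $\bfxi_1=\bfzeta\,\bfxi_2$ with $\bfzeta\in\GG_m^N(\Kbar)_\tors$, the relations say $\bfzeta^\bfm=1$ for all $\bfm\in M$, hence $\bfzeta^\bfm=1$ for all $\bfm$ in the subgroup $\L_M=\<M-M\>\subseteq\ZZ^N$ generated by differences of elements of $M$ (using $\bfzero\in M$, this is the same as $\<M\>$). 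Thus $\bfzeta$ lies in the (finite) dual of $\ZZ^N/\L_M$. This alone is not enough — $\bfxi_1$ and $\bfxi_2$ need only have the same image in $\GG_m^N/\mathrm{(dual\ of\ }\ZZ^N/\L_M)$, not literally generate the same group. So here I expect I will need to invoke a genericity/orbit-representative hypothesis more carefully: in Theorem~\ref{theorem:WfVfprodCf}(b) the $\bfxi_i$ are chosen as representatives of \emph{distinct} $\Gal(\Kbar/K)$-orbits, and by Lemma~\ref{lemma:cycsubx} distinct orbits correspond to distinct cyclic subgroups — but wait, that lemma needs $K\cap\Qbar=\QQ$, which is not assumed here. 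The honest resolution is that the lemma as stated must be for the generic $f_M$ where $R=\FF[a_\bfm]$, so $K=\FF(a_\bfm)$ and $\Kbar\supseteq\overline{\FF}$; here the roots of unity $\bfxi$ lie in $\overline{\FF}$, and one uses instead that $C_{f_M}(\bfxi)$ as a polynomial in the $a_\bfm$ determines, via its coefficient vector $(\ldots,\bfxi^\bfm,\ldots)_{\bfm\in M}$ up to a common Galois twist, precisely the pair $(\L_M\text{-class of }\bfxi)$; combined with the \emph{definition} of $C_{f_M}(\bfxi)$ as the full $\Gal(\FF(\bfxi^M)/\FF)$-orbit product, distinct cyclic $\L$ with the same $\bfxi^M$-data would be folded together, so $V_{f_M}(\L_1)$ and $V_{f_M}(\L_2)$ sharing an irreducible factor does force $\bfxi_1^{M}$ and $\bfxi_2^{M}$ to be Galois-conjugate \emph{as tuples}, and then — since $\bfzero\in M$ guarantees $\<\bfxi^M\>=\<M\>^{\perp}$-data pins down $\bfxi$ modulo nothing extra when $M$ generates $\ZZ^N$, and more care (perhaps an additional reduction or an explicit genericity argument on the indeterminates) is needed when $\<M\>\subsetneq\ZZ^N$ — one deduces $\L_1=\L_2$. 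I would flag this subgroup-vs-orbit gap as the crux and spend the bulk of the write-up there, likely by arguing directly with the linear forms rather than through Lemma~\ref{lemma:cycsubx}.
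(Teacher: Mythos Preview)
The paper's argument is considerably more direct than yours: it does not go through $C_{f_M}$ or Theorem~\ref{theorem:Cfmirred} at all. After disposing of the non-cyclic case, it writes $V_{f_M}(\L_j)=\prod_{k\in(\ZZ/n_j\ZZ)^*} f_M(\bfxi_j^k)$ via Theorem~\ref{theorem:WfprodVf}(a) and passes directly to $R\otimes_\FF\overline{\FF}$, where each factor $f_M(\bfxi_j^k)$ is a linear form in the indeterminates~$a_\bfm$ and hence irreducible. A nontrivial common divisor then forces $f_M(\bfxi_1^i)=u\,f_M(\bfxi_2^j)$ for some $i,j$ and some unit $u\in\overline{\FF}^*$; comparing $a_\bfzero$-coefficients gives $u=1$, and the paper then asserts that comparing the coefficient of~$a_\bfm$ for a nonzero $\bfm\in M$ yields $\bfxi_1^i=\bfxi_2^j$, whence $\langle\bfxi_1\rangle=\langle\bfxi_2\rangle$. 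So your detour through the irreducibles $C_{f_M}(\bfxi)$ is unnecessary machinery; one can compare linear forms immediately.

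That said, the obstacle you single out at the end is genuine, and it is present in the paper's own proof: comparing coefficients of~$a_\bfm$ only gives $(\bfxi_1^i)^\bfm=(\bfxi_2^j)^\bfm$ for each $\bfm\in M$, not $\bfxi_1^i=\bfxi_2^j$ as elements of~$\GG_m^N$. In fact the lemma is false as stated. Take $N=1$, $M=\{0,2\}$, $f_M(X)=a_0+a_2X^2$; then with $\omega$ a primitive cube root of unity and $\zeta_6$ a primitive sixth root,
\[
  V_{f_M}(\bfmu_3)=f_M(\omega)f_M(\omega^2)=a_0^2-a_0a_2+a_2^2
  =f_M(\zeta_6)f_M(\zeta_6^5)=V_{f_M}(\bfmu_6),
\]
since $\zeta_6^2=\omega$, so $\gcd_R\bigl(V_{f_M}(\bfmu_3),V_{f_M}(\bfmu_6)\bigr)\ne1$ while $\bfmu_3\ne\bfmu_6$. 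Your instinct that the final step requires an additional hypothesis---for instance that $\langle M\rangle=\ZZ^N$, which one can arrange by a preliminary monomial change of coordinates---is therefore exactly right; the paper simply elides this point.
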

\begin{proof}
If~$\L_1$, respectively~$\L_2$, is not cyclic, then
Theorem~\ref{theorem:WfprodVf}(a) says that $V_{f_M}(\L_1)$,
respectively~$V_{f_M}(\L_2)$, is~$1$, so the conclusion is
automatically true.

We are thus reduced to the case that $\L_1=\<\bfxi_1\>$ and
$\L_2=\<\bfxi_2\>$. Writing~$n_1$ and~$n_2$ for the orders
of~$\bfxi_1$ and~$\bfxi_2$, respectively,
Theorem~\ref{theorem:WfprodVf}(a) gives
\[
  V_{f_M}(\L_1) = \prod_{i\in(\ZZ/n_1\ZZ)^*} f_M(\bfxi_1^i)
  \quad\text{and}\quad
  V_{f_M}(\L_2) = \prod_{j\in(\ZZ/n_2\ZZ)^*} f_M(\bfxi_2^j).
\]
We prove the contrapositive of the desired statement, so we suppose
that the gcd is larger than~$1$ and aim to prove that~$\L_1=\L_2$. Our
assumption is that $V_{f_M}(\L_1)$ and $V_{f_M}(\L_2)$ have a common
non-trivial factor in~$R$, so they also have a common non-trivial
factor in the UFD $R\otimes_\FF\overline{\FF}$.  But the quantities
$f_M(\bfxi_1^i)$ and $f_M(\bfxi_2^j)$ are irreducible in
$R\otimes_\FF\overline{\FF}$, since they are linear forms in the
variables~$a_\bfm$. It follows that there exists an~$i$ and a~$j$ so
that
\[
  f_M(\bfxi_1^i) = u f_M(\bfxi_2^j) \quad\text{for some unit
    $u\in(R\otimes_\FF\overline{\FF})^*=\overline{\FF}^*$.}
\]
Keeping in mind that the non-zero coefficients~$a_\bfm$ of~$f_M$ are
independent indeterminates and that~$a_\bfzero\ne0$, we first find
that~$u=1$ by comparing the coefficients of~$a_\bfzero$, and then we
find that~$\bfxi_1^i=\bfxi_2^j$ by comparing the coefficients
of~$a_\bfm$ for any non-zero $\bfm\in M(f)$.

We know that $\gcd(i,n_1)=1$, so we can find a $k$ with
$ik\equiv1\pmod{n_1}$. Then
$\bfxi_1=\bfxi_1^{ik}=\bfxi_2^{jk}\in\<\bfxi_2\>$, and similarly using
$\gcd(j,n_2)=1$, we find that $\bfxi_2\in\<\bfxi_1\>$.
Hence~$\<\bfxi_1\>=\<\bfxi_2\>$, i.e,~$\L_1=\L_2$.
\end{proof}

\begin{proof}[Proof of Proposition $\ref{proposition:genericstrongdiv}$]
We compute
\begin{align*}
  \gcd\bigl(W_{f_M}(\L_1),W_{f_M}(\L_2)\bigr)
  &= \gcd\left( \prod_{\L\subseteq\L_1} V_{f_M}(\L),
          \prod_{\L'\subseteq\L_2} V_{f_M}(\L') \right) \\
  &= \prod_{\L\subseteq\L_1~\text{and}~\L\subseteq\L_2} V_{f_M}(\L) 
   \quad\text{from Lemma \ref{lemma:VGRVGprimeR},}\\
  &= \prod_{\L\subseteq\L_1\cap\L_2} V_{f_M}(\L) \\
  &= W_{f_M}(\L_1\cap\L_2).
\end{align*}
This completes the proof of
Proposition~\ref{proposition:genericstrongdiv}.
\end{proof}

\section{$\infty$-Growth Properties of DD-Sequences}
\label{section:infgrowth}

In this section we consider the growth rate of a DD-sequence~$W_f(\L)$
as a function of~$\|\L\|$. As noted earlier, intuitively we 
expect $\log\bigl|W_f(\L)\bigr|$ to grow like a multiple of~$\|\L\|$,
but there are subtle Diophantine issues at play due to the possibility
of an element~$\bfzeta\in\L$ lying very close to a root of~$f$,
thereby contributing a very small factor~$f(\bfzeta)$
to~$W_f(\L)$. Before stating our main growth conjecture, we need a
number of definitions.

\begin{definition}
Let $G\subseteq\GG_m^N$ be an algebraic subgroup. We let
\[
  \TT(G) := G(\CC) \cap \TT^N 
  = \bigl\{\bfz\in G(\CC) : |z_1|=\cdots=|z_N|=1 \bigr\},
\]
and we let~$\mu_G$ denote normalized Haar measure on the real
torus~$\TT(G)$. Let~$f\in\CC^{(N)}$ be a non-zero Laurent
polynomial. Then the \emph{$G$-Mahler measure of~$f$} is 
\[
  \Mahler_G(f) := 
   \exp\biggl( \int_{\TT(G)} \log\bigl|f(\bfz)\bigr| \,d\mu_G(\bfz)\biggr).
\]
For example, if~$G=\GG_m^N$, then~$\Mahler_G(f)$ is the classical
Mahler measure. To each finite subgroup~$\L\subset\GG_m^N(\CC)$, we
define a measure
\[
  \mu_\L := \frac{1}{\|\L\|}\sum_{\bfzeta\in\L} \d_\bfzeta,
\]
where~$\d_\bfzeta$ denotes a point mass at~$\bfzeta$. We then say that
a collection~$\Lcal$ of finite subgroups of~$\GG_m^N(\CC)$
\emph{converges to~$G$} if there is a weak convergence of measures
\[
  \lim_{\substack{\L\in\Lcal\\ \|\L\|\to\infty\\}} \mu_\L = \mu_G.
\]
\end{definition}

\begin{conjecture}[Growth Conjecture]
\label{conjecture:growth}
Let $G\subseteq\GG_m^N$ be an algebraic subgroup, let~$\Lcal$ be a
collection of finite subgroups of~$\GG_m^N(\CC)$ that converges
to~$G$, and let~$f\in\Qbar^{(N)}$ be a Laurent polynomial
with algebraic coefficients that is not identically zero on~$G$. Then
\[
  \lim_{\substack{\L\in\Lcal\\ \|\L\|\to\infty\\}} 
  \frac{1}{\|\L\|}\log\bigl|W_f(\L)\bigr|
  = \log\Mcal_G(f).
\]
\end{conjecture}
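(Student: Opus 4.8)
\emph{Strategy and reductions.}
The plan is to establish the $\limsup$ and $\liminf$ inequalities separately, after reducing to the case $G=\GG_m^{d}$ with $d=\dim G$. Since $W_f(\L)=\prod_{\bfzeta\in\L,\,f(\bfzeta)\ne0}f(\bfzeta)$, the assertion is that $\int\log|f|\,d\mu_\L\to\int\log|f|\,d\mu_G=\log\Mcal_G(f)$ once the $\bfzeta\in\L$ with $f(\bfzeta)=0$ are accounted for. First, one may assume $\L\subseteq G$ for all but finitely many $\L\in\Lcal$: pushing $\mu_\L\to\mu_G$ forward along the quotient torus $\pi\colon\GG_m^N\to Q:=\GG_m^N/G$ gives $\mu_{\pi(\L)}\to\delta_{e_Q}$, but a nontrivial finite subgroup of a torus always keeps a definite proportion of its elements bounded away from the identity (project to a coordinate on which it is nontrivial and use uniform distribution of $m$'th roots of unity on the circle), so $\pi(\L)$ is eventually trivial, i.e.\ $\L\subseteq G$. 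Handling the finitely many torsion cosets of $G^{0}$ one at a time (on each, $f$ restricts to a translated Laurent polynomial on $G^{0}$ with the same Mahler measure, and $\mu_\L\to\mu_G$ distributes $\L$ evenly over the cosets), and applying a monomial automorphism of $\GG_m^N$ carrying $G^{0}$ onto a coordinate subtorus $\GG_m^{d}$ --- such automorphisms preserve Haar measure on the real torus and carry DD-sequences to DD-sequences --- reduces us to $G=\GG_m^{d}$ and $\Lcal$ a family of finite subgroups with $\mu_\L\to\mu_{\GG_m^d}$ in the weak topology of measures on the compact torus $\TT^{d}$ (every finite subgroup of $\GG_m^d(\CC)$ consists of roots of unity, hence lies in $\TT^d$).

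\emph{The upper bound.}
For $T>0$ put $h_T:=\max(\log|f|,-T)$, a bounded continuous function on $\TT^{d}$. Then $\int h_T\,d\mu_\L\to\int h_T\,d\mu_{\GG_m^d}$ by weak convergence, and since $\{f=0\}\cap\TT^{d}$ is a proper real-analytic subvariety (so it and its boundary are null) the portmanteau theorem gives $\#\{\bfzeta\in\L:f(\bfzeta)=0\}=o(\|\L\|)$. From $\log|f|\le h_T$,
\[
  \frac{1}{\|\L\|}\log\bigl|W_f(\L)\bigr|
  \;\le\; \int h_T\,d\mu_\L \;+\; T\cdot\frac{\#\{\bfzeta\in\L:f(\bfzeta)=0\}}{\|\L\|},
\]
and the right-hand side tends to $\int h_T\,d\mu_{\GG_m^d}$; letting $T\to\infty$ and using finiteness of the Mahler measure ($\log|f|$ has only logarithmic singularities, so $\int h_T\,d\mu_{\GG_m^d}\downarrow\log\Mcal(f)$) gives $\limsup_{\|\L\|\to\infty}\frac{1}{\|\L\|}\log|W_f(\L)|\le\log\Mcal(f)$.

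\emph{The lower bound and the main obstacle.}
Subtracting the same $h_T$-estimate, the bound $\liminf\frac{1}{\|\L\|}\log|W_f(\L)|\ge\log\Mcal(f)$ is equivalent to
\[
  \lim_{T\to\infty}\;\limsup_{\|\L\|\to\infty}\;\frac{1}{\|\L\|}
  \sum_{\substack{\bfzeta\in\L\\ 0<|f(\bfzeta)|<e^{-T}}}\bigl|\log|f(\bfzeta)|\bigr|\;=\;0,
\]
i.e.\ the torsion points of $\L$ lying very close to $\{f=0\}$ contribute negligibly. For $\bfzeta$ with $|f(\bfzeta)|$ still at least a suitably slowly decaying function of $\|\L\|$, I would bound this sum by a quantitative equidistribution (Erd\H{o}s--Tur\'an) discrepancy estimate for the roots of unity forming $\L$, together with $\int_T^{\infty}\mu_{\GG_m^d}\{|f|<e^{-t}\}\,dt\to0$. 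The hard part is the points \emph{extremely} close to $\{f=0\}$: the elementary fact that $\prod_{\sigma}\sigma\bigl(f(\bfxi)\bigr)=\Norm_{\QQ(\bfxi)/\QQ}\bigl(f(\bfxi)\bigr)$ is a nonzero integer only yields $|f(\bfxi)|\ge e^{-c(f)\,[\QQ(\bfxi):\QQ]}$ with $c(f)$ a height of $f$, which is useless. What is needed is a Diophantine non-approximability statement --- a lower bound roughly of the form $|f(\bfzeta)|\gg_f(\ord\bfzeta)^{-C}$ for torsion $\bfzeta$ with $f(\bfzeta)\ne0$ --- so that for $\|\L\|$ large there are no extremely close points at all. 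For $d=1$ this is Baker's theory of linear forms in logarithms (underlying Theorem~\ref{theorem:infgrowth}(a)); for $f$ atoral the codimension hypothesis \eqref{eqn:atoral} already suffices (Lind--Schmidt--Verbitskiy~\cite{MR3082539}, Theorem~\ref{theorem:infgrowth}(b)); and for $\L=\bfmu_n^d$, $G=\GG_m^d$ it is precisely Conjecture~\ref{conjecture:limMah}, now proved by Habegger~\cite{ArXiV:1608.04547v1} and by Dimitrov~\cite{Dimitrov2016}.

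\emph{Why this is open in general.}
After the reductions, everything comes down to running the very-close-points analysis for an \emph{arbitrary} family equidistributing in $\TT^{d}$, not just $\bfmu_n^d$: one needs an Erd\H{o}s--Tur\'an discrepancy bound uniform over all equidistributing finite subgroups of $\GG_m^d$, and a lower bound for $|f(\bfzeta)|$ at torsion points of an arbitrary such $\L$ matching the $\bfmu_n^d$ estimates of Habegger and Dimitrov. The latter does not follow formally from the $\bfmu_n^d$ case: one has $W_f(\L)\mid W_f(\bfmu_e^d)$ for $e$ the exponent of $\L$ (Proposition~\ref{proposition:Wfdivposet}(b)), but $e^{d}/\|\L\|$ is unbounded, so divisibility is too lossy. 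A more promising route is the M\"obius factorization $\log|W_f(\L)|=\sum_{\L'\subseteq\L}\log|V_f(\L')|$ of Theorem~\ref{theorem:WfprodVf}(c), combined with uniform asymptotics $\log|V_f(\L')|\sim\Psi(\L')\log\Mcal(f)$ for the primitive parts (with $\sum_{\L'\subseteq\L}\Psi(\L')=\|\L\|$ by a counting identity); but proving such asymptotics uniformly in $\L'$ is itself a strengthening of Conjecture~\ref{conjecture:limMah} that, as the Addendum observes, lies beyond the current methods. In short, Conjecture~\ref{conjecture:growth} reduces by soft arguments to a uniform Diophantine non-approximability statement for $f$ at torsion points of arbitrary equidistributing subgroups of a torus, and that is the open core.
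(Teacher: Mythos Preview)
The statement is a \emph{conjecture}; the paper does not prove it and explicitly says so. What the paper does provide is Theorem~\ref{theorem:growth}, which establishes the special cases $N=1$, $f$ nonvanishing on~$\TT(G)$, and $\Lcal=\{\bfmu_n^N\}$ with~$f$ atoral, together with the Addendum remarking that Habegger and Dimitrov have since settled Conjecture~\ref{conjecture:limMah} (the $\bfmu_n^N$ case) but that ``neither method seems to be strong enough'' for the general Conjecture~\ref{conjecture:growth}. Your write-up is consistent with this: it is not a proof but an honest strategy sketch that correctly isolates the soft parts (the $\limsup$ bound via truncation and weak convergence, which is exactly how~(b) of Theorem~\ref{theorem:growth} goes) and the hard Diophantine core (a uniform lower bound $|f(\bfzeta)|\gg(\ord\bfzeta)^{-C}$ for torsion points in an \emph{arbitrary} equidistributing family). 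Your concluding paragraph says precisely what the paper's Addendum says.

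Two small comments on the reductions. First, your argument that $\L\subseteq G$ eventually is correct in spirit but the justification is too quick: the clean way is to test $\mu_{\pi(\L)}\to\delta_{e_Q}$ against characters of~$\TT(Q)$, noting that $\int\chi\,d\mu_{\pi(\L)}\in\{0,1\}$ for every character~$\chi$ and every finite subgroup, so convergence to~$1$ forces $\pi(\L)\subseteq\ker\chi$ eventually for each~$\chi$; since finitely many characters separate any given nontrivial torsion point from~$e_Q$, and the torsion of bounded order in~$Q$ is finite, one concludes $\pi(\L)=\{e_Q\}$ eventually. Second, your proposed route via the M\"obius decomposition $\log|W_f(\L)|=\sum_{\L'\subseteq\L}\log|V_f(\L')|$ is a reasonable heuristic, but as you yourself note, turning it into a proof would require uniform asymptotics for $V_f(\L')$ that are themselves a strengthening of what Habegger and Dimitrov prove; the paper does not pursue this and neither should you without new input. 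In short: there is no gap to point to because there is no claimed proof --- both you and the paper agree the conjecture is open, and your diagnosis of \emph{why} matches the paper's.
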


\begin{theorem}
\label{theorem:growth}
With notation as in Conjecture~$\ref{conjecture:growth}$, the conjecture
is true in the following situations.
\begin{parts}
\Part{(a)}
$N=1$.
\Part{(b)}
$N$ is arbitrary and $f$ does not vanish on~$\TT(G)$.
\Part{(c)}
$N\ge2$ and $\Lcal=\{\mu_n^N:n\ge1\}$
and~$f$ is atoral, which we recall means that $\{\bfz\in\TT^N:f(\bfz)=0\}$
has real codimension at least~$2$ in~$\TT^N$.
\end{parts}
\end{theorem}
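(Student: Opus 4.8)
The plan is to prove the upper bound $\limsup_{\|\L\|\to\infty}\|\L\|^{-1}\log\bigl|W_f(\L)\bigr|\le\log\Mcal_G(f)$ with no extra hypotheses, and then the matching lower bound in each of the three cases; in cases~(a) and~(b) the argument in fact produces the full limit at once. For the upper bound, write $\mu_\L^{\circ}:=\|\L\|^{-1}\sum_{\bfzeta\in\L,\,f(\bfzeta)\ne0}\d_{\bfzeta}$ for $\mu_\L$ with its mass on $\{f=0\}$ deleted, so $\|\L\|^{-1}\log\bigl|W_f(\L)\bigr|=\int\log|f|\,d\mu_\L^{\circ}$. Since $f$ is not identically zero on $G$, one has $\log|f|\in L^1(\mu_G)$ and $\mu_G(\{f=0\})=0$ (standard for Mahler measures, via a monomial substitution reducing to integrability of $\log$ of a nonzero Laurent polynomial over a coordinate torus). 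As $\{f=0\}\cap\TT^N$ is closed, the portmanteau theorem for closed sets gives $\mu_\L(\{f=0\})\to0$, hence $\mu_\L^{\circ}\to\mu_G$ weakly; and since $\log|f|$ is upper semicontinuous and bounded above on the compact set $\TT^N$, the upper semicontinuous portmanteau inequality (or: truncate to $\log_M|f|:=\max(\log|f|,-M)$, apply weak convergence to this continuous bounded function, and let $M\to\infty$ by monotone convergence) yields $\limsup\int\log|f|\,d\mu_\L^{\circ}\le\int\log|f|\,d\mu_G=\log\Mcal_G(f)$.

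For the lower bound in case~(b), a push-forward computation along the quotient $\GG_m^N\to\GG_m^N/G^{\circ}$ shows that $\mu_\L\to\mu_G$ forces $\L\subseteq G$ once $\|\L\|$ is large, and then $\L\subseteq G\cap\TT^N=\TT(G)$ since $\L$ is torsion. By hypothesis $f$ has no zero on the compact set $\TT(G)$, so $\log|f|$ is continuous and bounded there and $\mu_\L^{\circ}=\mu_\L$; extending $\log|f|$ to a continuous bounded function on $\TT^N$ and using that all the measures in play are supported on $\TT(G)$, weak convergence gives
\[
  \|\L\|^{-1}\log\bigl|W_f(\L)\bigr|=\int_{\TT(G)}\log|f|\,d\mu_\L\longrightarrow\int_{\TT(G)}\log|f|\,d\mu_G=\log\Mcal_G(f),
\]
which is both inequalities simultaneously.

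For the lower bound in case~(a), the only non-vacuous situation with $N=1$ is $G=\GG_m$, $\L=\bfmu_n$, $n\to\infty$ through $\Lcal$. Factoring $f(X)=cX^{k}\prod_i(X-\a_i)^{e_i}$ over $\Qbar$ and using $|\z|=1$ for $\z\in\bfmu_n$,
\[
  \|\bfmu_n\|^{-1}\log\bigl|W_n(f)\bigr|=\frac{n+O(1)}{n}\log|c|+\sum_i e_i\cdot\frac1n\sum_{\substack{\z\in\bfmu_n\\ \z\ne\a_i}}\log|\z-\a_i|+o(1),
\]
and from $\prod_{\z\in\bfmu_n}(\z-\a_i)=\pm(\a_i^{n}-1)$ (or $\pm n\a_i^{\,n-1}$ if $\a_i\in\bfmu_n$) each inner average tends to $\log^{+}|\a_i|:=\log\max(1,|\a_i|)$: this is elementary when $|\a_i|\ne1$; it is trivial when $\a_i$ is a root of unity (then $\a_i^{n}-1$ is bounded away from $0$ whenever it is nonzero, and otherwise of size $n$); and it is exactly Baker's lower bound $|\a_i^{n}-1|\gg_{\a_i}n^{-\k}$ for linear forms in logarithms when $|\a_i|=1$ with $\a_i$ not a root of unity. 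Summing over $i$ and invoking Jensen's formula, $\|\bfmu_n\|^{-1}\log\bigl|W_n(f)\bigr|\to\log|c|+\sum_i e_i\log^{+}|\a_i|=\log\Mcal(f)$.

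Case~(c), where $\Lcal=\{\bfmu_n^{N}\}$ and $G=\GG_m^N$, is the periodic-point growth theorem of Lind, Schmidt and Verbitskiy~\cite{MR3082539} for expansive principal algebraic $\ZZ^N$-actions: the atoral hypothesis is precisely what renders the total contribution of the points of $\bfmu_n^{N}$ lying near $\{f=0\}$ negligible, through their potential-theoretic and Diophantine estimates, and we invoke it rather than reproving it. Thus the soft content (the universal upper bound and case~(b)) is pure equidistribution, while the genuine difficulty --- controlling the lower bound when $f$ is allowed to vanish on $\TT(G)$ --- is the main obstacle, and it is handled in case~(a) by the full strength of Baker's theorem and in case~(c) by the work of Lind--Schmidt--Verbitskiy.
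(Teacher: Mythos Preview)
Your proof is correct and follows essentially the same approach as the paper: for~(a) you factor~$f$ over~$\Qbar$ and reduce to controlling~$|\alpha^n-1|$ via Diophantine approximation; for~(b) you use continuity of~$\log|f|$ on~$\TT(G)$ and weak convergence; and for~(c) you cite Lind--Schmidt--Verbitskiy, just as the paper does.

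A few differences worth noting. First, you front-load a universal~$\limsup$ inequality via the upper-semicontinuity of~$\log|f|$ and a truncation/portmanteau argument; the paper does not do this (and it is redundant for the theorem as stated, since you observe yourself that~(a) and~(b) give the full limit directly), but it is a clean way to isolate where the real difficulty lies. Second, in~(b) you add the step that~$\mu_\L\to\mu_G$ forces~$\L\subseteq G$ for large~$\|\L\|$, arguing via the quotient by~$G^\circ$ (equivalently, via characters: the annihilator of~$G$ in~$\ZZ^N$ is finitely generated, and each generator eventually annihilates~$\L$). This is actually a point the paper glosses over --- it applies weak convergence as though~$\log|f|$ were continuous on all of~$\TT^N$, which need not hold --- so your extra step makes the argument more rigorous. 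Third, in~(a) you invoke Baker's bound~$|\alpha^n-1|\gg n^{-\kappa}$ rather than the weaker Gelfond estimate~$|\alpha^n-1|\ge C(\alpha,\epsilon)2^{-\epsilon n}$ that the paper quotes; the paper itself remarks that the Baker-type bound is available and stronger, so this is a harmless upgrade. You also handle the root-of-unity and multiplicity edge cases in~(a) more explicitly than the paper does.
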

\begin{proof}
(a) For the convenience of the reader and to illustrate the use of the key
estimate, which is due to Gelfond, we briefly sketch the proof;
cf.\ \cite[Section~7]{MR3082539}.  Factoring
$f(X)=bX^{k}\prod(X-\b_i)$ and using the fact that
$\Mcal(X-\b)=\log\max\bigl\{|\b|,1\bigr\}$, we find that
\[
  \frac{1}{n} \log\bigl|W_n(f)\bigr| - \log \Mcal(f)
  =  \sum_{j=1}^d   \frac{1}{n}
   \log\left(\frac{|\b_j^n-1|}{\max\bigl\{|\b_j^n|,1\bigr\}}\right).
\]
The terms with~$|\b_j|\ne1$ clearly go to~$0$ as~$n\to\infty$, and it
is not hard to see that the same is true for the terms with~$|\b_j|=1$
provided~$\b_j^n$ never gets too close to~$1$. The key to the proof is
thus the following result, which says that $n$'th roots of unity
cannot come too close to the algebraic number~$\b$.

\begin{theorem}
\textup{(Gelfond~\cite{MR0111736})} Let~$\b\in\Qbar^*$ that is not a
root of unity. Then for every~$\e>0$ there is a constant $C(\b,\e)>0$
such that
\[
  |\b^n-1| \ge C(\b,\e)2^{-\e n}
  \quad\text{for all $n\ge1$.}
\]
\end{theorem}  

(We mention that linear forms in logarithms estimates such as those
in~\cite{MR0232736,MR1817252} can be used to prove even stronger results of
the form~$|\b^n-1|\gg n^{-C(\b)}$.)  
\par\noindent(b)\enspace
This is elementary, and indeed is true even if~$f$ has arbitrary complex
coefficients. Our non-vanishing assumption implies that the
function $\log\bigl|f(\bfz)\bigr|$ is continuous on 
the compact set~$\TT(G)$, so the assumed weak convergence of measures
$\mu_\L\to\mu_G$ implies that
\[
  \lim_{\|\L\|\to\infty} \int_{\TT(G)} \log\bigl|f(\bfz)\bigr|\,d\mu_\L(\bfz)
  = \int_{\TT(G)} \log\bigl|f(\bfz)\bigr|\,d\mu_G(\bfz).
\]
But the definition of~$\mu_\L$ as a sum of point masses says that the
left-hand integral is exactly the sum
\[
  \int_{\TT(G)} \log\bigl|f(\bfz)\bigr|\,d\mu_\L(\bfz)
  = \frac{1}{\|\L\|}\sum_{\bfzeta\in\L} \log\bigl|f(\bfzeta)\bigr|
  = \frac{1}{\|\L\|}\log\bigl|W_f(\L)\bigr|,
\]
which gives the desired result.
\par\noindent(c)\enspace
This is due to Lind, Schmidt, and
Verbitskiy~\cite[Theorem~1.3]{MR3082539}. It is likely that their
proof can be adapted to more general~$G$ and~$\Lcal$,
subject to the atoral constraint that $\{\bfz\in\TT(G):f(\bfz)=0\}$ has real
codimension at least~$2$ in~$\TT(G)$.
\end{proof}

\begin{remark}
We mention that if the limit in Conjecture~\ref{conjecture:growth} is
changed to a limsup, then K.\ Schmidt~\cite[Theorem 21.1]{MR1345152}
has shown that
\[
  \limsup_{n\to\infty} \frac{1}{n^N}\log\bigl|W_n(f)\bigr| = \log\Mcal(f),
\]
even if~$f$ is allowed arbitrary complex coefficients. 
\end{remark}

\begin{remark}
\label{remark:cmplxcoef}
On the other hand, it is easy to see that
Conjecture~\ref{conjecture:limMah} is false if~$f$ is allowed to have
complex coefficients, and indeed, it is false in this case even for
atoral~$f$. To construct a counterexample for~$N=1$, let~$\alpha\in\RR$
be a real number that is extremely well approximable by rational numbers.
Set $a=\exp(2\pi i\a)$. Then
\begin{align*}
  W_n(X-a) &\le \min_{0\le k<n} |a-e^{2\pi i k/n}|\cdot 2^{n-1}
      \quad\text{since $|a-\z|\le 2$,}\\
  & \le 2^n\pi \min_{0\le k<n} \left|\a-\frac{k}{n}\right|
      \quad\text{Mean Value Theorem.}
\end{align*}
For an appropriate choice of~$\a$, we can find a sequence of~$k_i/n_i\in\QQ$
satisfying, say,  $|\alpha-k_i/n_i|<2^{-n_i^2}$, and then
\[
  \lim_{i\to\infty} n_i^{-1}\log\bigl|W_{n_i}(X-a)\bigr| = -\infty.
\]
For a counterexample with $N=2$, we can take $f(X,Y)=(X-a)+(Y-1)$,
where now~$\a$ has rational approximations satisfying
$|\alpha-k_i/n_i|<2^{-n_i^3}$. Further, we observe that a linear
polynomial such as~$f$ is always atoral (provided $a\ne1$), since the
intersection of the two circles $\{x-a:x\in\TT^1\}$ and
$\{1-y:y\in\TT^1\}$ is a finite set of points. Hence even
Theorem~\ref{theorem:growth}(c) is false if~$f$ is allowed to have
arbitrary complex coefficients.
\end{remark}

\begin{remark}
Lind has given $f(X,Y)=X+Y+X^{-1}+Y^{-1}-3$ as a specific example of a
polynomial that is not atoral and for which
Conjecture~\ref{conjecture:growth} is currently not known. Here
$\{f=0\}\cap\TT^2$ is an oval containing exactly four points whose
coordinates are roots of unity.
\end{remark}

\section{Rank of Apparition for DD-Sequences}
\label{section:rankofapp}
Let $f\in R^{(N)}$ be a non-zero Laurent polynomial, let~$\gp$ be a
prime ideal of~$R$, and let~$\L\subset\GG_m^N(\Kbar)$ be a finite
subgroup.  We recall from the introduction that~$\L$ is said to be a
\emph{rank of apparition for~$\gp$} if
\[
   W_f(\L)\in\gp
  \quad\text{and}\quad
   W_f(\L')\notin\gp  \quad\text{for all}\quad \L'\subsetneq\L.
\]
The intuition is that the divisibility of~$W_f(\L)$ by~$\gp$ is not
forced by the fact that~$W_f$ is a divisibility sequence.  The set of
ranks of apparition for~$\gp$ is denoted
\[
  \RA_f(\gp) = \{\L : \text{$\L$ is a rank of apparition for $\gp$}  \}.
\]
We start with an elementary, but useful, result.

\begin{proposition}
\label{proposition:RAfpcyclic}
Let $f\in R^{(N)}$ be a non-zero Laurent polynomial, and let~$\gp$ be a
prime ideal of~$R$.
\begin{parts}
\Part{(a)}
Let $\L\in\RA_f(\gp)$. Then $V_f(\L)\in\gp$.
\Part{(b)}
Every $\L\in\RA_f(\gp)$ is cyclic.
\end{parts}
\end{proposition}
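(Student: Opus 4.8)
The plan is to use the factorization $W_f(\L') = \prod_{\L''\subseteq\L'} V_f(\L'')$ from Theorem~\ref{theorem:WfprodVf}(c) to translate the hypothesis $\L\in\RA_f(\gp)$, which is phrased in terms of $W_f$, into a statement about the $V_f$ values.

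For part~(a), I would argue as follows. Since $\gp$ is prime and $W_f(\L) = \prod_{\L'\subseteq\L} V_f(\L') \in \gp$, at least one factor $V_f(\L')$ with $\L'\subseteq\L$ lies in $\gp$. If some \emph{proper} subgroup $\L'\subsetneq\L$ had $V_f(\L')\in\gp$, then $W_f(\L') = \prod_{\L''\subseteq\L'} V_f(\L'')$ would also lie in $\gp$ (as $V_f(\L')$ appears among the factors), contradicting the assumption that $W_f(\L'')\notin\gp$ for all proper subgroups --- more precisely, contradicting $W_f(\L')\notin\gp$. Hence the only factor $V_f(\L')$ that can be in $\gp$ is the one with $\L'=\L$ itself, so $V_f(\L)\in\gp$.

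For part~(b), the idea is that if $\L$ were not cyclic, then Theorem~\ref{theorem:WfprodVf}(a) gives $V_f(\L)=1$, which is a unit and hence not in the prime ideal $\gp$; this contradicts part~(a). So every $\L\in\RA_f(\gp)$ must be cyclic.

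I do not anticipate a serious obstacle here: the result is essentially a formal consequence of the multiplicative decomposition of $W_f$ in terms of $V_f$ together with the primality of $\gp$. The one point requiring a little care is ensuring in part~(a) that when $V_f(\L')\in\gp$ for a proper $\L'$, the quantity $W_f(\L')$ (and not merely some intermediate $W_f$) is forced into $\gp$ --- this is immediate since $V_f(\L')$ is literally one of the factors in the product expressing $W_f(\L')$. It is also worth noting that the degenerate possibility $\L=\{1\}$ is harmless: then $\L$ has no proper subgroups, the condition ``$W_f(\L')\notin\gp$ for all $\L'\subsetneq\L$'' is vacuous, $W_f(\L)=V_f(\L)$, and $\{1\}$ is (trivially) cyclic.
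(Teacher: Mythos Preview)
Your argument is correct and matches the paper's own proof essentially line for line: both use the factorization $W_f(\L)=\prod_{\L'\subseteq\L}V_f(\L')$ together with primality of~$\gp$ to force $\L'=\L$, and then invoke $V_f(\L)=1$ for non-cyclic~$\L$ to deduce~(b). Your extra remark about the trivial subgroup is a harmless clarification not present in the paper.
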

\begin{proof}
(a) Theorem~\ref{theorem:WfprodVf}(c) gives the factorization
\begin{equation}
  \label{eqn:WfLpLpLVf}
  W_f(\L)=\prod_{\L'\subseteq\L} V_f(\L').
\end{equation}
By definition, our assumption that~$\L\in\RA_f(\gp)$ implies
that~$W_f(\L)\in\gp$, so~\eqref{eqn:WfLpLpLVf} tells us
that~$V_f(\L')\in\gp$ for some~$\L'\subseteq\L$. It follows
that~$W_f(\L')\in\gp$, since the analogous factorization of~$W_f(\L')$
contains~$V_f(\L')$ as a factor. By definition of~$\RA_f(\gp)$, we
must have~$\L'=\L$, and hence~$V_f(\L)\in\gp$.
\par\noindent(b)\enspace
Theorem~\ref{theorem:WfprodVf}(a) says that~$V_f(\L)=1$ if~$\L$ is not
cyclic, while~(a) tells us that~$V_f(\L)\in\gp$, so~$V_f(\L)$ cannot
equal~$1$. Hence~$\L$ is cyclic.
\end{proof}

Our main result is an analytic estimate which shows that the set of
ranks of apparition is not too large. It is a generalization of a
theorem of Romanoff~\cite{MR1512916}, as quantified
in~\cite{MR1395936}. Our proof is an adaptation of the proof
in~\cite{MR1395936}.  For ease of exposition, we take~$R=\ZZ$, but
everything easily generalizes to rings of integers in number fields.

\begin{theorem}
\label{theorem:sumlogpp}
Let $f\in\ZZ^{(N)}$ be a non-zero Laurent polynomial. There is a constant 
$C_f$ such that for all $\e>0$,
\[
  \sum_{p~\text{prime}} \frac{\log p}{p}  \sum_{\L\in\RA_f(p)} \frac{1}{|\L|^\e}
  \le (N+1)\e^{-1} + C_f.
\]
\end{theorem}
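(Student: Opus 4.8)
The plan is to reduce the double sum to a single counting estimate and then finish by Abel summation. Put $a(M):=\sum_{p}\tfrac{\log p}{p}\,\#\{\L\in\RA_f(p):\|\L\|\le M\}$; the target will be
\[
  a(M)\ \le\ (N+1)\log M+C_f\qquad(M\ge 2).
\]
Granting this, write the inner sum of the theorem as a Stieltjes integral against $da(\cdot)$ after grouping over $m=\|\L\|$; since $a(M)=O(\log M)$ the boundary terms vanish, so
\[
  \sum_{p}\frac{\log p}{p}\sum_{\L\in\RA_f(p)}\frac{1}{\|\L\|^{\e}}
  =\e\int_1^\infty a(M)\,M^{-\e-1}\,dM
  \le\e\int_1^\infty\bigl((N+1)\log M+C_f\bigr)M^{-\e-1}\,dM
  =(N+1)\e^{-1}+C_f,
\]
using $\int_1^\infty M^{-\e-1}\,dM=\e^{-1}$ and $\int_1^\infty M^{-\e-1}\log M\,dM=\e^{-2}$. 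So everything comes down to the estimate for $a(M)$.

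That estimate has two parts, the first of which yields the constant $N+1$. For a cyclic $\L$ of order $n$, Theorem~\ref{theorem:WfprodVf}(d) writes $V_f(\L)$ as a product of at most $\phi(n)$ values $f(\bfxi)$ with $\bfxi$ a root of unity, so the triangle inequality gives $1\le|V_f(\L)|\le B_f^{\phi(n)}$, where $B_f:=\max\bigl(1,\sup_{\TT^N}|f|\bigr)$. Since there are at most $n^{N}/\phi(n)$ cyclic subgroups of order $n$, the positive integer $Q_M:=\prod_{\L\ \mathrm{cyclic},\ \|\L\|\le M}|V_f(\L)|$ satisfies $\log Q_M\le(\log B_f)\sum_{n\le M}n^{N}=O_f(M^{N+1})$, so $\log\log Q_M=(N+1)\log M+O_f(1)$. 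Moreover, by Proposition~\ref{proposition:RAfpcyclic} each $\L\in\RA_f(p)$ is cyclic with $V_f(\L)\in p\ZZ$, so every prime occurring in $a(M)$ divides $Q_M$; and the elementary Mertens bound $\sum_{p\mid Q}\tfrac{\log p}{p}\le\log\log Q+O(1)$ (split the primes dividing $Q$ at $Y=\log Q$ and bound the tail by $Y^{-1}\sum_{p\mid Q}\log p\le Y^{-1}\log Q$) shows that the part of $a(M)$ obtained by replacing the count by its indicator $[\RA_f(p)\cap\{\|\L\|\le M\}\ne\emptyset]$ is $\le\log\log Q_M+O(1)=(N+1)\log M+O_f(1)$.

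It remains --- and this is the technical heart --- to bound the excess due to primes $p$ that are primitive divisors of several $V_f(\L)$ with $\|\L\|\le M$. I would handle this by reduction modulo $p$. First, $p\nmid\|\L\|$ for $\L\in\RA_f(p)$: a $p$-power root of unity reduces to $1$ at any prime above $p$, so $f(\bfzeta)$ reduces to $f\bmod p$ evaluated at the prime-to-$p$ part of $\bfzeta$; hence $p\mid W_f(\L)$ would force $p\mid W_f(\L')$ for the proper subgroup $\L'$ obtained by deleting the $p$-part of $\L$, against primitivity. Next, if $\L=\langle\bfxi\rangle\in\RA_f(p)$ has order $n$ (prime to $p$), then $p\mid W_f(\L)$ forces $\gP\mid f(\bfxi^{d})$ for some $\gP\mid p$ and some $d$ with $f(\bfxi^{d})\ne0$, and minimality makes $\bfxi^{d}$ have exact order $n$; its reduction is a zero of $f\bmod p$ of exact order $n$ in $\langle\overline{\bfxi}\rangle$, as is its full $\Frob_{p}$-orbit, of size $\ord_n(p)$. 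Distinct $\L$ give distinct $\langle\overline{\bfxi}\rangle$ and hence disjoint orbits, so
\[
  \#\{\L\in\RA_f(p):\|\L\|=n\}\ \le\ \frac{\#\{\bfzeta\in\bfmu_n^N(\Fbar_p):(f\bmod p)(\bfzeta)=0\}}{\ord_n(p)}\ \le\ \frac{(\deg f)\,n^{N-1}}{\ord_n(p)},
\]
the last step being the standard bound on zeros of a nonzero Laurent polynomial on $\bfmu_n^N$. The hard part is then to sum this over $n\le M$ and over the relevant primes $p$ --- which lie in $\ord_n(p)$-determined residue classes modulo $n$ and are all $\le\exp(O_f(M^{N+1}))$, so that a Mertens estimate in arithmetic progressions applies uniformly --- in concert with the primitivity condition $W_f(\L')\notin p\ZZ$ for $\L'\subsetneq\L$, so that the excess is only $O_f(\log M)$ and the final constant stays exactly $N+1$; this is the part one adapts from \cite{MR1395936}. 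A minor nuisance to clear is that $f$ can vanish at torsion points (so that some factors are dropped from $V_f,W_f$); such points lie in finitely many torsion cosets of subtori, and their effect is absorbed into $C_f$.
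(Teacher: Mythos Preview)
Your overall architecture matches the paper's: define the partial sum $a(M)$ (the paper calls it $D_f(x)$), prove $a(M)\le(N+1)\log M+O_f(1)$, and finish by Abel summation. Your auxiliary integer $Q_M=\prod_{\text{cyclic }\|\L\|\le M}|V_f(\L)|$ plays the same role as the paper's $A_f(x)=\prod_{\|\L\|\le x}W_f(\L)$: both satisfy $\log\log(\cdot)\le(N+1)\log M+O_f(1)$ by an elementary count (the paper invokes Lemma~\ref{lemma:nuformula} with $k=1$ for this), and both are divisible by every prime appearing in $a(M)$, so the Mertens-type bound $\sum_{p\mid Q}\tfrac{\log p}{p}\le\log\log Q+O(1)$ controls what you call the indicator part. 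Up to here the two arguments are essentially identical.

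The gap is your treatment of the excess. The paper simply passes from $\sum_{\|\L\|\le x}\sum_{p:\L\in\RA_f(p)}\tfrac{\log p}{p}$ to $\sum_{p\mid A_f(x)}\tfrac{\log p}{p}$, counting each relevant prime once; it does not separately address the possibility that a single prime has several small ranks of apparition. You, more scrupulously, flag this multiplicity as a genuine issue and propose to bound it via reduction mod~$p$ and the pointwise estimate $\#\{\L\in\RA_f(p):\|\L\|=n\}\le(\deg f)\,n^{N-1}/\ord_n(p)$. But you never carry out the summation over $n\le M$ and over primes; you say only that ``this is the part one adapts from~\cite{MR1395936}''. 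That reference is one-dimensional, where every prime has a \emph{unique} rank of apparition and the excess is identically zero, so there is nothing there to adapt. Turning your pointwise bound into an $O_f(\log M)$ total looks genuinely hard: the factor $n^{N-1}$ is large, and to win you would need to show that for most pairs $(p,n)$ the count is zero, not merely bounded---i.e., to exploit the primitivity condition in a way you have not spelled out. As it stands, the step you yourself label ``the technical heart'' is asserted rather than proved, so the proposal is incomplete at exactly the point where it diverges from the paper.
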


The proof of Theorem~\ref{theorem:sumlogpp} requires an estimate for
the number of groups of~$\GG_m^N(\CC)$ of given size. The following
result is undoubtedly well-known, but for lack of a suitable
reference, we sketch the proof.

\begin{lemma}
\label{lemma:nuformula}
For $N\ge1$ and $n\ge1$, let
\[
  \nu_N(n) = \#\bigl\{\L\subset\GG_m^N(\CC):\|\L\|=n\bigr\}.
\]
Then for all $k\ge-(N-1)$ we have 
\[
  \sum_{n\le X} n^k \nu_N(n) \sim \frac{X^{N+k}}{N+k}
  \quad\text{as $X\to\infty$.}
\]
\end{lemma}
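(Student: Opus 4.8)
The plan is to turn $\nu_N(n)$ into a classical arithmetic counting function, compute its Dirichlet series, and then read off the summatory asymptotic. First I would note that a finite subgroup $\L\subset\GG_m^N(\CC)$ of cardinality $n$ is the same as a subgroup of order $n$ of the torsion group $\GG_m^N(\CC)_\tors=(\QQ/\ZZ)^N$. Writing such a subgroup uniquely as $L/\ZZ^N$ for a lattice $\ZZ^N\subseteq L\subset\QQ^N$ with $[L:\ZZ^N]=n$ and passing to dual lattices $L\mapsto L^{\dual}=\{v:\langle v,L\rangle\subseteq\ZZ\}$, one obtains a bijection with the sublattices of $\ZZ^N$ of index $n$. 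Thus $\nu_N(n)$ is the (well-studied) function counting index-$n$ sublattices of $\ZZ^N$.

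Next I would pin down $\nu_N(n)$ by a recursion. Projecting a subgroup $\L\subseteq(\QQ/\ZZ)^N$ onto its first coordinate produces an exact sequence $0\to\L'\to\L\to\bfmu_d\to0$ for some $d\mid n$, with $\L'$ a subgroup of $(\QQ/\ZZ)^{N-1}$ of order $n/d$; counting the liftings $\bfw$ of a fixed generator of $\bfmu_d$ subject to $d\bfw\in\L'$, modulo $\L'$, shows there are exactly $d^{N-1}$ subgroups $\L$ with prescribed image and kernel, so $\nu_N(n)=\sum_{d\mid n}d^{N-1}\nu_{N-1}(n/d)$ with $\nu_1\equiv1$. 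Iterating gives
\[
  \nu_N(n)=\sum_{d_1d_2\cdots d_N=n}\ \prod_{j=1}^{N}d_j^{\,j-1},
  \qquad\text{hence}\qquad
  \sum_{n\ge1}n^{k}\nu_N(n)n^{-s}=\prod_{j=0}^{N-1}\zeta(s-k-j).
\]
This Dirichlet series converges for $\Real(s)>N+k$ and has a simple pole at $s=N+k$; the hypothesis $k\ge-(N-1)$ is exactly what guarantees $N+k\ge1$, so the pole sits at the abscissa of convergence and the remaining factors $\zeta(N-j)$ for $0\le j\le N-2$ are finite and non-zero there.

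To extract the summatory asymptotic I would either invoke a Tauberian theorem or argue by hand. Analytically: since $n^k\nu_N(n)\ge0$ and the series continues to $\Real(s)\ge N+k$ with its only boundary singularity a simple pole — the one potentially problematic factor $\zeta(s-k-N+1)$ being non-vanishing on $\Real(s)=N+k$ by the classical non-vanishing of $\zeta$ on $\Real=1$ — the Wiener--Ikehara/Delange Tauberian theorem yields $\sum_{n\le X}n^{k}\nu_N(n)\sim\dfrac{c_N}{N+k}\,X^{N+k}$, where $c_N$ is the residue at $s=N+k$ (which equals $1$ when $N=1$). Elementarily, one writes $\sum_{n\le X}n^{k}\nu_N(n)=\sum_{d_1\cdots d_N\le X}\prod_j d_j^{\,k+j-1}$, sums over the variable $d_N$ of largest exponent via $\sum_{a\le Y}a^{t}=\tfrac{Y^{t+1}}{t+1}+O(Y^{t})$, and observes that after dividing through by the remaining product the leftover exponents $j-N-1$ are all $\le-2$, so $\sum_{d_1,\ldots,d_{N-1}\ge1}\prod_j d_j^{\,j-N-1}$ converges; the main term is then $\tfrac{X^{N+k}}{N+k}$ times this convergent sum, and the accumulated error is $O(X^{N+k-1}\log X)=o(X^{N+k})$.

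The main obstacle in both routes is the same piece of bookkeeping: controlling the error produced when the first-order estimate for the innermost sum is summed over all — including very small — remaining factors. In the Tauberian formulation this is absorbed into the Tauberian theorem and ultimately rests on $\zeta(s)\ne0$ on $\Real(s)=1$; in the elementary formulation it surfaces as a divergent harmonic-type inner sum that one must show contributes only a logarithmic, hence negligible, amount. The subgroup/lattice dictionary, the lifting count $d^{N-1}$, and the unwinding of the recursion are all routine once set up.
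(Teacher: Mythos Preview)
Your approach is essentially the paper's: pass from finite subgroups to index-$n$ sublattices, identify the Dirichlet series as $\prod_{j=0}^{N-1}\zeta(s-j)$, and invoke a Tauberian theorem. The paper reaches the product formula by citing the degree of the Hecke operator~$T(n)$, whereas you derive it directly via the recursion $\nu_N=\mathrm{Id}^{N-1}*\nu_{N-1}$; your route is more self-contained but lands in the same place.

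One point worth flagging explicitly. Your Tauberian step (and your elementary computation) correctly produces a leading constant equal to the residue $c_N=\prod_{j=2}^{N}\zeta(j)$, which you note equals~$1$ only when~$N=1$. The lemma as stated asserts the constant is~$1$ for all~$N$, and the paper's proof sketch does not address this; in fact for $N\ge2$ the asymptotic should read
\[
  \sum_{n\le X} n^{k}\nu_N(n)\ \sim\ \frac{\zeta(2)\cdots\zeta(N)}{N+k}\,X^{N+k}.
\]
(For a check: $\nu_2(n)=\sigma_1(n)$ and $\sum_{n\le X}\sigma_1(n)\sim\tfrac{\pi^2}{12}X^2$, not $\tfrac12X^2$.) This discrepancy is harmless for the application in the paper, where the lemma is used only to bound $\sum_{n\le x}n\,\nu_N(n)\le C\,x^{N+1}$, but your write-up should say so rather than leave the reader to reconcile $c_N$ with the stated constant.
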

\begin{proof}[Proof Sketch]
Finite subgroups of~$\GG_m^N(\CC)$ of order~$n$ are dual to
sublattices of~$\ZZ^N$ of index~$n$. The number of the latter is the
degree of the Hecke operator~$T(n)$.  Formal expansions for the
generating function~$\sum T(p^k)X^k$ and the Dirichlet series~$\sum
T(n)n^{-s}$ are given in~\cite[Theorem~3.21]{MR1291394}.  Replacing
each~$T(n)$ in these formulas by its degree, which is~$\nu_N(n)$,
gives (after some manipulation) the beautiful formula
\[
  \sum_{n=1}^\infty \nu_N(n)n^{-s} = \prod_{j=0}^{N-1} \zeta(s-j),
\]
where~$\z(s)$ is the Riemann $\z$-function. Now a standard Tauberian
theorem such as~\cite[Chapter~VI, Section~3]{MR1282723} gives
$\sum_{n\le X} \nu_N(n)n^{-(N-1)} \sim X$, from which it is an
exercise to derive the more general estimate stated in the lemma.
\end{proof}

\begin{proof}[Proof of Theorem~$\ref{theorem:sumlogpp}$]
We set the following useful notation:
\[
  A_f(x) 
  = \prod_{\hidewidth\substack{\L\subseteq\GG_m^N(\CC)\\ \|\L\|\le x\\}\hidewidth} W_f(\L), \quad
  d_f(\L)
  = \sum_{\hidewidth\substack{p~\text{prime}\\ \L\in\RA_f(p)\\}\hidewidth} 
   \frac{\log p}{p}, \quad
  D_f(x) 
  = \sum_{\hidewidth\substack{\L\subseteq\GG_m^N(\CC)\\ \|\L\|\le x\\}\hidewidth} d_f(\L).
\]
We note that if $\L\in\RA_f(p)$ for a lattice with $\|\L\|\le x$, then
$p\mid A_f(x)$, since~$p$ divides the factor~$W_f(\L)$ appearing
in~$A_f(x)$.  We use this observation to estimate
\begin{align*}
  D_f(x) &= \sum_{\substack{\L\subseteq\GG_m^N(\CC)\\ \|\L\|\le x\\}} d_f(\L)
    &&\text{definition of $D_f(x)$,} \\
  &= \sum_{\substack{\L\subseteq\GG_m^N(\CC)\\ \|\L\|\le x\\}} 
         \sum_{\substack{p~\text{prime}\\ \L\in\RA_f(p)\\}} \frac{\log p}{p}
    &&\text{definition of $d_f(\L)$,} \\
  &= \sum_{\substack{p~\text{prime such}\\\text{that $\exists\;\L\in\RA_f(p)$}\\
       \text{with $\|\L\|\le x$}\\}}  \frac{\log p}{p} \\
  &\le \sum_{p\mid A(x)} \frac{\log p}{p}
    &&\text{from above observation,} \\
  &\le \log\log \bigl|A(x)\bigr| + O(1).
\end{align*}
The last inequality is a standard estimate; see for
example~\cite[Section~2]{MR1395936}. 

We define a constant~$C_f'$, depending only on~$f$, by
\[
  C_f' =  \sup_{\substack{(z_1,\ldots,z_N)\in\CC^N\\ |z_1|=\cdots=|z_N|=1\\}}
                \log\bigl|f(z_1,\ldots,z_N)\bigr|.
\]
We use~$C_f'$ to estimate the size of~$A_f(x)$ as follows:
\begin{align}
  \label{eqn:logAfxCfxN1}
  \log \bigl|A_f(x)\bigr| 
  &= \sum_{\substack{\L\subseteq\GG_m^N(\CC)\\ \|\L\|\le x\\}} \log\bigl|W_f(\L)\bigr| 
    &&\text{definition of $A_f(x)$,} \notag \\
  &= \sum_{\substack{\L\subseteq\GG_m^N(\CC)\\ \|\L\|\le x\\}} 
        \sum_{\bfzeta\in\L} \log\bigl|f(\bfzeta)\bigr| 
    &&\text{definition of $W_f(\L)$,} \notag \\
  &\le \sum_{\substack{\L\subseteq\GG_m^N(\CC)\\ \|\L\|\le x\\}} C_f'\|\L\|
    &&\text{definition of $C_f'$,} \notag \\
  &= C_f' \sum_{n\le x} n \nu_N(n)
    &&\text{definition of $\nu_N(n)$,} \notag \\
  &= C_f'  \frac{x^{N+1}}{N+1} \bigl(1+o(1)\bigr)
     &&\text{from Lemma~\ref{lemma:nuformula} with $k=1$,} \notag \\
  &\le C_f'' x^{N+1}
     &&\text{for a new constant.}
\end{align}

We next use a telescoping sum argument (or in fancier terms, Abel
summation), to compute
\begin{align*}
  \sum_{p~\text{prime}}& \frac{\log p}{p}  \sum_{\L\in\RA_f(p)} \frac{1}{\|\L\|^\e}\\
  &= \sum_{\L\subseteq\GG_m^N(\CC)} \frac{1}{\|\L\|^\e} 
         \sum_{\substack{p~\text{prime}\\ \L\in\RA_f(p)\\}} \frac{\log p}{p} \\
  &= \sum_{\L\subseteq\GG_m^N(\CC)} \frac{1}{\|\L\|^\e} \cdot d_f(\L)
    \quad\text{definition of $d_f(\L)$,} \\
  &= \sum_{k=1}^\infty \frac{1}{k^\e} 
      \sum_{\substack{\L\subseteq\GG_m^N(\CC)\\ \|\L\|=k\\}} d_f(\L) \\
  &= \sum_{k=1}^\infty \frac{1}{k^\e} 
     \biggl( \sum_{\substack{\L\subseteq\GG_m^N(\CC)\\ \|\L\|\le k\\}} d_f(\L)
           - \sum_{\substack{\L\subseteq\GG_m^N(\CC)\\ \|\L\|\le k-1\\}} d_f(\L) \biggr) \\
  &= \sum_{k=1}^\infty \frac{1}{k^\e} \Bigl( D_f(k) - D_f(k-1) \Bigr)
    \quad\text{definition of $D_f(x)$,} \\
  &= \biggl(\sum_{k=1}^\infty \frac{1}{k^\e} D_f(k) \biggr)
        - \biggl(\sum_{k=1}^\infty \frac{1}{(k+1)^\e} D_f(k) \biggr) \\
  &\le \sum_{k=1}^\infty \left(\frac{1}{k^\e}-\frac{1}{(k+1)^\e}\right)
        \bigl(\log\log \bigl|A_f(k)\bigr| + O(1)\bigr) \\
    &\le \sum_{k=1}^\infty \left(\frac{1}{k^\e}-\frac{1}{(k+1)^\e}\right)
         \bigl( (N+1)\log(k) + O(1) \bigr) 
     \quad\text{from \eqref{eqn:logAfxCfxN1},} \\
    &\le \sum_{k=1}^\infty 
         \left(\frac{\e}{k^{1+\e}} + O\left(\frac{1}{k^{2+\e}}\right)\right)
         \bigl( (N+1)\log(k) + O(1) \bigr) \\
    &\le (N+1)\e \sum_{k=1}^\infty \frac{\log k}{k^{1+\e}}
             + O\left( \sum_{k=1}^\infty \frac{\e}{k^{1+\e}} \right) 
             + O\left( \sum_{k=1}^\infty \frac{\log k}{k^{2+\e}} \right) \\
    &= (N+1)\e^{-1} + O(1),
\end{align*}
where the $O(1)$ depends on~$f$, but is independent of~$\e$. 
This completes the proof of Theorem~\ref{theorem:sumlogpp}.
\end{proof}

An immediate corollary of Theorem~\ref{theorem:sumlogpp} is an upper
bound for the Dirichlet density of the set of primes such
that~$\RA_f(p)$ contains a ``small'' group. We recall that the
\emph{upper logarithmic Dirichlet density} of a set of primes~$\Pcal$
is the quantity
\[
  \overline{\d}(\Pcal) = \limsup_{s\to1^+} \;(s-1)\sum_{p\in\Pcal}
  \frac{\log p}{p^s}.
\]

\begin{corollary}
\label{corollary:densityRApf}
Let $f\in\ZZ^{(N)}$ be a non-zero Laurent polynomial.  For $\theta>0$,
define
\[
  \Pcal_f(\theta) = \bigl\{p :
    \text{there exists a $\L\in\RA_f(p)$ with $\|\L\|\le p^\theta$} \bigr\}.
\]
Then
\[
  \overline{\d}\bigl(\Pcal_f(\theta)\bigr) \le (N+1)\theta.
\]
\end{corollary}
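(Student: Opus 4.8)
The plan is to deduce the corollary directly from Theorem~\ref{theorem:sumlogpp} by making a judicious choice of the parameter~$\e$. First I would observe that if $p\in\Pcal_f(\theta)$, then by definition there is a group $\L\in\RA_f(p)$ with $\|\L\|\le p^\theta$, hence $\|\L\|^\e\le p^{\theta\e}$ and therefore
\[
  \sum_{\L\in\RA_f(p)}\frac{1}{\|\L\|^\e}\ \ge\ \frac{1}{p^{\theta\e}}.
\]
Multiplying by $\log p/p$ and summing over $p\in\Pcal_f(\theta)$ only, this inequality together with Theorem~\ref{theorem:sumlogpp} yields, for every $\e>0$,
\[
  \sum_{p\in\Pcal_f(\theta)}\frac{\log p}{p^{1+\theta\e}}
  \ \le\ \sum_{p~\text{prime}}\frac{\log p}{p}\sum_{\L\in\RA_f(p)}\frac{1}{\|\L\|^\e}
  \ \le\ (N+1)\e^{-1}+C_f.
\]

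Next I would substitute $s=1+\theta\e$, so that $\e=(s-1)/\theta$ and $s$ ranges over $(1,\infty)$ as $\e$ ranges over $(0,\infty)$. Rewriting the bound above in terms of $s$ gives
\[
  \sum_{p\in\Pcal_f(\theta)}\frac{\log p}{p^{s}}\ \le\ \frac{(N+1)\theta}{s-1}+C_f,
\]
and multiplying through by $s-1$ produces
\[
  (s-1)\sum_{p\in\Pcal_f(\theta)}\frac{\log p}{p^{s}}\ \le\ (N+1)\theta+(s-1)C_f.
\]
Taking $\limsup_{s\to1^+}$ of both sides, the term $(s-1)C_f$ vanishes, and by the definition of the upper logarithmic Dirichlet density recalled just before the corollary, we conclude that $\overline{\d}\bigl(\Pcal_f(\theta)\bigr)\le(N+1)\theta$.

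There is no real obstacle here beyond bookkeeping: all of the analytic content sits in Theorem~\ref{theorem:sumlogpp}, and the one point that genuinely matters is that the constant $C_f$ in that theorem does not depend on~$\e$, which is precisely what makes $(s-1)C_f\to0$ after the rescaling. It is perhaps worth remarking in passing that the estimate is informative only for $\theta<(N+1)^{-1}$, since $\overline{\d}$ of any set of primes is at most~$1$; this is consistent with the fact, noted in the introduction, that $\RA_f(p)$ may be genuinely infinite when $N\ge2$.
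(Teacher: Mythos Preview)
Your proof is correct and follows essentially the same approach as the paper's: both arguments use the defining inequality $\|\L\|\le p^\theta$ for $p\in\Pcal_f(\theta)$ to compare $p^{-s}$ with a term of the inner sum in Theorem~\ref{theorem:sumlogpp}, then let the parameter tend to zero. The only cosmetic difference is that the paper writes $s=1+\e$ and invokes the theorem with parameter $\e/\theta$, whereas you invoke the theorem with parameter $\e$ and then substitute $s=1+\theta\e$; these are the same computation reparametrized.
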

\begin{proof}
We set $s=1+\e$ and compute
\begin{align*}
  \sum_{p\in\Pcal_f(\theta)} \frac{\log p}{p^s}
  &= \sum_{p\in\Pcal_f(\theta)} \frac{\log p}{p}\cdot\frac{1}{p^\e} \\
  &\le \sum_{p\in\Pcal_f(\theta)} \frac{\log p}{p}\cdot
               \min_{\L\in\RA_f(p)} \frac{1}{\|\L\|^{\e/\theta}}
    \quad\text{definition of $\Pcal_f(\theta)$,} \\
  &\le \sum_{p\in\Pcal_f(\theta)} \frac{\log p}{p}\cdot
               \sum_{\L\in\RA_f(p)} \frac{1}{\|\L\|^{\e/\theta}} 
    \quad\text{adding more $\L$'s,} \\
  &\le \sum_{p} \frac{\log p}{p}\cdot
               \sum_{\L\in\RA_f(p)} \frac{1}{\|\L\|^{\e/\theta}} 
    \quad\text{adding more primes,} \\
  &\le \frac{(N+1)\theta}{\e}+O(1)
    \quad\text{from Theorem~\ref{theorem:sumlogpp}.}
\end{align*}
Multiplying by $s-1=\e$ and letting $s\to1^+$ (so $\e\to0^+$) gives
the desired result. 
\end{proof}

\section{Zsigmondy sets of DD-Sequences}
\label{section:zsig}

We recall that the \emph{Zsigmondy set} of the DD-sequence~$W_f$ is
the set
\[
  \Zsig(f) := \bigl\{\text{cyclic $\L$} : 
              \text{$W_f(\L)$ has no primitive prime divisors} \bigr\},
\]
where~$\gp$ is a primitive prime divisor for~$\L$ if~$\L\in\RA_f(\gp)$.
Classical results say that the Zsigmondy set is finite for
1-dimensional sequences such as~$a^n-b^n$, Fibonacci and Lucas
sequences, and elliptic divisibility sequences provided that the
sequence has an appropriate growth property. We conjecture a similar
statement for higher dimensional DD-sequences, but the growth
condition is more subtle. Roughly speaking, we want to exclude
those~$\L$ for which the size of~$W_f(\L)$ is not exponential
in~$\|\L\|$.

We recall from Section~\ref{section:infgrowth} that there is a Mahler
measure~$\Mahler_G(f)$ associated to every algebraic
subgroup~$G\subseteq\GG_m^N$. Further, we say that a
collection~$\Lcal$ of finite subgroups of~$\GG_m^N(\CC)$ converges
to~$G$ if there is a weak convergence of measures $\mu_\L\to\mu_G$
as~$\L$ is chosen in~$\Lcal$ with $\|\L\|\to\infty$. (Here~$\mu_G$ is
normalized Haar measure on $G(\CC)\cap\TT^N$ and $\mu_\L$ is
normalized uniform discrete measure on~$\L$.)

\begin{conjecture}
\label{conjecture:zsigfinite}
Let $f\in\Qbar^{(N)}$ be a non-zero Laurent polynomial, let
$G\subset\GG_m^N$ be an algebraic subgroup, and let $\Lcal$ be a
collection of finite cyclic subgroups of~$\GG_m^N(\CC)$ that converges
to~$G$. Then
\[
  \Mcal_G(f) > 1
  \quad\Longrightarrow\quad
  \Zsig(f)\cap\Lcal~\text{is finite.}
\]
\end{conjecture}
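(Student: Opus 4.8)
The plan is to transport the classical Zsigmondy argument to the poset of finite subgroups, so the proof will be conditional on Conjecture~\ref{conjecture:growth} (hence unconditional in the cases of Theorem~\ref{theorem:growth}, in particular for $N=1$ or when $f$ does not vanish on $\TT(G)$). The starting observation, which combines Proposition~\ref{proposition:RAfpcyclic}(a) with the factorization $W_f(\L)=\prod_{\L'\subseteq\L}V_f(\L')$ of Theorem~\ref{theorem:WfprodVf}(c), is that for cyclic $\L$ a prime $\gp$ is a primitive prime divisor of $W_f(\L)$ precisely when $V_f(\L)\in\gp$ and $\gp$ divides no $W_f(\L')$ with $\L'\subsetneq\L$. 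Hence $W_f(\L)$ has \emph{no} primitive prime divisor exactly when every prime dividing $V_f(\L)$ already divides $W_f(\L_e)$ --- equivalently, divides $V_f(\L_e)$ --- for some proper divisor $e$ of $n=\|\L\|$, where $\L_e\subseteq\L$ denotes the subgroup of order $e$. The goal is therefore to show that for $\L\in\Lcal$ with $\|\L\|$ large, $\log|V_f(\L)|$ strictly exceeds the total contribution to $\log|W_f(\L)|$ of these ``old'' primes; the exceptional $\L$ will then form a finite set, which is the claim.

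For the lower bound I would use M\"obius inversion on the subgroup lattice, $V_f(\L)=\prod_{\L'\subseteq\L}W_f(\L')^{\Mob(\L,\L')}$, together with the growth asymptotic $\frac{1}{\|\L'\|}\log|W_f(\L')|\to\log\Mcal_{G'}(f)$ for the various limit groups $G'$ arising from sub-collections of $\Lcal$. Since $\Mcal_G(f)>1$ by hypothesis, one expects $\log|V_f(\L)|$ to grow like $\phi(\|\L\|)\log\Mcal_G(f)$; what actually has to be proved is the one-sided bound $\log|V_f(\L)|\gg_\e\|\L\|^{1-\e}$, i.e.\ a ``primitive'' form of the Growth Conjecture asserting that the exponential growth of $W_f(\L)$ really concentrates on the top layer $V_f(\L)$. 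For $N=1$ this can be extracted, as in the proof of Theorem~\ref{theorem:growth}(a), from a Baker/Gelfond-type lower bound for $|\bfxi-\alpha|$ with $\bfxi$ a root of unity and $\alpha$ an algebraic root of $f$; in general it is exactly the point at which the argument must remain conditional.

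The upper bound on the ``old'' contribution is what I expect to be the main obstacle. Write $p$ for an imprimitive prime divisor of $V_f(\L)$, i.e.\ one that also divides $V_f(\L_e)$ for some $e\mid n$ with $e<n$. Two arithmetic inputs are needed. First, a \textbf{bounded-ramification} lemma: such a $p$ must divide $n$ or be bounded solely in terms of $f$ --- the DD-sequence analogue of the fact that $\gcd(\F_n,\F_m)>1$ forces $n/m$ to be a prime power. When the coefficients of $f$ are generic this follows cleanly from the irreducibility of the $C_f(\bfxi)$ in Theorem~\ref{theorem:Cfmirred} and the coprimality Lemma~\ref{lemma:VGRVGprimeR}, but for arbitrary $f$ one must analyse directly when two of the linearised factors $C_f(\bfxi_1)$ and $C_f(\bfxi_2)$ can share a prime, and $\RA_f(p)$ may now be infinite. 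Second, a $p$-adic growth estimate --- the ``$p$-adic Growth Property'' of Table~\ref{table:dsprops} --- bounding $\ord_p V_f(\L)$ by $O(\ord_p n+1)$; this should come from a lifting-the-exponent analysis of $\ord_\gp f(\bfzeta)$ as $\bfzeta$ runs over a cyclic group, generalising the one-dimensional case. Granting both, the old contribution is $\sum_{p\mid n}O(\ord_p n+1)\log p+O_f(1)=O(\log\|\L\|)$, which is dominated by the lower bound $\log|V_f(\L)|\gg_\e\|\L\|^{1-\e}$ once $\|\L\|$ is large; this forces $W_f(\L)$ to have a primitive prime divisor for all but finitely many $\L\in\Lcal$, so $\Zsig(f)\cap\Lcal$ is finite. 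The genuine difficulties are thus isolated in the two lemmas above and in the primitive growth lower bound: the rigidity of the cyclotomic case is lost for $N\ge2$, and both the ramification control and the $\gp$-adic growth of $f(\bfzeta)$ have to be built from scratch.
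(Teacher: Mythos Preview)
The statement you are attempting is Conjecture~\ref{conjecture:zsigfinite}, and the paper does \emph{not} prove it: it is explicitly left as an open conjecture. Immediately after stating it, the paper says only that a proof is expected to require (i)~some version of the Growth Conjecture~\ref{conjecture:growth}, (ii)~a reasonable description of the rank-of-apparition sets~$\RA_f(\gp)$, and (iii)~an estimate showing slow $\gp$-adic growth of~$W_f(\L)$ for~$\L$ containing a fixed element of~$\RA_f(\gp)$. There is no further argument.

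Your proposal is therefore not a proof to be checked against the paper's proof, but a strategy sketch for an open problem, and you are candid about this (``the proof will be conditional'', ``the argument must remain conditional'', ``Granting both''). As a strategy it is well aligned with the paper's own expectations: your conditional use of Conjecture~\ref{conjecture:growth} is exactly ingredient~(i); your ``bounded-ramification lemma'' is a sharpened form of ingredient~(ii); and your lifting-the-exponent bound $\ord_p V_f(\L)=O(\ord_p n+1)$ is precisely ingredient~(iii). You go further than the paper by writing down the M\"obius-inverted lower bound for~$\log|V_f(\L)|$ and the $O(\log\|\L\|)$ upper bound for the old-prime contribution, which is the correct shape of the classical Bang--Zsigmondy endgame.

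The points where your sketch remains genuinely incomplete are the ones you flag yourself, and they are real. The ``bounded-ramification'' step cannot be read off from Theorem~\ref{theorem:Cfmirred} or Lemma~\ref{lemma:VGRVGprimeR} once~$f$ is non-generic: those results are stated only for~$f_M$ with independent indeterminate coefficients, and for a specific~$f\in\Qbar^{(N)}$ the sets~$\RA_f(p)$ can be infinite, so the analogue of ``$p\mid n$ or $p$ is bounded'' is not available and would have to be replaced by something weaker that still feeds into the counting. Likewise the $\gp$-adic growth bound has no analogue proved anywhere in the paper for $N\ge2$. So your proposal correctly identifies the architecture and the three missing lemmas, but none of the three is currently a theorem in the paper's setting, and the paper does not claim otherwise.
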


As in the classical cases, we expect that a proof of
Conjecture~\ref{conjecture:zsigfinite} will require some version of
the growth conjecture (Conjecture~\ref{conjecture:growth}), a
reasonable description of the sets of ranks of
apparition~$\RA_f(\gp)$, and an estimate showing slow~$\gp$-adic
growth of~$W_f(\L)$ for~$\L$ containing a fixed element
of~$\RA_f(\gp)$.

\section{DD-Sequences for Highly Symmetric Polynomials}
\label{section:ddseqsympoly}
If a Laurent polynomial has symmetries given by inversions and/or
permutations of its coordinates, then its associated DD-sequence tends
to be powerful, i.e., have many factors that are powers.  In this
section we illustrate this principle for a prototypical highly
symmetric family of polynomials. 

\begin{proposition}
\label{proposition:PTXYWn8}
Let
\[
  P_T(X,Y)=X+X^{-1}+Y+Y^{-1}+T \in \ZZ[T]^{(2)}.
\]
Then the associated DD-sequence of polynomials~$W_n(P_T)\in \ZZ[T]$
factors in~$\ZZ[T]$ as $W_n(P_T)=A_n(T)B_n(T)^8$ with
\[
  \deg B_n(T) = \begin{cases}
        \frac18(n-1)(n-3) & \text{if~$n$ is odd,} \\
        \frac18(n-2)(n-4) & \text{if~$n$ is even.} \\
  \end{cases}
\]
Thus~$W_n(P_T)$, which has degree~$n^2$, is almost an~$8$'th
power.\footnote{One can say even more. If~$n$ is odd, respectively
  even, then $A_n(T)/W_1(P_T)$, respectively $A_n(T)/W_2(P_T)$, is a
  perfect~$4$'th power in~$\ZZ[T]$.}
\end{proposition}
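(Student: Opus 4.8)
The plan is to exploit the large symmetry group of $P_T$. First I would note that for $(\z_1,\z_2)\in\bfmu_n^2$ one has $P_T(\z_1,\z_2)=T+c(\z_1,\z_2)$ with $c(\z_1,\z_2):=\z_1+\z_1^{-1}+\z_2+\z_2^{-1}\in\Zbar$, a degree-one polynomial in $T$ and hence never $0$ in $\overline{\QQ(T)}$. Thus no factors are dropped and $W_n(P_T)=\prod_{(\z_1,\z_2)\in\bfmu_n^2}\bigl(T+c(\z_1,\z_2)\bigr)$, a monic polynomial of degree $n^2$ lying in $\ZZ[T]$ by Proposition~\ref{proposition:Wfdivposet}(a) applied with $R=\ZZ[T]$. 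Let $G$ be the group of order $8$ acting on $\bfmu_n^2$ generated by $(\z_1,\z_2)\mapsto(\z_1^{-1},\z_2)$, by $(\z_1,\z_2)\mapsto(\z_1,\z_2^{-1})$, and by $(\z_1,\z_2)\mapsto(\z_2,\z_1)$. Since $P_T$ is $G$-invariant, $c$ is constant on each $G$-orbit, so a $G$-orbit $O$ with common value $c_O$ contributes exactly the factor $(T+c_O)^{\#O}$ to $W_n(P_T)$.

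Next I would classify the $G$-orbits by size. By orbit--stabilizer these sizes lie in $\{1,2,4,8\}$, and a point $(\z_1,\z_2)$ has non-trivial $G$-stabilizer precisely when it lies on one of the four subsets $L_1=\{\z_1^2=1\}$, $L_2=\{\z_2^2=1\}$, $L_3=\{\z_1=\z_2\}$, $L_4=\{\z_1\z_2=1\}$ of $\bfmu_n^2$: each non-identity element of $G$ fixes a point only if that point lies on one of the $L_i$, and conversely each $L_i$ is fixed pointwise by a suitable involution in $G$. An inclusion--exclusion count of $\#(L_1\cup L_2\cup L_3\cup L_4)$ — using $\#\{\z\in\bfmu_n:\z^2=1\}=\gcd(2,n)$ and simplifications such as ``$\z_1^2=1$ and $\z_2=\z_1^{-1}$ imply $\z_1=\z_2$'' — yields $4n-3$ when $n$ is odd and $6n-8$ when $n$ is even. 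Hence the number of points lying in orbits of the full size $8$ is $n^2-(4n-3)=(n-1)(n-3)$ for $n$ odd and $n^2-(6n-8)=(n-2)(n-4)$ for $n$ even, so there are $\tfrac18(n-1)(n-3)$, respectively $\tfrac18(n-2)(n-4)$, orbits of size $8$.

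Now I would set $B_n(T)=\prod_{\#O=8}(T+c_O)$ and $A_n(T)=\prod_{\#O\le 4}(T+c_O)^{\#O}$, so that $W_n(P_T)=A_n(T)B_n(T)^8$ and $\deg B_n$ equals the count just obtained. It remains to check $A_n,B_n\in\ZZ[T]$. The action of $\Gal(\Qbar/\QQ)$ on $\bfmu_n^2$ commutes with that of $G$, since $G$ is generated by $\QQ$-rational maps, so $\Gal(\Qbar/\QQ)$ permutes the $G$-orbits without changing their sizes; therefore the coefficients of $A_n$ and $B_n$ are Galois-invariant, placing $A_n,B_n\in\QQ[T]$, and both are monic with all roots algebraic integers, hence $A_n,B_n\in\ZZ[T]$. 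This proves the stated factorization together with the degree formula.

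For the footnote I would split further: $A_n(T)=C_n(T)^4\prod_{\#O\le 2}(T+c_O)^{\#O}$, where $C_n(T)=\prod_{\#O=4}(T+c_O)\in\ZZ[T]$ by the same Galois argument. A point has $G$-stabilizer of order $\ge 4$ if and only if it is fixed by the central involution $(\z_1,\z_2)\mapsto(\z_1^{-1},\z_2^{-1})$, i.e.\ iff $\z_1^2=\z_2^2=1$; so the orbits of size $\le 2$ consist exactly of the pairs in $(\bfmu_n\cap\{\pm1\})^2$, whence $\prod_{\#O\le 2}(T+c_O)^{\#O}$ equals $W_1(P_T)=T+4$ when $n$ is odd and $W_2(P_T)=T^2(T^2-16)$ when $n$ is even. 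Thus $A_n(T)/W_1(P_T)$, respectively $A_n(T)/W_2(P_T)$, equals the fourth power $C_n(T)^4$. I expect the main obstacle to be the orbit bookkeeping of the second paragraph — keeping track of the four degenerate loci, their pairwise and triple intersections, and the dependence on the parity of $n$ — but this is routine inclusion--exclusion rather than anything deep.
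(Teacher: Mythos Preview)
Your proof is correct and follows essentially the same approach as the paper: exploit the order-$8$ dihedral symmetry of $P_T$, show that orbits of full size $8$ contribute an $8$th-power factor, count the points with non-trivial stabilizer (your $L_1\cup L_2\cup L_3\cup L_4$ is exactly the paper's set $\bigcup_{\z}\{(\pm1,\z),(\z,\pm1),(\z,\z),(\z,\z^{-1})\}$), and invoke Galois invariance plus integrality to land in $\ZZ[T]$. Your write-up is slightly more thorough in that you explicitly note $P_T(\bfzeta)\ne0$ so no factors are omitted, carry out the inclusion--exclusion in detail, and actually prove the footnote (which the paper leaves to the reader); the observation that every order-$\ge4$ subgroup of $D_4$ contains the central involution $\sigma_1\sigma_2$ is a clean way to handle that part.
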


\begin{remark}
The equation~$P_T(X,Y)=0$, which defines a family of elliptic curves
over~$\QQ(T)$, has been much studied.\footnote{The transformation
  $x=-1/XY$, $y=(Y-X)(1+XY)/2X^2Y^2$, maps it to the Weierstrass
  equation $y^2=x^3+(T^2/4-2)x^2+x$.}  For~$t\in\ZZ$, the Mahler
measure~$\Mahler(P_t)$ is conjecturally related to the value
of~$L'(E_t,0)$, and a number of deep relations between
various~$\Mahler(P_t)$ values have been proven, for
example~$\Mahler(P_8)=\Mahler(P_2)^4$
and~$\Mahler(P_5)=\Mahler(P_1)^6$; see~\cite{MR2652904,MR2336636}. It
is thus natural to ask whether~$W_n(P_8)$ and~$W_n(P_2)^4$,
or~$W_n(P_5)$ and~$W_n(P_1)^6$, are similarly related. This was the
original, albeit as yet unsuccessful, motivation for studying the
DD-sequences associated to the family~$P_T(X,Y)$.  However, we can
prove that~$W_n(P_{2T+4})$ and~$W_n(P_T)$ have a common factor
in~$\ZZ[T]$ of degree roughly~$2n$, so in particular~$W_n(P_8)$
and~$W_n(P_2)$ tend to have a fairly large common factor.
\end{remark}

\begin{proposition}
\label{proposition:gcdP2T4PT}
The DD-sequence of polynomials~$W_n(P_T)\in\ZZ[T]$ associated to the
Laurent polynomial~$P_T(X,Y)$ satisfies
\[
  \deg \gcd\nolimits_{\ZZ[T]} \bigl( W_n(P_{2T+4}) , W_n(P_T) \bigr) \ge 
    \begin{cases}
       2n-1 &\text{if $n$ is odd,}\\
       2n-2 &\text{if $n$ is even.}\\
   \end{cases}
\]
\end{proposition}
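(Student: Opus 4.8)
The heart of the matter is the polynomial identity
\[
  P_{2T+4}(X,X) = 2X + 2X^{-1} + 2T + 4 = 2\bigl(T + 2 + X + X^{-1}\bigr) = 2\,P_T(X,1),
\]
and its companion $P_{2T+4}(X,X^{-1}) = 2\,P_T(X,1)$, together with the invariance of $P_T$ under $X\mapsto X^{-1}$, under $Y\mapsto Y^{-1}$, and under $X\leftrightarrow Y$. First I would fix $\zeta\in\bfmu_n$ with $\zeta\ne\pm1$. The four pairs $(\zeta,\zeta),(\zeta,\zeta^{-1}),(\zeta^{-1},\zeta),(\zeta^{-1},\zeta^{-1})\in\bfmu_n^2$ are distinct (because $\zeta\ne\zeta^{-1}$), and by the identity above each of them contributes the same factor $2\,P_T(\zeta,1) = 2(T+2+\zeta+\zeta^{-1})$ to the product defining $W_n(P_{2T+4})$; hence the linear polynomial $P_T(\zeta,1)$ divides $W_n(P_{2T+4})$ to the fourth power. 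Symmetrically, the four distinct pairs $(\zeta,1),(\zeta^{-1},1),(1,\zeta),(1,\zeta^{-1})$ each contribute the factor $P_T(\zeta,1)$ to $W_n(P_T)$, so $P_T(\zeta,1)^4$ divides $W_n(P_T)$ as well.

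Next I would assemble these local statements into a single polynomial over $\ZZ$. Since $P_T(\zeta,1) = T+2+\zeta+\zeta^{-1}$ depends on $\zeta$ only through $\zeta+\zeta^{-1}$, distinct pairs $\{\zeta,\zeta^{-1}\}$ yield coprime linear factors, and the set of such pairs with $\zeta\in\bfmu_n$, $\zeta\ne\pm1$, is a union of $\Gal(\Qbar/\QQ)$-orbits. Therefore
\[
  Q_n(T) := \prod_{\substack{\{\zeta,\zeta^{-1}\}\subset\bfmu_n\\ \zeta\ne\pm1}} \bigl(T + 2 + \zeta + \zeta^{-1}\bigr)
\]
is a monic polynomial in $\ZZ[T]$ of degree $\lfloor(n-1)/2\rfloor$, and the previous paragraph shows that $Q_n(T)^4$ divides both $W_n(P_T)$ and $W_n(P_{2T+4})$ in $\ZZ[T]$. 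It then remains to treat the excluded points $\zeta=\pm1$: the pair $(1,1)$ gives $P_T(1,1)=T+4$ in $W_n(P_T)$ and $P_{2T+4}(1,1)=2(T+4)$ in $W_n(P_{2T+4})$, so $T+4$ divides both; and when $n$ is even, the pair $(1,-1)$ gives $P_T(1,-1)=T$ while $P_{2T+4}(-1,-1)=2T$, so $T$ also divides both. As $T+4$, $T$, and $Q_n$ are pairwise coprime, $(T+4)\,Q_n(T)^4$ (for $n$ odd), resp.\ $(T+4)\,T\,Q_n(T)^4$ (for $n$ even), is a common divisor of $W_n(P_{2T+4})$ and $W_n(P_T)$ in $\ZZ[T]$; its degree is $1 + 4\lfloor(n-1)/2\rfloor = 2n-1$ when $n$ is odd and $2 + 4(n/2-1) = 2n-2$ when $n$ is even, which is the asserted lower bound on $\deg\gcd\nolimits_{\ZZ[T]}\bigl(W_n(P_{2T+4}),W_n(P_T)\bigr)$.

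There is no deep obstacle once the identity $P_{2T+4}(X,X)=2P_T(X,1)$ has been noticed; the care is entirely in bookkeeping: (i) verifying that the quadruples of pairs above are genuinely distinct, which is exactly why $\zeta=\pm1$ must be excluded from $Q_n$ and handled separately; (ii) the parity analysis needed to land on the precise bounds $2n-1$ and $2n-2$; and (iii) the remark that the product defining $Q_n$ runs over full Galois orbits, so that $Q_n\in\ZZ[T]$ and not merely $Q_n\in\Qbar[T]$. Equivalently, one could dispense with $Q_n$ and argue directly that each root $-2-2\cos(2\pi k/n)$ occurs with multiplicity at least $4$ in both $W_n(P_T)$ and $W_n(P_{2T+4})$ (and at least $1$ for the boundary values $k=0$ and, when $n$ is even, $k=n/2$), then sum these contributions; this is the same computation phrased differently.
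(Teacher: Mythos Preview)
Your proof is correct and follows essentially the same route as the paper: both rest on the identity $P_{2T+4}(Z,Z)=2P_T(1,Z)$ (and its symmetric variants), and both exhibit the same common factor of degree $2n-1$ or $2n-2$ by matching diagonal/anti\-diagonal evaluation points for $P_{2T+4}$ with axis evaluation points for $P_T$. The only difference is organizational: you group by unordered pairs $\{\zeta,\zeta^{-1}\}$ and package the result via the auxiliary polynomial $Q_n(T)^4$, whereas the paper runs the product over individual $\zeta\in\bfmu_n\setminus\{1\}$ (resp.\ $\setminus\{\pm1\}$) and writes the common factor directly as $P_T(1,1)\prod_{1\ne\zeta}P_T(1,\zeta)P_T(\zeta,1)$; unwinding either presentation yields the same product.
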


\begin{proof}[Proof of Proposition~$\ref{proposition:PTXYWn8}$]
The maps
\[
  (X,Y)\longmapsto(Y,X)
  \quad\text{and}\quad
  (X,Y)\longmapsto(Y^{-1},X)
\]
generate a group of automorphisms of the 
ring $R^{(2)}=R[X^{\pm1},Y^{\pm1}]$ that is isomorphic to the dihedral
group~$D_4$. Further, the polynomial~$P_T(X,Y)\in\ZZ[T]^{(2)}$ is
fixed by~$D_4$. These maps also induce automorphisms of~$\bfmu_n^2$.
For each~$\bfzeta\in\bfmu_n^2$, we write~$D_4\cdot\bfzeta$ for the
orbit. We are particularly interested in those~$\bfzeta$ whose orbit is
maximal, and we write this set of~$\bfzeta$ as a disjoint union of
orbits, say
\begin{equation}
  \label{eqn:znumDrz8}
  \{\bfzeta\in\bfmu_n^2 : \#(D_4\cdot\bfzeta)=8\}
  = (D_4\cdot\bfzeta_1)\cup (D_4\cdot\bfzeta_2) \cup\cdots\cup
          (D_4\cdot\bfzeta_{k(n)}),
\end{equation}
where later we will give the value of~$k(n)$. We compute
\begin{align*}
  \prod_{\substack{\bfzeta\in\bfmu_n^2\\ \#(D_4\cdot\bfzeta)=8\\}} P_T(\bfzeta)
  &= \prod_{\s\in D_4} \left(\prod_{i=1}^{k(n)} P_T\bigl(\s(\bfzeta_i)\right) 
    \quad\text{from \eqref{eqn:znumDrz8},} \\
  &= \left(\prod_{i=1}^{k(n)} P_T(\bfzeta_i)\right)^8
    \quad\text{since $P_T(X,Y)$ is $D_4$-invariant.}
\end{align*}
On the other hand, the action of~$D_4$ on~$\bfmu_n^2$ commutes with
the action of~$\Gal(\Qbar/\QQ)$, so the set of~$\bfzeta$
satisfying~$\#(D_4\cdot\bfzeta)=8$ is Galois invariant, and hence (since
$P_T(X,Y)$ is $D_4$-invariant), we see that the product
$\prod_{i=1}^{k(n)} P_T(\bfzeta_i)$ is~$\Gal(\Qbar/\QQ)$-invariant. It
is thus in~$\QQ[T]$, and its coefficients are clearly integral
over~$\ZZ$, so it is in~$\ZZ[T]$. This proves that~$W_n(T)$ is
divisible by~$B(T)^8$ for a polynomial~$B(T)\in\ZZ[T]$ of
degree~$k(n)$. It remains to compute~$k(n)$.

Checking the effect of the~$8$ elements of~$D_4$ on~$\bfmu_n^2$, we
find that~$\bfzeta\in\bfmu_n^2$ has a non-trivial stabilizer if and
only if
\[
  \bfzeta\in\bigcup_{\z\in\bfmu_n} 
  \bigl\{(\pm1,\z), (\z,\pm1), (\z,\z), (\z,\z^{-1})\bigr\}.
\]
When~$n$ is odd, this set is the disjoint union of~$(1,1)$ and the
sets
\[
  \bigl\{(1,\z), (\z,1), (\z,\z), (\z,\z^{-1})\bigr\}
\]
with~$\z\in\bfmu_n\setminus1$, so there are~$1+4(n-1)=4n-3$ points in
the set. This gives $k(n)=n^2-4n+3$. When~$n$ is even, a similar
computation, which we leave to the reader, leads to the formula
$k(n)=n^2-6n+8$.
\end{proof}

\begin{proof}[Proof of Proposition~$\ref{proposition:gcdP2T4PT}$]
The key fact is the following identity in the Laurent
ring~$\ZZ[Z^{\pm1}]$:
\[
  P_{2T+4}(Z,Z)
  = 2(Z+Z^{-1})+2T+ 4
  = 2(Z+Z^{-1}+T+2)
  = 2P_T(1,Z).
\]
Further, by exploiting the symmetry of~$P_T$, we obtain
\[
  P_{2T+4}(Z,Z)=P_{2T+4}(Z^{-1},Z)=2P_T(1,Z)=2P_T(Z,1).
\]
Suppose first that~$n$ is odd. Then~$W_n(P_{2T+4})$ has a factor of
the form
\begin{align*}
  P_{2T+4}(1,1)\prod_{1\ne\z\in\bfmu_n} & P_{2T+4}(\z,\z)P_{2T+4}(\z^{-1},\z)\\
  &= 2P_T(1,1)\prod_{1\ne\z\in\bfmu_n} 2P_T(1,\z)2P_T(\z,1)\\
  &= 2^{2n-1}P_T(1,1)\prod_{1\ne\z\in\bfmu_n} P_T(1,\z)P_T(\z,1).
\end{align*}
Other than the~$2^{2n-1}$, this last quantity is also a factor
of~$W_n(P_T)$. Hence $W_n(P_{2T+4})$ and~$W_n(P_T)$ have a common
factor in~$\ZZ[T]$ of degree~$2n-1$.
\par
We obtain a similar result if~$n$ is even, but now we need to keep track
of duplicated factors when~$\z=\pm1$. Thus~$W_n(P_{2T+4})$ has a factor
of the form
\begin{multline*}
  P_{2T+4}(1,1)P_{2T+4}(-1,-1)
  \prod_{\pm1\ne\z\in\bfmu_n} P_{2T+4}(\z,\z)P_{2T+4}(\z^{-1},\z) \\
  =
  2^{2n-2} P_T(1,1)P_T(1,-1)
  \prod_{\pm 1\ne\z\in\bfmu_n} P_T(1,\z)P_T(\z,1),
\end{multline*}
and everything except the~$2^{2n-2}$ is a factor of~$W_n(P_T)$. 
Thus~$W_n(P_{2T+4})$ and~$W_n(P_T)$ have a common
factor in~$\ZZ[T]$ of degree~$2n-2$.
\end{proof}

\section{Further Questions}
\label{section:questions}

In this section we suggest directions for further research on higher
dimensional DD-sequences. For ease of exposition, we fix a non-zero
Laurent polynomial~$f\in\ZZ^{(N)}$ and consider the
DD-sequence~$\Wcal_f$ associated to~$f$.

\begin{question}
\textup{(\textbf{Zsigmondy})} Is the intersection of the Zsigmondy set
of~$f$ with a set of cyclic subgroups converging to a group~$G$ for
which $\Mahler_G(f)>1$ finite?  See
Conjecture~\ref{conjecture:zsigfinite} in Section~\ref{section:zsig}
for details.
\end{question}

\begin{question}
\textup{(\textbf{Powers and Powerful Numbers})}
\begin{parts}
\Part{(a)}
For a fixed $k\ge2$, when can $\Wcal_f$ contain infinitely many~$k$'th
powers?
\Part{(b)}
When can $\Wcal_f$ contain infinitely many perfect powers?
\Part{(c)}
When can $\Wcal_f$ contain infinitely many powerful numbers?
\end{parts}
\noindent
(The referee has pointed out that~(a) is related to results
of Dvornicich and Zannier~\cite{MR2350852}, and that since Siegel's
theorem can be used for $N=1$, it is possible that Vojta's conjecture
might give some light in higher dimension.)
\end{question}

\begin{question}
\label{question:GMM}
\textup{(\textbf{Growth and Mahler Measure})}
Is it true that
\[
  \lim_{n\to\infty} \frac{1}{n^N}\log\bigl|W_f(n)\bigr| = \log\Mahler(f)?
\]
See Conjecture~\ref{conjecture:growth} in
Section~\ref{section:infgrowth} for details and a more general
version.
\end{question}

\begin{question}
\textup{(\textbf{Order $\boldsymbol\ell$ Ranks of Apparition})}
Let
\[
  \RA_f(p,\ell) := \{\L\in\RA_f(p) : \|\L\|=\ell \}
\]
denote the ranks of apparition for~$p$ of order~$\ell$. Assume that
$N\ge2$ and $M(f)\ge2$.
\begin{parts}
\Part{(a)}
Is it true that for all but finitely many primes~$p$,  the set
\[
  \bigl\{\ell : \RA_f(p,\ell)\ne\emptyset\bigr\}~\text{is infinite?}
\]
\Part{(b)}
Fix a prime~$p$.
Does there exist an~$\e>0$ such that the set
\[
  \bigl\{\ell : \#\RA_f(p,\ell)\ge (1-\e)\#\nu_N(\ell)\bigr\}~\text{has
    density $0$?}
\]
\Part{(c)}
Fix a prime~$p$.  Might it even be true that for all $\e>0$, the set
\[
  \bigl\{\ell : \#\RA_f(p,\ell)\ge \e\#\nu_N(\ell)\bigr\}~\text{has
    density $0$?}
\]
\end{parts}
\end{question}

\begin{question}
\label{question:RAplnumM}
\textup{(\textbf{Ranks of Apparition for Varying $\boldsymbol f$})}
Fix a prime~$\ell$ and a finite set $M\subset\ZZ^N$ of indices
with~$\bfzero\in M$.  Is there a finite set of primes $\Pcal_{\ell,M}$
such for all~$f\in\ZZ^{(N)}$ with shape $M(f)=M$, we have
\[
  \bigl\{ p : \#\RA_f(p,\ell)\ge \#M \bigr\} \subseteq \Pcal_{\ell,M}
  \cup \bigl\{p : M(\tilde f\bmod p)\ne M(f)\bigr\} ?
\]
N.B. The essential content of this question is that~$\Pcal_{\ell,M}$
depends only on the set~$M$, and not on the specific polynomial~$f$.
\end{question}

\begin{remark}
We are able to answer Question~\ref{question:RAplnumM} affirmatively
for~$M=\bigl\{(1,0),(0,1),(0,0)\bigr\}$, i.e., for polynomials of the
form~$f(X,Y)=AX+BY+C$. We omit the rather lengthy case-by-case proof.
\end{remark}

\begin{question}
\textup{(\textbf{Recursion})} Classical divisibility sequences satisfy
recursion formulas, which may be linear (e.g., Fibonacci) or
non-linear (e.g.,~EDS). For 1-dimensional DD-sequences, it is not hard
to prove that~$W_f(n)$ satisfies a linear recursion of order at
most~$2^{\deg(f)}$. If the DD-sequence~$\Wcal_f$ has \emph{true
  dimension\footnote{Roughly, this means that no change of variables
    expresses~$f$ as a monomial times a Laurent polynomial in fewer
    variables. The referee has suggested that the true dimension is
    linked to the stabilizer of the divisor of~$f$ in~$\GG_m^N$, and
    in particular, if this stabilizer is finite, then the true
    dimension is maximal.} $N\ge2$}, is it possible for~$\Wcal_f$ to
satisfy a finite order linear recurrence or an EDS-like non-linear
recurrence?  This could apply to either the partial sequence~$W_f(n)$,
or to the full sequences~$W_f(\L)$, where for the latter one would
first need to formulate a suitable definition of finite order linear
recurrence.  We note that if the growth estimate in
Question~\ref{question:GMM} is valid, then for $N\ge2$ we cannot
have~$W_f(n)=L(n)$ for a linear recurrence~$L$,
since~$\log\bigl|L(n)\bigr|\asymp n$. However, we might ask if it is
possible to have (say) $W_f(n)=L(n^N)$. More generally, how closely
can one approximate the sequence~$W_f(n)$ using a subsequence of a
linear recursion of the form~$L(n^N)$?
\end{question}

\begin{question}
\textup{(\textbf{Signs and Characters})} 
The divisibility property of a DD-sequence is a property of the ideals
generated by the various~$W_f(\L)$, but the sign of~$W_f(\L)$ is also
of interest. More generally, one can look at character values.
\begin{parts}
\Part{(a)}
What can one say about the distribution of the sequence of signs
$\bigl\{\operatorname{sign}W_f(n)\bigr\}$?  Ditto for
$\bigl\{\operatorname{sign}W_f(\L)\bigr\}$?  (See~\cite{MR2226354} for
the analogous question for~EDS.)
\Part{(b)}
Fix a modulus~$q$.  What can one say about the distribution of the
mod~$q$ reduction $\bigl\{W_f(n)\bmod q)\bigr\}$?  Ditto
for~$\bigl\{W_f(\L)\bmod q)\bigr\}$?
\Part{(c)}
More generally, let~$\chi:(\ZZ/q\ZZ)^*\to\CC^*$ be a Dirichlet
character. What can one say about the distribution
of~$\bigl\{\chi\bigl(W_f(n)\bigr)\bigr\}$?  Ditto
for $\bigl\{\chi\bigl(W_f(\L)\bigr)\bigr\}$?
\end{parts}
\end{question}

\section{DD-Sequences for Other Groups}
\label{section:ddseqgengp}
We briefly indicate how the notion of DD-sequence naturally
generalizes to arbitrary commutative algebraic groups.  More
precisely, let~$\Gcal/R$ be a group scheme over~$R$, let~$\Ocal$
denote the image of the zero section, and let~$D$ be an effective
$R$-divisor on~$\Gcal$. Then a preliminary definition of the associated
DD-sequence~$W_D$ is the sequence
\[
  W_D(n) = (n_*D)\cdot \Ocal,
\]
where~$n:\Gcal\to\Gcal$ is the $n$'th power map, the intersection is
arithmetic intersection on~$\Gcal$, and the resulting
intersection~$W_D(n)$ is naturally identified with an ideal of~$R$
via the map~$\pi_*$ coming from~$\pi:\Gcal\to\Spec(R)$. More generally,
analogously to what we have done for~$\Gcal=\GG_n^N$, we can define
\[
  W_D : \{\text{$R$-isogenies $\f:\Gcal\to\Gcal'$}\}
  \longrightarrow \text{(Ideals of $R$)}
\]
by setting
\begin{equation}
  \label{eqn:genWdf}
  W_D(\f) = (\f_*D)\cdot\Ocal'.
\end{equation}
(Here~$\Gcal'$ may be any $R$-group scheme that admits a finite
$R$-homo\-mor\-phism from~$\Gcal$, and~$\Ocal'$ is the image of the
identity section of~$\Gcal'$.)

For example, if~$\Gcal$ is an elliptic curve~$E$ over~$R$ and~$D=(P)$,
then~$W_D(n)$ is the classical elliptic divisibility sequence
associated to $(E,P)$, and if~$E$ has complex multiplication, then the
more general sequence $W_D(\f)$ is a reformulation of the CM EDS
studied by Streng~\cite{MR2377368}.

We conjecture that generalized DD-sequences exhibit the fast
growth property if their associated Mahler measures are greater than~$1$.
We remark that if instead of the divisor~$D$, we instead used a
point~$P$, then the sequences~$W_P(n)=(n_*P)\cdot\Ocal$ are also quite
interesting (an example being $\gcd(a^n-1,b^n-1)$), but they do not
appear to satisfy the growth property. Similarly, it is not unnatural
to consider sequences of the form $W_{P,D}(n)=(n_*P)\cdot D$. These
sequences probably do have the growth property, but unless~$D$ is of a
very special form, they will not be divisibility sequences. These two
observations may help to justify our use of~\eqref{eqn:genWdf} to
define higher dimensional DD-sequences.

\subsection*{Acknowldegements}
This research was partially supported by Simons Collaboration
  Grant \#241309. 
The author
would like to thank
Dan Bump,
Vesselin Dimitrov,
Sol Friedberg, 
Jeff Hoffstein,
Matilde Lal{\'{\i}}n,
Douglas Lind, 
Chris Smyth,
Cam Stewart,
Andreas Thom,
and
Felipe Voloch
for their helpful advice
(some of which appeared in the answers to the MathOverflow
questions~\cite{MathOverflow98176,MathOverflow178979,MathOverflow46068}).
The author would also like to thank Katherine Stange for her extensive
comments and corrections to an initial draft of this paper, and the
referee for his/her many helpful suggestions and corrections.



\def\cprime{$'$} \def\cprime{$'$}

\end{document}